\documentclass[a4paper,11pt,french,francais,twoside,fleqno]{amsart}

\usepackage[T1]{fontenc}
\usepackage[latin1]{inputenc}
\usepackage{pifont}
\usepackage{euscript}
\usepackage{mathrsfs}
\usepackage{lmodern}
\rmfamily
\DeclareFontShape{T1}{lmr}{b}{sc}{<->ssub*cmr/bx/sc}{}
\DeclareFontShape{T1}{lmr}{bx}{sc}{<->ssub*cmr/bx/sc}{}

\DeclareMathAlphabet{\mathpzc}{OT1}{pzc}{m}{it}

\usepackage[textwidth=14cm,hcentering]{geometry}
\DeclareRobustCommand{\SkipTocEntry}[4]{}
\usepackage{multicol}
\usepackage{enumerate}
\usepackage{calc}

\usepackage[all]{xy}
\usepackage{amsthm,amssymb,amsmath}
\usepackage[fixamsmath]{mathtools}

\usepackage[frenchb]{babel}
\usepackage{xspace}
\usepackage{enumitem} 

\newcommand{\cal}{\mathcal}


\newcommand{\GroT}{\widehat{GT}}

\newcommand{\Gq}{\Gal(\overline{\QQ}/\QQ)}
\newcommand{\mcg}{\Gamma_{g,[m]}}

\newcommand{\NN}{{\mathbb N}}
\newcommand{\ZZ}{{\mathbb Z}}
\newcommand{\QQ}{{\mathbb Q}}

\newcommand{\CC}{{\mathbb C}}

\renewcommand{\O}{{\cal O}}

\newcommand{\Y}{{\cal Y}}
\newcommand{\M}{{\cal M}}



\newcommand{\GG}{\ensuremath{\mathbb G}}
\newcommand{\mmu}{\ensuremath{\boldsymbol \mu}}

\newcommand{\Spec}{{\operatorname{Spec}\kern 1pt}}
\newcommand{\Spf}{{\operatorname{Spf}\kern 1pt}}

\newcommand{\Pic}{{\mathrm{Pic}}}

\renewcommand{\mod}{\mathrm{\,mod\,}}

\def\buildrel#1\over#2{\mathrel{\mathop{\kern 0pt#2}\limits^#1}}

\renewcommand{\H}{{\mathrm{H}}}
\newcommand{\R}{{\mathrm{R}}}
\renewcommand{\Im}{\operatorname{Im}\kern 1pt}
\newcommand{\coker}{\operatorname{coker}\kern 1pt}

\newcommand{\Gal}{{\mathrm{Gal}}}
\newcommand{\card}{{\mathrm{card}}}

\renewcommand{\epsilon}{\varepsilon}
\newcommand{\Aut}{{\mathrm{Aut}}}

\newcommand{\Ar}{{\mathbf{Ar}\kern 0.5pt}}

\newcommand{\supp}{{\mathrm{supp}}}
\renewcommand{\div}{{\mathrm{div}}}

\newcommand{\cd}{\operatorname{cd}\kern 1pt}

\renewcommand{\tilde}{\widetilde}

\newcommand{\X}{{\mathcal X}}

\newcommand{\U}{{\mathcal U}}






\newcommand{\et}{\text{\rm ét}}



\newcommand{\cart}{\ar@{}[dr] |{\square}}
\newcommand{\comm}{\ar@{}[dr] |{\circlearrowleft}}
\newcommand{\incl}[1][r]{\ar@<-0.2pc>@{^(-}[#1] \ar@<+0.2pc>@{-}[#1]}


\newcommand{\isomto}{\overset{\sim}{\rightarrow}}

\numberwithin{equation}{section}



\theoremstyle{plain}
\newtheorem{theo}{Th\'eor\`eme}[section]
\newtheorem{defi}[theo]{D\'efinition}
\newtheorem{cor}[theo]{Corollaire}
\newtheorem{prop}[theo]{Proposition}

\newtheorem{lem}[theo]{Lemme}

\theoremstyle{remark}
\newtheorem{rem}[theo]{Remarque}
\newtheorem{exem}[theo]{Exemple}
\makeatletter
\let\@altabstract\@empty
\let\@alttitle\@empty
\newcommand*{\alttitle}[1]{\gdef\@alttitle{#1}}%
\newcommand*{\altkeywords}[1]{\gdef\@altkeywords{#1}}%
\newbox\altabstractbox
\newenvironment{altabstract}{%
	\ifx\maketitle\relax
	\ClassWarning{\@classname}{Altabstract should precede
		\protect\maketitle\space in AMS document classes; reported}%
	\fi
	\global\setbox\altabstractbox=\vtop \bgroup
	\normalfont\Small
	\list{}{\labelwidth\z@
		\leftmargin3pc \rightmargin\leftmargin
		\listparindent\normalparindent \itemindent\z@
		\parsep\z@ \@plus\p@
		
	}%
	\item[\hskip\labelsep\scshape\altabstractname.]%
}{%
\endlist\egroup
\ifx\@setaltabstract\relax \@setaltabstracta \fi
}
\def\@setaltabstract{\@setaltabstracta \global\let\@setaltabstract\relax}
\def\@setaltabstracta{%
	\ifx\@empty\@alttitle\else
	\begin{center}%
		\large\slshape  \@alttitle  
	\end{center}%
	\fi
	\ifvoid\altabstractbox
	\else
	\box\altabstractbox
	\prevdepth\z@ 
	\fi
}
\def\@setabstract{\@setabstracta \global\let\@setabstract\relax
	\@setaltabstract}

\newcommand{\altabstractname}{Abstract}
\makeatother

\title[Lieux spéciaux de $\M_{g,[m]}$, actions de Galois absolu]{Composantes irréductibles de lieux spéciaux d'espaces de modules de courbes, action galoisienne en genre quelconque}

\author[B.~Collas et S.~Maugeais]{Benjamin Collas et Sylvain Maugeais}

\address{Institut de Mathématiques de Jussieu\\
	Université Pierre et Marie Curie - Paris 6\\
	4, place Jussieu \\
	75 254 Paris Cedex 5 (France)}
\email{collas@math.cnrs.fr}

\thanks{B. Collas remercie le \emph{laboratoire Manceau de Mathématiques} de l'université du Maine pour son accueil qui a permis cette collaboration, ainsi que M. S. Weiss pour son financement via sa bourse professorale Humboldt. Les auteurs remercient P.~Lochak et M.~Vaquié pour les nombreuses discussions à l'origine de ce travail, P.~Dèbes pour ses remarques, ainsi que le rapporteur dont les commentaires ont améliorés la précision et la clareté du texte.}

\address{Universit\'e du Maine\\
	Laboratoire manceau de Mathématiques\\
	Av. Olivier Messiaen, BP 535 \\
	72017 Le Mans Cedex (France)}
\email{sylvain.maugeais@univ-lemans.fr}

\altkeywords{algebraic fundamental group, stack inertia, special loci, good groups}

\keywords{groupe fondamental algébrique, inertie champêtre, lieu spécial, groupes bons}

\subjclass{11R32, 
	14H10, 
	14H30, 
	14H45.
}

\usepackage[a4paper,colorlinks=true, pdfstartview=FitV, linkcolor=blue, citecolor=blue, urlcolor=blue]{hyperref}

\hypersetup{  
	pdfinfo={  
		Title={Lieux spéciaux d'espaces de modules de courbes marquées, actions de Galois absolu},  
		Author={Benjamin Collas, Sylvain Maugeais},
		Subject={Dans cet article, nous caractérisons l'action du groupe de Galois absolu sur les groupes d'inertie champêtre géométriques cycliques et sans factorisation étale du groupe fondamental géométrique des espaces de modules de courbes marquées. Nous établissons par ailleurs la même action sur les éléments de torsion profinis d'ordre premier en genre 2},  
		Keywords={groupe fondamental algébrique, inertie champêtre, lieu spécial, groupes bons},  
	}  
}

\begin{document}
	
	\begin{abstract}
		Dans cet article, nous caractérisons l'action du groupe de Galois absolu sur les groupes d'inertie champêtre géométriques cycliques et sans factorisation étale du groupe fondamental géométrique des espaces de modules de courbes marquées. Nous établissons par ailleurs la même action sur les éléments de torsion profinis d'ordre premier en genre $2$.
	\end{abstract}
	
	\begin{altabstract}
		In this paper we characterise the action of the absolute Galois group on the geometric finite cyclic groups without étale factorization of stack inertia of the profinite geometric fundamental group of moduli spaces of marked curves. As a complementary result, we give the same action on prime order profinite elements in genus $2$.
	\end{altabstract}
	
	\maketitle
	
	\section{Introduction}
	Nous suivons dans cet article l'approche générale d'A.~Grothendieck d'\emph{Esquisse d'un programme} \cite{GRO97} qui consiste à étudier le groupe de Galois absolu $\Gq$ à travers sa représentation géométrique dans des groupes fondamentaux d'espaces de modules de courbes marquées et non-ordonnées $\M_{g,[m]}$ :
	\begin{equation*}
		\Gq\rightarrow Out(\pi_1^{geom}(\mathcal{M}_{g,[m]})).
	\end{equation*}
	Les espaces de modules $\mathcal{M}_{g,[m]}$ étant munis de leur structure de $\mathbb Q$-champ algébrique de Deligne-Mumford \cite{DEL69}, cette représentation est établie par T.~Oda \cite{ODA}. Le groupe fondamental de $\mathcal{M}_{g,[m]}$ contient par ailleurs certains sous-groupes définis par les structures \emph{inertielle à l'infini} et \emph{inertielle champêtre} de cet espace.
	
	\bigskip
	
	Nos résultats concernent l'action de $\Gq$ sur l'inertie champêtre géométrique, i.e. \emph{finie}, et sans factorisation étale du groupe profini $\pi_1^{geom}(\M_{g,[m]})$ pour tout genre $g\geqslant 0$ et pour un nombre $m\geqslant 0$ quelconque de points marqués. Nous donnons par ailleurs l'action galoisienne sur un certain type de torsion \emph{profinie d'ordre premier} en genre $g=2$. 
	
	La difficulté principale consiste à établir l'invariance des classes de conjugaison d'inertie sous l'action galoisienne. Dans le cas de l'inertie champêtre, nous résolvons ce problème par la caractérisation des composantes irréductibles du champ des espaces de modules des revêtements cycliques.
	
	Dans le cas du genre $2$, nous réduisons la torsion profinie à la torsion finie et le résultat découle du résultat précédent.

	\subsection{Inerties des espaces de modules de courbes, action galoisienne}
	Considérons la compactification de Deligne-Mumford $\overline{\mathcal{M}}_{g,m}$ de l'espace \emph{ordonné} $\mathcal{M}_{g,m}$ ainsi que son diviseur à l'infini $D_{\infty}=\overline\M_{g,m}\setminus \M_{g,m}$. À une composante irréductible $D$ de $D_{\infty}$
	est associé un groupe d'inertie $I_D\subset \pi_1^{geom}(\M_{g,m})$, cyclique suivant la théorie du groupe fondamental modéré \cite{GRO71}.
	
	Dans un contexte de théorie généralisée de Grothendieck-Murre H.~Nakamura a alors établi que \emph{l'action du groupe de Galois absolu $\Gq$ sur \emph{certains} groupes d'inertie à l'infini des espaces de modules $\M_{g,m}$ est donnée par $\chi(\sigma)$-conjugaison}, c'est-à-dire conjugaison d'un générateur d'inertie et élévation à la puissance cyclotomique $\chi(\sigma)$ -- voir \cite{NAKA94} pour le cas du genre zéro et \cite{NAKA97,NAKA99} pour le cas général. Ce résultat a été étendu à l'intégralité de ces groupes d'inertie grâce à la théorie de Grothendieck-Teichmüller dans \cite{Nakamura2000}. 
	
	Dans le cas des espaces de modules $\M_{0,[m]}$ non-ordonnés de genre zéro, la théorie de Grothendieck-Teichmüller permet de plus d'établir un résultat similaire à partir d'expressions en terme de tresses d'Artin des générateurs d'inertie -- voir introduction de \cite{Colg0}. 
	
	\bigskip
	
	D'autre part, à la structure de champ des espaces $\mathcal{M}_{g,[m]}$ sont associés des  \emph{groupes d'inertie champêtre géométriques} $I_x=\Aut(x)$ composés des automorphismes finis des classes d'isomorphisme d'objet $x\in\mathcal{M}_{g,[m]}$. Suivant B.~Noohi \cite{NOOHI04} ces groupes d'automorphismes \emph{géométriques} s'injectent dans le groupe fondamental 
	\begin{equation*}
		\omega_x:I_x\to\pi_1^{geom}(\mathcal{M}_{g,[m]},x).
	\end{equation*}
	Rappelons que $\pi_1^{geom}(\mathcal{M}_{g,[m]})$ est isomorphe au complété profini du groupe fondamental orbifold $\pi_1^{orb}((\M_{g,[m]})_\mathbb C^{an})$. Identifiant ce dernier avec le mapping class group $\mcg$, groupe des difféormorphimes de surfaces, le théorème de réalisation de Nielsen-Kerckhoff \cite{KERC} donne une correspondance bijective entre les sous-groupes finis géométriques de $\pi_1^{geom}(\M_{g,[m]})$ et les sous-groupes finis du mapping class group $\Gamma_{g,[m]}$. Nous utiliserons cette correspondance dans la suite.
	
	\bigskip
	
	Une approche par théorie de Grothendieck-Teichmüller sur la torsion \emph{profinie} d'ordre premier a ainsi montré que \emph{l'action de $\Gq$ sur l'inertie champêtre géométrique d'ordre premier des espaces de modules $\M_{0,[m]}$ et $\M_{1,[m]}$ est donnée par $\chi(\sigma)$-conjugaison} -- voir \cite{ Colg0, Colg1}. Il s'agit ici de compléter cette comparaison d'action galoisienne entre ces groupes d'inerties à l'infini et champêtre.

	L'objectif principal de cet article est de généraliser ce résultat aux espaces de modules de courbes de genre quelconque ainsi que de s'affranchir de l'hypothèse de primalité, ce que nous obtenons sous une certaine condition géométrique d'absence de factorisation étale (voir Théorème 5.4 pour une définition) :
	
	\medskip
	
	{\noindent\textsc{Théorème A} ---
		\itshape Soit $G$ un groupe d'inertie champêtre géométrique cyclique de $\pi_1^{geom}(\M_{g,[m]})$ n'admettant pas de factorisation étale, pour un genre $g$ et un nombre de points marqués $m$ quelconques. L'action du groupe de Galois absolu $\Gq$ sur $G$ est donnée par $\chi(\sigma)$-conjugaison.
	}
	
	\medskip
	
	Du point de vue galoisien, ce résultat complète ainsi en partie la condition nécessaire liée à la question suivante  -- voir question 8.5 de \cite{LOC11} : \emph{les multitwists profinis sont-ils entièrement caractérisés par une action galoisienne donnée par $\chi(\sigma)$-conjugaison, à inertie près ?}
	
	\bigskip
	
	Pour un groupe $G$ fini cyclique fixé, notre approche consiste à caractériser les composantes irréductibles du \emph{lieu spécial} $\M_{g,[m]}(G)$, sous-champ de $\M_{g,[m]}$ dont les points géométriques admettent $G$ comme groupe d'inertie géométrique. Cette question se réduisant au cas des champs $\M_{g,[m]}[G]$ des courbes marquées avec $G$-autmorphismes -- voir section \ref{sec:lieuxSpec} --, nous donnons une construction algébrique des invariants de Hurwitz de M.~Cornalba \cite{COR87,CORER08} que nous affinons dans le cas des $G$-revêtements marqués en  donnée de branchement -- voir section \ref{sec:brData}. Nous construisons ainsi des $G$-revêtements de donnée de branchement fixée, dont nous déduisons une décomposition en sous-champs géométriquement irréductibles en section \ref{sec:CompIrr}
	\[
	\M_{g,[m]}[G]/\Aut(G)=\coprod\M_{g,[m],\underline{\bf kr}}[G]/\Aut(G). 
	\]
	
	La caractérisation de ces composantes nous permet alors d'établir leur invariance sous action galoisienne puis la formule de $\chi(\sigma)$-conjugaison souhaitée -- voir section \ref{sec:actionGal}.

	\subsection{Torsion profinie et action galoisienne}\label{sec:IntroGT}
	À la différence de l'inertie géométrique, la torsion profinie n'est \emph{a priori} pas reliée à des composantes irréductibles. Ainsi, suivant \cite{LS06}, questions 3.6 et 3.5, se pose la question d'exprimer l'action galoisienne sur la torsion \emph{profinie} du groupe fondamental $\pi_1^{geom}(\M_{g,[m]})$ à partir des groupes d'inertie \emph{champêtre} de celui-ci.
	
	\bigskip
	
	Dans un contexte de théorie cohomologique de Serre introduite dans \cite{SERREL}, \cite{Colg0, Colg1} montrent ainsi \emph{en genres zéro et un} que la torsion \emph{profinie d'ordre premier} de $\pi_1^{geom}(\M_{g,[m]})$ est conjuguée à la torsion géométrique. Le résultat déjà cité d'action galoisienne sur ces éléments s'obtient ensuite dans un contexte de théorie de Grothendieck-Teichmüller -- voir \emph{ibid}.

	Plus précisément, le groupe de Grothendieck-Teichmüller $\GroT$ introduit par V.~G.~Drinfel'd \cite{DRI90} agit sur $\pi_1^{geom}(\M_{0,[m]})$ et contient $\Gq$. Par ailleurs, Y.~Ihara a montré \cite{IHARA200} qu'il existe un point base tangentiel $\overrightarrow{b}$ en lequel baser le groupe fondamental de sorte que l'action de $\Gq$ s'étendent à $\GroT$:
	\[\xymatrix{
		\Gq\ar@{^(->}[rr]\ar@{^(->}[dr]& &\Aut(\pi_1^{geom}(\M_{0,[m]},\overrightarrow{b}))\\
		&\GroT\ar@{^(->}[ur]&
	}\]
	
	De cette compatibilité découle que le résultat d'action de $\GroT$ par $\lambda$-conjugaison sur la torsion cyclique profinie d'ordre premier établi dans \cite{Colg0} se transmet à $\Gq$ sur ces mêmes éléments. Nous étendons ici partiellement ces résultats au cas des espaces de modules de courbes de genre deux.
	
	\medskip
	
	{\noindent\textsc{Théorème B} ---  
		\itshape Le groupe de Galois absolu $\Gq$ agit sur la torsion profinie d'ordre premier sans factorisation étale de $\pi_1^{geom}(\M_{2,[m]})$ par $\chi(\sigma)$-conjugaison.
	}
	
	\medskip

	Ce résultat repose sur la bonté du mapping class group $\Gamma_{2,[m]}$ ainsi que sur un résultat cohomologique de P.~Symonds qui réduit les classes de conjugaisons de $p$-torsion profinie aux classes de conjugaisons de $p$-torsion finis, i.e. géométriques. Le résultat d'action est alors une conséquence directe du résultat d'action de $\Gq$ sur l'inertie géométrique établie au théorème A.

	\section{Lieux spéciaux, mapping class group, action galoisienne}\label{sec:lieuxSpec}
	Considérons les espaces des modules de courbes \emph{marquées} $\M_{g,[m]}$. Nous donnons ici la définition algébrique \emph{en famille} du champ des lieux spéciaux $\M_{g,[m]}(G)$ en reprenant les définitions de \cite{FRED03} établies pour les espaces $\M_{g}$. Rapprochant ces espaces du champ des espaces de modules des courbes avec $G$-automorphismes $\M_g[G]$ étudiés dans \cite{TUF93,EKE95,MAU06}, et des classes de conjugaison de $G$ dans le mapping class group $\mcg$, nous réduisons l'étude de l'action galoisienne sur les composantes irréductibles du champ des lieux spéciaux $\M_{g,[m]}(G)$ à l'étude de cette action sur les composantes irréductibles de $\M_{g,[m]}[G]/\Aut(G)$.

	\subsection{Lieux spéciaux de $\M_{g,[m]}$, espace de modules de courbes avec $G$-automorphismes}
	Considérons le cas de courbes hyperboliques de genre $g$ à $m$ points marqués, \emph{i.e.} telles que $2g-2+m>0$, et notons $\M_{g,[m]}$ \emph{l'espace des modules de courbes à points marqués} qui classifie les familles de courbes de genre $g$ à $m$ points marqués \emph{à permutation près} : ses objets sont donnés par des familles de courbes $\X/S$ de genre $g$ sur un $\QQ$-schéma $S$, i.e. un morphisme
	\[
	p:\X\to S\text{ propre lisse, où } \X \text{ est une courbe de genre } g
	\]
	dont les fibres géométriques $\X_s$ sont des courbes connexes de genres $g$, où $\X/S$ est muni d'un $m$-\emph{marquage}; i.e. un diviseur de Cartier effectif $D$ étale fini de degré $m$ sur $S$.

	Remarquons que cette notion de marquage étend celle de pointage qui définit l'espace des modules de courbes \emph{ordonnées} $\M_{g,m}$ où les points sont donnés par des sections $(\sigma_i:S\to \X)_{i=1\dots m}$. La donnée d'un marquage correspond à accepter la permutation des points, puisque
	\[\M_{g,[m]}=\M_{g,m}/\mathfrak{S}_m.\]
	où le quotient est pris dans la catégorie des champs.
	
	Suivant \cite{KNUII}, les courbes étant hyperboliques, l'espace de modules $\M_{g,m}$ admet une structure de $\ZZ$-champ algébrique de Deligne-Mumford, et il en est de même pour $\M_{g,[m]}$ comme quotient fini de celui-ci.

	\bigskip
	
	Par ailleurs, une famille de courbes $\X/S$ admet une action fidèle d'un groupe $G$ lorsqu'il existe un morphisme de schémas en groupes 
	\[
	G\to \Aut_S(\X) \text{ injectif sur les fibres.}
	\]
	Le champ des courbes avec $G$-automorphismes suivant, étudié notamment par S.~Tufféry et T.~Ekedahl \cite{TUF93,EKE95} dans le cas non marqué, est un outil essentiel pour la description de $\M_{g,[m]}(G)$ et de ses composantes irréductibles.
	\begin{defi}
		Soit $G$ un groupe fini. Le champ $\M_{g,[m]}[G]$ des courbes avec $G$-automorphismes est la catégorie fibrée en groupoïde sur $Sch/\QQ$ classifiant les familles de courbes $m$-marquées de genre $g$ munies d'une action fidèle de $G$ permutant les points. 
	\end{defi}
	Les objets de $\M_{g,[m]}[G]$ sont des triplets $(\X/S,\iota\colon G\rightarrow \Aut_S(\X),D)$ où $D$ est un diviseur étale de Cartier relatif fixe par $G$ de degré $m$; les morphismes sont $G$-équivariants et préservent $D$.

	Dans la suite, nous nous intéressons aux composantes irréductibles du champ suivant.
	\begin{defi}
		On nomme \emph{champ des lieux spéciaux} et l'on note $\M_{g,[m]}(G)$ la sous-catégorie de $\M_{g,[m]}$ fibrée en groupoïde constituée des familles de courbes $\X/S$ dont les fibres \emph{géométriques} admettent une action fidèle de $G$ permutant les points.
	\end{defi}

	Considérons le mapping class group $\mcg:=\pi_0(\text{Diff}^+(S_{g,[m]}))$, composante connexe du groupe des difféormorphimes de la surface topologique $S_{g,m}$ qui préservent l'orientation, avec permutation permise des points. Pour un sous-groupe $G$ fini de $\mcg$ fixé, le champ des lieux spéciaux est non vide puisque, selon le théorème de réalisation de Nielsen-Kerckhoff \cite{KERC}, pour tout sous-groupe fini $G$ de $\Gamma_{g,[m]}$, il existe une structure conforme sur la surface topologique $X$ de type $(g,m)$ tel que $G$ est le groupe des automorphismes conformes $\Aut(X)$.

	Notons que $\M_{g,[m]}[G]$ n'est pas un sous-champ de $\M_{g,[m]}$. Plus précisément, le champ des lieux spéciaux $\M_{g,[m]}(G)\subset \M_{g,[m]}$ est l'image de $\M_{g,[m]}[G]$ par le morphisme d'oubli de l'action de $G$.
	\begin{prop}
		Le champ des lieux spéciaux $\M_{g,[m]}(G)$ est un $\QQ$-champ de Deligne-Mumford. 
	\end{prop}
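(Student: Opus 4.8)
The plan is to realise $\M_{g,[m]}(G)$ as a closed substack of the Deligne--Mumford $\QQ$-stack $\M_{g,[m]}$, and then to use that a closed (more generally, a locally closed) substack of a Deligne--Mumford stack is again one. By construction $\M_{g,[m]}(G)$ is the image of the forgetful $1$-morphism
\[
f\colon \M_{g,[m]}[G]\longrightarrow \M_{g,[m]},\qquad (\X/S,\ \iota\colon G\to\Aut_S(\X),\ D)\longmapsto (\X/S,D),
\]
so the whole matter is to understand $f$ and its image.

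The first and main step is to show that $f$ is representable, finite and unramified. Fix an object $(\X/S,D)$ of $\M_{g,[m]}$, that is, a morphism $S\to\M_{g,[m]}$. Since the marked curves are hyperbolic ($2g-2+m>0$), the automorphism scheme $\Aut_S(\X,D)$ of the marked family is finite and unramified over $S$; this is exactly the input making $\M_{g,[m]}$ Deligne--Mumford, cf.\ \cite{KNUII}. Consequently the scheme $\Hom_{\mathrm{gp}}(G,\Aut_S(\X,D))$ of group homomorphisms, a closed subscheme of the $|G|$-fold fibre power of $\Aut_S(\X,D)$ over $S$, is finite and unramified over $S$; and the locus where such a homomorphism is injective on geometric fibres is the complement of the finitely many conditions ``$g\mapsto e$'' for $g\neq 1$, hence is open and closed there, because the identity section of the unramified group scheme $\Aut_S(\X,D)$ is itself open and closed. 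Write $F\to S$ for the resulting finite unramified scheme of faithful $G$-actions on $(\X,D)$; note that working with $\Aut_S(\X,D)$, where $D$ is a divisor rather than an ordered tuple, automatically encodes that $G$ permutes the marked points. Finally, unwinding the $2$-fibre product shows $\M_{g,[m]}[G]\times_{\M_{g,[m]}}S\simeq F$: the only morphisms in the fibre groupoid are forced by $G$-equivariance to be identities, so the $2$-fibre product is a scheme and not merely an algebraic stack. Hence $f$ is representable, finite and unramified.

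Being representable and finite, $f$ is in particular representable and proper, so its scheme-theoretic image is a closed substack $Z\subseteq\M_{g,[m]}$. One identifies $Z$ with $\M_{g,[m]}(G)$: a geometric point of $\M_{g,[m]}$ lies in $Z$ precisely when the corresponding curve admits a faithful $G$-action permuting its marked points, and this locus is closed by a specialisation argument --- over a trait, a faithful $G$-action on the generic fibre extends to the whole family by the valuative criterion for the proper morphism $\Aut_S(\X,D)\to S$, remains faithful because $\Aut_S(\X,D)$ is unramified (the locus where an extended automorphism coincides with the identity is open and closed, hence all of $S$ once non-empty, contradicting non-triviality on the generic fibre), and still preserves $D$ since $D$ is $S$-flat. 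Since a closed immersion is representable, a monomorphism and of finite type, $\M_{g,[m]}(G)=Z$ inherits from $\M_{g,[m]}$ an unramified diagonal (as $\Delta_Z$ is a base change of $\Delta_{\M_{g,[m]}}$ along the monomorphism $Z\times_\QQ Z\to\M_{g,[m]}\times_\QQ\M_{g,[m]}$) together with the property of being an algebraic $\QQ$-stack locally of finite type, i.e.\ it is a Deligne--Mumford $\QQ$-stack.

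The real work sits in the first step: producing $\Hom_{\mathrm{gp}}(G,\Aut_S(\X,D))$ as an honest scheme, correctly handling the marking and the permutation of points, and verifying that the $2$-fibre product collapses to a scheme so that $f$ is genuinely representable. The identification of the image $Z$ with the geometric-fibre description of $\M_{g,[m]}(G)$ is the other point demanding a little care.
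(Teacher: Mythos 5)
Votre démonstration suit essentiellement la même voie que celle de l'article : on réalise $\M_{g,[m]}(G)$ comme image du morphisme d'oubli $\M_{g,[m]}[G]\to\M_{g,[m]}$, représentable et propre, donc d'image fermée, qui hérite ainsi de la structure de champ de Deligne--Mumford de $\M_{g,[m]}$. Vous explicitez simplement (et correctement) les points que l'article déclare immédiats, à savoir la représentabilité et la finitude via le schéma des actions fidèles de $G$ sur $(\X,D)$ et l'identification de l'image avec le lieu défini fibre à fibre.
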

	
	\begin{proof}
		C'est immédiat puisque le morphisme d'oubli précédent $\M_{g,[m]}[G]\to\M_{g,[m]}$ est représentable et propre, donc son image $\M_{g,[m]}(G)$ est fermée, et $\M_{g,[m]}(G)$ hérite de la structure de champ de $\M_{g,[m]}[G]$.
	\end{proof}
	
	Le champ des lieux spéciaux $\M_{g,[m]}(G)$ correspond aux \emph{lieux $G$-symétriques} introduits par S.~Broughton \cite{BRO90} dans le contexte analytique des espaces de Teich\-müller et définis comme l'image dans $\left(\M_{g}\right)_{\mathbb C}$ des points de $\mathcal{T}_{g}$ stables par un conjugué de $G$. En particulier, le résultat suivant découle directement d'un résultat de théorie de Teichmüller \cite{Gonzalez92} -- voir \cite{Rom09} pour une preuve purement algébrique.

	\begin{prop}\label{prop:norm}
		Le champ $\M_{g,[m]}[G]/\Aut(G)$ est le normalisé du champ des lieux spéciaux $\M_{g,[m]}(G)$.  
	\end{prop}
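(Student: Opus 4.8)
Le plan consiste à établir que le morphisme d'oubli de l'action
$\M_{g,[m]}[G]/\Aut(G)\to \M_{g,[m]}(G)$
est la normalisation. Comme $\M_{g,[m]}(G)$ est l'image schématique (fermée) de $\M_{g,[m]}[G]$ par un morphisme représentable et propre d'après la proposition précédente, il suffit de vérifier deux points : d'une part que $\M_{g,[m]}[G]/\Aut(G)$ est un champ normal, et d'autre part que le morphisme ci-dessus est fini, birationnel, et un isomorphisme au-dessus d'un ouvert dense de $\M_{g,[m]}(G)$. La normalité se ramène à la normalité de $\M_{g,[m]}[G]$ (le quotient par le groupe fini $\Aut(G)$ préserve la normalité), laquelle résulte du fait que $\M_{g,[m]}[G]$ est lisse sur $\QQ$ : c'est un champ de Deligne-Mumford classifiant des courbes marquées munies d'une action fidèle de $G$, et son lissité se déduit de celle de $\M_{g,[m]}$ combinée à la rigidité des $G$-actions (déformations non obstruées, cf.\ \cite{TUF93,EKE95,MAU06} dans le cas non marqué, l'ajout du diviseur étale $D$ préservant la lissité).

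La partie essentielle est l'analyse fibre à fibre du morphisme d'oubli. Au-dessus d'un point géométrique $[\X]$ de $\M_{g,[m]}(G)$, la fibre de $\M_{g,[m]}[G]\to\M_{g,[m]}(G)$ est l'ensemble des plongements fidèles $\iota\colon G\hookrightarrow\Aut(\X)$ préservant le marquage, sur lequel $\Aut(G)$ agit par précomposition. Deux tels plongements donnent le même point de $\M_{g,[m]}[G]/\Aut(G)$ si et seulement si leurs images $\iota_1(G)$ et $\iota_2(G)$ sont conjuguées dans $\Aut(\X)$ : en effet, un isomorphisme de $G$-courbes entre $(\X,\iota_1)$ et $(\X,\iota_2)$ est précisément un élément $h\in\Aut(\X)$ avec $h\iota_1(g)h^{-1}=\iota_2(\alpha(g))$ pour un certain $\alpha\in\Aut(G)$. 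Ainsi la fibre de $\M_{g,[m]}[G]/\Aut(G)\to\M_{g,[m]}(G)$ au-dessus de $[\X]$ s'identifie à l'ensemble des sous-groupes de $\Aut(\X)$ abstraitement isomorphes à $G$, pris à conjugaison près dans $\Aut(\X)$ (pondérés convenablement par les stabilisateurs, ce qui est automatiquement géré par le langage des champs). Sur le lieu ouvert dense où $\Aut(\X)\cong G$, cette fibre est un singleton : le morphisme est génériquement un isomorphisme, donc birationnel, et il est fini car composé du morphisme fini $\M_{g,[m]}[G]\to\M_{g,[m]}[G]/\Aut(G)$ est lui-même fini tandis que $\M_{g,[m]}[G]\to\M_{g,[m]}(G)$ est propre quasi-fini.

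Un morphisme fini birationnel de source normale vers une base réduite étant la normalisation (par la propriété universelle de celle-ci, la source normale recevant une factorisation unique de la normalisation), on conclut. L'obstacle principal que j'anticipe est de nature technique plutôt que conceptuelle : il faut justifier proprement, dans le langage des champs, que le quotient $\M_{g,[m]}[G]/\Aut(G)$ est bien normal (le quotient d'un champ normal par un groupe fini agissant, sans hypothèse de liberté, reste normal car la normalité est une propriété locale pour la topologie étale et se teste sur un atlas, où elle résulte de l'intégralité des anneaux d'invariants sous un groupe fini), et que le morphisme d'oubli est génériquement un isomorphisme de champs — ce qui requiert de contrôler non seulement les points géométriques mais aussi les groupes d'automorphismes, donc de vérifier que sur le lieu où $\Aut(\X)=G$ il n'y a pas d'automorphisme supplémentaire du triplet $(\X,\iota,D)$ modulo $\Aut(G)$ autre que l'identité, ce qui est exactement le calcul de stabilisateur esquissé ci-dessus. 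Le résultat de théorie de Teichmüller cité \cite{Gonzalez92} (ou sa version algébrique \cite{Rom09}) fournit l'énoncé correspondant du côté analytique et sert de guide pour l'argument algébrique.
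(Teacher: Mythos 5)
Le c\oe{}ur de votre sch\'ema (source normale $+$ morphisme fini $+$ g\'en\'eriquement isomorphisme $\Rightarrow$ normalisation) est raisonnable, et les parties normalit\'e (lissit\'e de $\M_{g,[m]}[G]$ en caract\'eristique nulle, quotient par le groupe fini $\Aut(G)$) et identification des fibres sont correctes. Mais l'\'etape d\'ecisive --- le fait que le morphisme d'oubli soit g\'en\'eriquement un isomorphisme sur chaque composante --- est pr\'ecis\'ement ce que vous supposez au lieu de le d\'emontrer. Votre argument repose sur la densit\'e du lieu o\`u $\Aut(\X)\cong G$, et cette densit\'e est fausse en g\'en\'eral : pour de nombreux types topologiques, la courbe g\'en\'erique de la strate correspondante a un groupe d'automorphismes strictement plus grand que $G$. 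Par exemple, toute courbe de genre $2$ porte l'involution hyperelliptique, donc pour une action de $\ZZ/2\ZZ$ de type biell\-iptique (quotient de genre $1$) le groupe g\'en\'erique contient d\'ej\`a $(\ZZ/2\ZZ)^2$ ; plus g\'en\'eralement les signatures non maximales de la liste de Singerman produisent des strates sur lesquelles votre \og ouvert dense \fg{} est vide. Sur de telles composantes, la conclusion (fibre g\'en\'erique r\'eduite \`a un point, comparaison des gerbes r\'esiduelles) doit \^etre \'etablie autrement : il faut montrer que, pour la courbe g\'en\'erique de la strate, les sous-groupes de $\Aut(\X)$ isomorphes \`a $G$ et du type topologique donn\'e sont tous conjugu\'es dans $\Aut(\X)$, et que le morphisme induit sur les automorphismes est un isomorphisme. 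C'est exactement le contenu du r\'esultat de th\'eorie de Teichm\"uller de Gonzalez-Diez--Harvey (preuve alg\'ebrique chez Romagny) que l'article se contente de citer --- il n'y a pas de preuve interne dans le texte --- et que vous n'utilisez que comme \og guide \fg.

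Notez aussi que cette subtilit\'e interagit avec le sens, non standard, donn\'e ici aux \og composantes irr\'eductibles \fg{} de $\M_{g,[m]}(G)$ : ce sont les strates \'equisym\'etriques ferm\'ees $\M_{g,[m],\mathcal G}$ index\'ees par les classes de conjugaison, qui peuvent \^etre embo\^it\'ees les unes dans les autres (dans l'exemple ci-dessus, le lieu biell\-iptique est contenu dans $\M_2=\M_2(\ZZ/2\ZZ)$), si bien que l'\'enonc\'e m\^eme de \og normalis\'e \fg{} doit \^etre compris au sens pr\'ecis des r\'ef\'erences cit\'ees ; un argument na\"if de birationnalit\'e composante par composante au sens sch\'ematique usuel ne suffit pas. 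Enfin, votre phrase sur la finitude est embrouill\'ee mais r\'eparable (morphisme repr\'esentable propre et quasi-fini, donc fini, qui descend le long de la surjection finie $\M_{g,[m]}[G]\to\M_{g,[m]}[G]/\Aut(G)$) ; le v\'eritable trou est l'\'etape de birationnalit\'e.
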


	Le résultat suivant en découle alors immédiatement.
	\begin{cor}\label{prop:irrSpecIrrHur}
		Soit $\M_{g,[m]}(G)$ le champ des lieux spéciaux associé à un groupe $G$ fini. Alors les composantes irréductibles de $\M_{g,[m]}(G)$ sont en bijection avec celles de $\M_{g,[m]}[G]/\Aut(G)$.
	\end{cor}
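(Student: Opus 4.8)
The plan is to read this off Proposition~\ref{prop:norm} together with the elementary fact that the normalization of a reduced noetherian Deligne--Mumford stack induces a canonical bijection on sets of irreducible components. Let $\nu\colon \M_{g,[m]}[G]/\Aut(G)\to \M_{g,[m]}(G)$ be the morphism exhibiting the source as the normalization of the target (more precisely of its reduction, which has the same irreducible components). As a closed substack of the noetherian stack $\M_{g,[m]}$, the special locus $\M_{g,[m]}(G)$ has only finitely many irreducible components, and $\nu$ is finite and surjective.

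First I would settle the scheme case. If $X$ is a reduced noetherian scheme with irreducible components $X_1,\dots,X_r$ (with their reduced structure), its normalization is canonically identified with $\coprod_{i=1}^{r}\widetilde{X_i}\to X$, where $\widetilde{X_i}$ is the normalization of the integral scheme $X_i$, hence integral, hence irreducible; thus the irreducible components of $\widetilde{X}$ are exactly the pieces $\widetilde{X_i}$, each mapping onto $X_i$ and inducing an isomorphism over a dense open of $X_i$, so that $\widetilde{X_i}\mapsto X_i$ is the desired bijection. To globalize, I would use that normalization commutes with \'etale base change: for an \'etale scheme atlas $p\colon U\to \M_{g,[m]}(G)$, the pullback of $\nu$ along $p$ is the normalization of $U$, and since the irreducible components of a Deligne--Mumford stack can be read off from such an atlas, the scheme-level decomposition descends to a canonical bijection between the irreducible components of $\M_{g,[m]}[G]/\Aut(G)$ (which, being normal, coincide with its connected components) and those of $\M_{g,[m]}(G)$.

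There is essentially no obstacle here: the only point to keep in mind is the stacky bookkeeping around the $\Aut(G)$-quotient, but this requires no extra work, since Proposition~\ref{prop:norm} already identifies the quotient stack $\M_{g,[m]}[G]/\Aut(G)$ with the normalization, so all the identifications induced by the $\Aut(G)$-action have been absorbed into that statement. Consequently the bijection on irreducible components is immediate once Proposition~\ref{prop:norm} is granted.
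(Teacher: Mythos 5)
Your proposal is correct and follows exactly the paper's route: the paper deduces the corollary immediately from Proposition \ref{prop:norm}, the content being the standard fact that the normalization of a (reduced, noetherian) Deligne--Mumford stack induces a canonical bijection on irreducible components, which you simply spell out via the scheme case and \'etale descent.
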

	
	\bigskip
	
	Étant donné une courbe $\mathcal X/\CC$, \emph{via} le choix d'un difféomorphisme de $\mathcal X$ avec la surface de référence de genre $g$ et $m$-marquée, on définit un homomorphisme injectif $\theta\colon\Aut_\CC(\mathcal X) \to \Gamma_{g, [m]}$. Le choix d'un autre difféomorphisme s'obtient en conjuguant l'image de $\theta$ dans $\Gamma_{g, [m]}$. Par suite, si $\mathcal G$ est la classe de conjugaison dans $\Gamma_{g, [m]}$ d'un sous-groupe isomorphe à $G$, alors la condition $\Aut_\CC(\mathcal X)\in \mathcal G$ est indépendante de tous les choix.
	
	\begin{defi}\label{defi:lieuSpec}
		On nomme lieu spécial de $\mathcal{G}$, et on note $\M_{g,[m],\mathcal{G}}$, la sous-catégorie de $\M_{g,[m]}(G)$ fibrée en groupoïde sur $Sch/\mathbb{C}$ classifiant les objets dont les fibres $\X_s$ vérifient $\Aut(\X_s)\in \mathcal{G}$.
	\end{defi}
	Notant $\mathcal Z$ une composante irréductible de $(\M_{g,[m]}[G]/\Aut(G))_{\mathbb{C}}$, la discussion précédente se résume alors par le diagramme suivant :
	\[
	\xymatrix{
		\mathcal Z\ar[d]\ar[r]&(\M_{g,[m]}[G]/\Aut(G))_{\mathbb{C}}\ar[d] & \\
		\M_{g,[m],\mathcal{G}}\ar[r]&\M_{g,[m]}(G)_ {\CC}\ar[r]&\left(\M_{g,[m]}\right)_\mathbb{C}
	} 
	\]

	L'espace $\M_{g,[m],\mathcal{G}}$ possède une structure de champ algébrique par le résultat ci-après qui établit une correspondance bijective entre l'ensemble des classes de conjugaison de $G$ dans $\Gamma_{g,[m]}$ et l'ensemble des composantes irréductibles du lieu spécial $\M_{g,[m]}(G)$.

	\begin{prop}\label{prop:irrIsconj}
		Soit $G$ un sous-groupe fini du mapping class group, et $\mathcal{G}$ sa classe de conjugaison dans $\Gamma_{g,[m]}$. Alors le lieu spécial $\M_{g,[m],\mathcal{G}}$ est un sous-champ algébrique, composante irréductible du champ des lieux spéciaux $\M_{g,[m]}(G)_{\CC}\subset \left(\M_{g,[m]}\right)_\CC$.
	\end{prop}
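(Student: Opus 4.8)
The plan is to prove the statement analytically over $\CC$ by Teichmüller theory, and then to read off the algebraic and componentwise structure from Proposition~\ref{prop:norm} and Corollaire~\ref{prop:irrSpecIrrHur}. Over $\CC$ one has $(\M_{g,[m]})_\CC\cong[\mathcal T_{g,[m]}/\Gamma_{g,[m]}]$ with projection $\pi\colon\mathcal T_{g,[m]}\to\M_{g,[m]}$, and a geometric point $[\X]$ lies in $\M_{g,[m]}(G)_\CC$ precisely when some (equivalently every) lift $x\in\mathcal T_{g,[m]}$ is fixed by a finite subgroup $G'\subseteq\Gamma_{g,[m]}$ isomorphic to $G$; thus $\M_{g,[m]}(G)_\CC=\pi\bigl(\bigcup_{G'}\mathcal T_{g,[m]}^{\,G'}\bigr)$, the union running over all such $G'$. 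First I would record that each fixed locus $\mathcal T_{g,[m]}^{\,G'}$ is non-empty by the Nielsen--Kerckhoff realization theorem, and that it is a \emph{connected} (indeed contractible) closed complex submanifold, biholomorphic to the Teichmüller space of the quotient orbifold $[\X/G']$ --- standard Teichmüller theory, \cite{Gonzalez92,Rom09}. Since $\pi$ is $\Gamma_{g,[m]}$-invariant and $\gamma\cdot\mathcal T_{g,[m]}^{\,G'}=\mathcal T_{g,[m]}^{\,\gamma G'\gamma^{-1}}$, the image $\pi(\mathcal T_{g,[m]}^{\,G'})$ depends only on the conjugacy class $\mathcal G'$ of $G'$, and by Definition~\ref{defi:lieuSpec} (and Broughton's description recalled above) this image is exactly the special locus $\M_{g,[m],\mathcal G'}$; being the continuous image of a connected set, it is irreducible.

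Next I would invoke finiteness: there are only finitely many $\Gamma_{g,[m]}$-conjugacy classes of finite subgroups isomorphic to $G$ --- equivalently, finitely many topological types of faithful $G$-actions on a surface of type $(g,m)$, which one also sees from the fact that the algebraic stack $\M_{g,[m]}[G]$ is of finite type, so $\M_{g,[m]}[G]/\Aut(G)$ has finitely many irreducible components, each of the form $[\mathcal T_{g,[m]}^{\,G'}/N]$ for a normalizer $N$ and hence connected, one per conjugacy class. Writing $\mathcal G_1,\dots,\mathcal G_N$ for these classes this yields $\M_{g,[m]}(G)_\CC=\bigcup_{i=1}^N\M_{g,[m],\mathcal G_i}$. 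The algebraic-substack assertion then follows because the forgetful morphism $\M_{g,[m]}[G]\to\M_{g,[m]}$ is representable and proper: $\M_{g,[m]}[G]/\Aut(G)$ is Deligne--Mumford, each of its irreducible components $\mathcal Z_i$ is a closed Deligne--Mumford substack, and its image $\M_{g,[m],\mathcal G_i}=\nu(\mathcal Z_i)$ in $\M_{g,[m]}$ is a closed Deligne--Mumford substack of $\M_{g,[m]}(G)$, where $\nu\colon\M_{g,[m]}[G]/\Aut(G)\to\M_{g,[m]}(G)$ is the normalization of Proposition~\ref{prop:norm}.

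To conclude I would show each $\M_{g,[m],\mathcal G_i}$ is a \emph{full} irreducible component, i.e.\ maximal for inclusion. Since $\nu$ is the normalization, its source is normal, its irreducible components are exactly the pairwise disjoint $\mathcal Z_i$, and for a normalization the components of the source map bijectively onto those of the target; combined with Corollaire~\ref{prop:irrSpecIrrHur} and the identification $\nu(\mathcal Z_i)=\M_{g,[m],\mathcal G_i}$, this shows the $\M_{g,[m],\mathcal G_i}$ are precisely the irreducible components of $\M_{g,[m]}(G)_\CC$; in particular they are pairwise incomparable, and $\M_{g,[m],\mathcal G}$ (for $\mathcal G$ the conjugacy class of the given $G$) is one of them, as asserted.

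The step I expect to be the main obstacle is making the bridge between the two pictures rigorous: identifying the image of an irreducible component of $\M_{g,[m]}[G]/\Aut(G)$ with a \emph{single} special locus $\M_{g,[m],\mathcal G}$, and checking that the correspondence ``conjugacy class of $G$ in $\Gamma_{g,[m]}$'' $\leftrightarrow$ ``irreducible component of $\M_{g,[m]}(G)$'' is genuinely bijective. The delicacy is that the generic stabilizer of a fixed locus $\mathcal T_{g,[m]}^{\,G'}$ may strictly contain $G'$ (already for the hyperelliptic involution in genus~$2$, where $\mathcal T_{2}^{\,G'}=\mathcal T_{2}$), so one must use the connectedness of the $\mathcal T_{g,[m]}^{\,G'}$ together with a Baire-category argument --- or, alternatively, the refined branching-data decomposition of $\M_{g,[m]}[G]/\Aut(G)$ developed in Sections~\ref{sec:brData}--\ref{sec:CompIrr} --- to see that distinct conjugacy classes of $G$ still yield distinct, pairwise non-nested special loci, and to pin down the stack structure carried by $\M_{g,[m],\mathcal G}$ itself and not merely by its closure.
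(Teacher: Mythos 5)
Your route --- fixed loci $\mathcal T_{g,[m]}^{\,G'}$ in Teichm\"uller space, connectedness via Nielsen--Kerckhoff, images in moduli, algebraicity and maximality via the normalisation $\nu\colon \M_{g,[m]}[G]/\Aut(G)\to\M_{g,[m]}(G)$ --- is the same strategy as the proof the paper relies on; in fact the paper gives no argument of its own here, it simply invokes Broughton \cite{BRO90} (whose Theorem~2.1 rests on Mumford's GAGA-type generalised Riemann theorem \cite{MUM67}) for $m=0$ and \cite{Cui08} for marked curves, and your use of the normalisation in place of the GAGA step is a reasonable variant. The problem is that the step those references actually supply is the one you leave open, as you acknowledge in your final paragraph: the identification of $\pi(\mathcal T_{g,[m]}^{\,G'})$ --- equivalently of the image $\nu(\mathcal Z_i)$ of a component of the normalisation --- with the single locus $\M_{g,[m],\mathcal G'}$, together with the injectivity of the correspondence, is asserted in your first paragraph but never proved.

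This is a genuine gap, not a formality. Definition \ref{defi:lieuSpec} requires $\Aut(\X_s)\in\mathcal G$, i.e.\ the \emph{full} automorphism group lies in the class, whereas a point of $\pi(\mathcal T_{g,[m]}^{\,G'})$ only has $\Aut(\X_s)$ \emph{containing} a conjugate of $G'$. Passing from ``contains'' to ``equals'' --- showing the equisymmetric stratum is non-empty and dense in the closed irreducible image of the fixed locus, so that this image is its closure and distinct classes give distinct, non-nested components --- is precisely the content of Broughton's Theorem~2.1 and of Cui's extension, and it can fail without care: in genus $2$ (your own example) a non-hyperelliptic involution $\tau$ satisfies $\mathcal T_2^{\langle\tau\rangle}=\mathcal T_2^{\langle\tau,\iota\rangle}$ because the hyperelliptic involution $\iota$ acts trivially on $\mathcal T_2$, so the locus where $\Aut(\X)$ is exactly conjugate to $\langle\tau\rangle$ is empty even though the image of the fixed locus is a non-empty irreducible closed substack. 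Your two proposed repairs do not close this: the ``Baire-category argument'' is only a name for the missing density statement (one must control the union of the fixed loci of the finitely many overgroups and treat separately the overgroups whose fixed locus coincides with $\mathcal T_{g,[m]}^{\,G'}$, as in the example above), and the branching-data decomposition of Sections \ref{sec:brData}--\ref{sec:CompIrr} concerns cyclic $G$ only, while the proposition is stated for an arbitrary finite subgroup. So either quote \cite{BRO90,Cui08} at this point, as the paper does, or write out the density/stratification argument; without it the proof is incomplete exactly at the point that makes the statement non-trivial.
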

	Ce résultat d'irréductibilité est établi par S.~Broughton \cite{BRO90} pour les lieux spéciaux d'espace des modules sans points marqués $\M_g$ et repose sur un théorème de Riemann généralisé de D.~Mumford \cite{MUM67} de type GAGA. Il est étendu aux courbes marquées $\M_{g,[m]}$ dans \cite{Cui08}.

	\subsection{Composantes irréductibles, actions galoisiennes}
	Soit $G$ un sous-groupe fini de $\Gamma_{g,[m]}$ et $\cal G$ une classe de conjugaison de $G$ dans $\Gamma_{g,[m]}$ fixée. L'étude de l'action de $\Gq$ sur $\cal G$ passe par celle de l'ensemble des composantes irréductibles du normalisé $\cal M_{g,[m]}[G]/\Aut(G)$ de $\cal M_{g,[m]}(G)$ de la fa\c con suivante.
	
	\bigskip
	
	Suivant \cite{ODA}, le choix d'un point base géométrique $x\in (\M_{g,[m]})_{\bar{\QQ}}$ ainsi que d'un $\QQ$-point de $\M_{g,[m]}$ définit une action du groupe de Galois absolu :
	\begin{equation}\label{eq:SFE}
		\Gal(\overline{\QQ}/\QQ)\to \Aut[\pi_1^{alg}((\M_{g,[m]})_{\overline{\QQ}},x)].
	\end{equation}

	Le groupe de Galois $\Gq$ agit alors sur la classe de conjugaison $\cal G$ de $G$ dans $\Gamma_{g,[m]}$ définie par la composante irréductible $\cal M_{g,[m],\cal G}$ de $\M_{g,[m]}(G)_{\bar{\QQ}}$ à laquelle appartient $x$. En effet, pour tout $y\in \M_{g,[m],\cal G}$
	\[
	I_y\hookrightarrow \pi_1^{alg}((\M_{g,[m]})_{\bar{\QQ}},x)
	\]
	est donné par conjugaison dans $\Gamma_{g,[m]}$ suivant \cite[Rem. 3.5]{NOOHI04}.

	Dans le cas où on ne dispose pas d'un point base rationnel, on travaille avec un \emph{point base tangentiel} défini comme foncteur fibre sur la catégorie $\text{FEt}^D(\overline{\M}_{g,[m]})$ des revêtements étales finis de $\overline{\M}_{g,[m]}$ modérément ramifiés au dessus d'un diviseur à croisement normaux $D$, où  $\overline{\M}_{g,[m]}$ est la compactification de Deligne-Mumford et $D$ le complémentaire de $\M_{g,[m]}$ dans celle-ci. Ce foncteur est isomorphe à un foncteur fibre défini par un point géométrique de $\M_{g,[m]}$ et l'on dispose ainsi des notions de chemins entre points tangentiels ou géométriques, ainsi que de l'invariance du groupe fondamental par changement de corps algébriquement clos en caractéristique nulle -- voir \cite[\textsc{II}.2,\textsc{II}.4]{ZOO2001}. 
	
	Suivant \cite{NAKA99}, le choix d'un tel point base tangentiel $\overrightarrow b$ de $\M_{g,[m]}$ définit de plus une action galoisienne telle qu'en \eqref{eq:SFE} :
	\begin{equation}
		\textrm{Gal}(\overline{\QQ}/\QQ)\to \Aut[\pi_1^{alg}((\M_{g,[m]})_{\overline{\QQ}},\overrightarrow b)],
	\end{equation}
	où $\Gq$ agit à la fois sur l'ensemble des classes de conjugaison de $G$ (resp. des composantes irréductibles de $\M_{g,[m]}(G)_{\bar{\QQ}}$), ainsi que sur chacune de ces classes (resp. chacune des $\M_{g,[m],\cal G}(G)$). Le calcul de cette dernière action, qui constitue l'objectif principal de cet article, est menée du point de vue de l'action sur les composantes irréductibles de $\M_{g,[m]}(G)$. Ainsi, la stabilité de chacune des classes de conjugaison d'un groupe cyclique fini de $\pi_1^{geom}(\M_{g,[m]})$ sous l'action de $\Gq$ se réduit à la stabilité de chacune des composantes irréductibles du champ des lieux spéciaux $\M_{g,[m]}(G)_{\overline \QQ}$ d'après la proposition \ref{prop:irrIsconj}.
	
	\bigskip
	
	De ce point de vue, le groupe $\Gq$ agit par permutation sur l'ensemble des composantes irréductibles $\{\cal M_{g,[m],\cal G}\}_{\cal G}$ de $\cal M_{g,[m]}(G)$ (resp. l'ensemble des classes de conjugaison de $G$ dans $\Gamma_{g,[m]}$): pour $\sigma\in\Gq$, une composante $\cal M_{g,[m],\cal G}$ est envoyé sur une composante $\cal M_{g,[m],\cal G}^{\sigma}$. Par fonctorialité, cette action est compatible avec l'action définie précédemment sur le groupe fondamental. Le résultat suivant découle de la proposition \ref{prop:norm}.
	\begin{prop}\label{prop:galCompIrr}
		Le groupe de Galois absolu $\Gq$ agit de façon compatible sur l'ensemble des composantes irréductibles de $(\M_{g,[m]}[G]/\Aut(G))_{\overline \QQ}$ et de $\M_{g,[m]}(G)_{\overline \QQ}$.
	\end{prop}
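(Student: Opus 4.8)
Le plan est d'exploiter le caractère \emph{canonique}, donc rationnel, de la normalisation, la proposition \ref{prop:norm} rendant alors l'énoncé essentiellement formel. Je commencerais par remarquer que tous les champs et tous les morphismes de la proposition \ref{prop:norm} et du corollaire \ref{prop:irrSpecIrrHur} sont définis sur $\QQ$ : le champ des lieux spéciaux $\M_{g,[m]}(G)$ est un sous-$\QQ$-champ de $\M_{g,[m]}$, image du $\QQ$-champ $\M_{g,[m]}[G]$ par le morphisme d'oubli de l'action -- lequel, tout comme le quotient $\M_{g,[m]}[G]\to\M_{g,[m]}[G]/\Aut(G)$, est défini sur $\QQ$ --, de sorte que le morphisme de normalisation $\M_{g,[m]}[G]/\Aut(G)\to\M_{g,[m]}(G)$ de la proposition \ref{prop:norm} est un $\QQ$-morphisme.

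Il resterait alors à vérifier que la formation du normalisé commute au changement de base $\Spec\overline\QQ\to\Spec\QQ$ : on écrit $\overline\QQ$ comme limite filtrée de ses sous-extensions finies -- séparables, la caractéristique étant nulle --, et l'on utilise que la normalisation d'un champ de Deligne-Mumford commute au changement de base étale, donc à une telle limite. On en déduirait que $(\M_{g,[m]}[G]/\Aut(G))_{\overline\QQ}$ est le normalisé de $\M_{g,[m]}(G)_{\overline\QQ}$; en particulier $\Gq$ agit semi-linéairement sur ce normalisé et le morphisme de normalisation vers $\M_{g,[m]}(G)_{\overline\QQ}$ est $\Gq$-équivariant.

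Tout $\sigma\in\Gq$ induit alors un automorphisme de chacun des deux champs au dessus de l'automorphisme correspondant de $\Spec\overline\QQ$, et permute donc leurs composantes irréductibles; le morphisme de normalisation étant fini, surjectif et induisant une bijection entre composantes, la bijection du corollaire \ref{prop:irrSpecIrrHur} entre composantes de $(\M_{g,[m]}[G]/\Aut(G))_{\overline\QQ}$ et de $\M_{g,[m]}(G)_{\overline\QQ}$ serait $\Gq$-équivariante -- c'est la compatibilité annoncée. Pour la compatibilité avec l'action sur $\pi_1^{alg}$ de \eqref{eq:SFE}, j'invoquerais enfin la fonctorialité de la construction de \cite{ODA} vis-à-vis des $\QQ$-morphismes précédents, jointe aux plongements d'inertie $I_y\hookrightarrow\pi_1^{alg}((\M_{g,[m]})_{\overline\QQ})$ donnés par conjugaison dans $\Gamma_{g,[m]}$ d'après \cite[Rem. 3.5]{NOOHI04}. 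Le point réellement délicat est ainsi la descente à $\QQ$ de la normalisation, opération \emph{a priori} de nature géométrique : c'est exactement ce que fournit la proposition \ref{prop:norm} en en donnant une description canonique indépendante du corps de base.
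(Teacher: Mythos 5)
Votre argument est correct et suit essentiellement la m\^eme voie que l'article, qui d\'eduit la proposition de la canonicit\'e du normalis\'e (proposition \ref{prop:norm}) et de la fonctorialit\'e de l'action galoisienne. Vous ne faites qu'expliciter les d\'etails laiss\'es implicites dans le texte (rationalit\'e sur $\QQ$ du morphisme de normalisation, commutation de la normalisation au changement de base $\QQ\to\overline\QQ$ en caract\'eristique nulle, \'equivariance de la bijection du corollaire \ref{prop:irrSpecIrrHur}), ce qui est fid\`ele \`a l'esprit de la preuve de l'article.
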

	
	La question initiale de la stabilité d'une classe de conjugaison se réduit ainsi à celle des composantes irréductibles du champ $(\M_{g,[m]}[G]/\Aut(G))_{\overline \QQ}$ selon les propositions \ref{prop:irrSpecIrrHur} et \ref{prop:galCompIrr}.

	Une caractérisation des composantes irréductibles du champ $(\M_{g,[m]}[G]/\Aut(G))_{\overline \QQ}$ étant donnée en sections \ref{sec:brData} et \ref{sec:actionGal}, on établit qu'elles sont chacune définies sur $\QQ$ et donc leur invariance sous l'action de $\Gq$. Fixant une telle composante, on construit alors en section \ref{sec:actionGal} un $K$-point pour lequel on montre que le calcul de l'action de $\Gq$ sur le groupe d'inertie se réduit à celui de l'action de $\Gal(\bar K/K)$.

	\begin{rem}
		Le théorème 3.10 de \cite{Colg0} et le théorème 6.5 de \cite{Colg1} sur les classes de conjugaison des mapping class groups impliquent le théorème A respectivement en genre zéro et un et pour la torsion \emph{géométrique d'ordre premier}. 
	\end{rem}

	\section{Espaces des modules des courbes avec $G$-automorphisme, invariants de revêtements}\label{sec:brData}
	Soit $G$ un groupe cyclique fini. Le champs des courbes avec $G$-automorphisme $\mathcal M_{g, [m]}[G]$ n'étant en général pas irréductible, nous introduisons des sous-champs $\M_{g,[m],\underline{\bf kr}}[G]$ de $\mathcal M_{g, [m]}[G]$ à partir desquels nous caractérisons les composantes irréductibles de $\M_{g,[m]}[G]/\Aut(G)$ au paragraphe \ref{Irreductibilite}. 
	
	La construction de ces espaces repose sur les \emph{données de Hurwitz} associées à un $G$-revêtement cyclique ainsi que sur des \emph{données de branchement} dont nous donnons une construction algébrique dans le cas des espaces de modules de courbes \emph{à points marqués} et définis sur un corps algébriquement clos \emph{quelconque}.

	Cette approche nous permet de résoudre des questions de rationnalité dans le cas d'un corps quelconque (voir section \ref{Rationnalite}), étape nécessaire pour établir l'irréductibilité des espaces  $\M_{g,[m],\underline{\bf kr}}[G]/\Aut(G)$ dans la section \ref{Irreductibilite}.
	
	\subsection{Données de Hurwitz, une construction algébrique}
	Dans \cite{ COR87}, F.~Cornalba donne une construction des données de Hurwitz d'un $G$-revêtement pour les espaces de modules de courbes non-marquées et d'un point de vue complexe. La construction algébrique que nous donnons ici étend cette dernière au cas de points marqués et d'un corps de définition algébriquement clos quelconque. 
	Elle se place dans un contexte de cohomologie étale et repose sur une description géométrique bien connue du lieu de branchement d'un revêtement - voir par exemple \cite{BERO07} section 3.1. 
	
	\medskip
	
	Dans la suite, nous considérons $\mathcal X/S$ une courbe propre et lisse, munie d'une action $\iota\colon G \to \Aut_S(\mathcal X)$ d'un groupe cyclique d'ordre inversible sur $S$, et noterons $B \subset \mathcal Y=\mathcal X/G$ le diviseur de branchement.
	\begin{lem}\label{lem:nuCst}
		Soit $\mathcal X/S$ une courbe telle que ci-dessus. Alors la fonction $\nu$ comptant le nombre de points géométriques de branchement dans les fibres
		\begin{equation*}
			\begin{array}{cccc}
				\nu \colon & S & \to & \NN \\
				& s & \mapsto & \card \left(( B\times_S s)(\bar s)\right)
			\end{array} 
		\end{equation*}
		est localement constante.
	\end{lem}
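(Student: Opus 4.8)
The plan is to show that $\nu$ is locally constant by exhibiting it locally as a sum of fibrewise degrees of closed subschemes that are flat (indeed étale) over the base, since such degrees are automatically locally constant. Working étale-locally on $S$, I may assume $S = \Spec A$ with $A$ strictly henselian local, and even reduce to the case where $S$ is connected; the statement being local I only need to prove $\nu$ is constant in that situation. The key geometric input is the description of the branch divisor $B \subset \Y = \X/G$ recalled before the lemma: for a $G$-cover with $G$ cyclic of order invertible on $S$, $B$ is a Cartier divisor on $\Y$ which is finite over $S$, and the ramification is tame so that $B$ is in fact the image of the ramification divisor $R \subset \X$, itself a disjoint union of sections of $\X$ over the henselization (after a further étale base change splitting the monodromy).

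The main steps I would carry out are the following. First, reduce to $S$ strictly henselian as above and recall that, $|G|$ being invertible, the quotient $\Y = \X/G$ exists as a smooth proper curve over $S$ and $\X \to \Y$ is a tamely ramified $G$-cover; the branch locus $B$ is a relative effective Cartier divisor on $\Y/S$, hence finite flat over $S$. Second, after passing to a connected étale neighbourhood of each geometric point of $S$, I split $B$ (and the ramification divisor $R$ upstairs) into a disjoint union of sections $s_1,\dots,s_n \colon S \to \Y$; this uses tameness — the ramification indices are prime to the residue characteristics, so Abhyankar-type local analysis / Hensel's lemma separates the branch points. Third, with $B = \sum_i b_i\, [\,s_i(S)\,]$ expressed through sections, the number of geometric branch points in the fibre over $s$ is exactly $n$ — the number of sections — which does not depend on $s$, because the sections are globally defined over the connected base. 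Hence $\nu$ is constant on this neighbourhood, which is what local constancy means.

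Alternatively, and perhaps more cleanly, one avoids splitting $B$ by sections: $\nu(s)$ equals the number of connected components of the geometric fibre $B_{\bar s}$, and since $B$ is finite étale over $S$ away from... — no, $B$ need not be étale over $S$ in general, so I would instead use that $B_{\mathrm{red}} \to S$, or rather the normalization of $B$, is finite étale over a strictly henselian base and therefore a disjoint union of copies of $S$; counting those copies gives $\nu$ and is locally constant. I would present the argument via reduction to the strictly henselian connected case and the finite-étale structure of (the normalization of) the branch locus there.

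The main obstacle is step two: controlling the structure of $B$ over a non-algebraically-closed, possibly non-reduced base $S$, i.e.\ checking that after an étale base change the branch divisor genuinely decomposes into disjoint sections with the "right" multiplicities, uniformly in the fibre. This is where tameness (the order of $G$ being invertible on $S$) is essential — it is what makes the ramification divisor a disjoint union of sections étale-locally and forbids branch points from colliding or splitting under specialization — and it is the point at which I would invoke the local description of tame covers cited from the reference on the branch locus (\cite{BERO07}, §3.1), together with the standard fact that a finite étale scheme over a strictly henselian base is completely decomposed. Once that is in place the local constancy of $\nu$ is immediate, as a locally constant function factoring through $\pi_0$ of an étale cover.
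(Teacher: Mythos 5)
Your proposal is correct and rests on the same key input as the paper: tameness (the order of $G$ being invertible on $S$) forces the locus of branch points to be finite \'etale over $S$, after which local constancy of the fibrewise point count is automatic; like the paper, you defer precisely this structural point to \cite{BERO07}. The difference is in the mechanism. The paper works upstairs: by \cite{BERO07}, Lemme 3.3, each fixed-point scheme $\X^H$ (for $H\subset G$ non trivial) is a relative Cartier divisor \emph{\'etale} over $S$, and the support of $B$ is the image in $\Y$ of $\bigcup_{H\neq 1}\X^H$, hence again finite \'etale over $S$; no passage to strict henselizations or splitting into sections is needed --- your second and third steps are simply an unwinding of the fact that a finite \'etale $S$-scheme has locally constant fibre count. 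Your main route (local Kummer/Abhyankar description of the tame cover, then decomposition of the branch locus into disjoint sections over a strictly henselian connected base) is sound modulo exactly that citation, so the step you flag as the main obstacle is the one the paper discharges by quoting Bertin--Romagny. One caveat concerns your alternative: over a possibly non-reduced base $S$ the reduced scheme $B_{\mathrm{red}}$ can never be \'etale over $S$ (an \'etale scheme over a non-reduced base is itself non-reduced), and the normalization of $B$ is not a well-behaved notion in that generality either; the correct object carrying the \'etale structure is the image of the \'etale divisors $\X^H$, which is how the paper endows the support of $B$ with a scheme structure without any ad hoc reduction, and which also settles the non-collision of branch points under specialization that your section-splitting argument needs.
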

	
	La constance locale de la fonction $\nu$ montre que, étant donnée un entier $\nu_0\in \NN$, 
	le lieu de points de $\mathcal M_{g, [m]}$ où la fonction $\nu$ vaut $\nu_0$ est un sous- champs algébrique qu'on 
	notera $\mathcal M_{g, [m], \nu_0}$.

	\begin{proof}
		Ceci découle directement de \cite[Lemme 3.3]{BERO07}. En effet, puisque $G$ est d'ordre inversible sur $\O_S$, pour tout $H\subset G$ le schéma $\mathcal X^H$ des points fixes sous $H$ est naturellement un diviseur de Cartier relatif étale sur $S$. Il en est ainsi de même pour le support de $B$ qui est égal à l'image de $\cup_{\{0\} \not = H \subset G} \mathcal X^H$ dans $\mathcal Y$, d'où la constance de $\nu$.
	\end{proof} 
	
	C'est le fait que les groupes considérés ont des ordres premiers aux caractéristiques résiduelles qui impose cette géométrie du lieu de branchement particulièrement simple.
	
	\medskip
	
	Considérons une courbe $G$-équivariante $\X$ et notons $\Y=\X/G$, $R$ le diviseur de ramification, $B$ le diviseur de branchement et $\pi\colon (\mathcal X\setminus R) \to \mathcal Y\setminus B$ le morphisme quotient, ainsi que le diagramme
	\begin{equation}
		\label{DiagrammeOuvert}
		\xymatrix{
			\Y\setminus B \ar@{^(->}[r]^j \ar[rd]_f &  \Y\ar[d]^h \\
			& S.
		}
	\end{equation}
	Suivant la correspondance entre classes d'isomorphismes de $G$-torseurs sur $\Y\setminus B$ et éléments de $\H^1_\et( \Y\setminus B, G)$, le morphisme étale $\pi$ définit alors un élément de $\H^1_\et( \Y\setminus B, G)$. Ceci motive la définition suivante.
	
	\begin{defi}\label{defi:HurwData}
		Soit $(\mathcal{X},\iota)$ une courbe propre lisse munie d'une $G$ action définissant un élément $\pi\in \H^1_\et( \Y\setminus B, G)$ comme ci-dessus. On nomme donnée de Hurwitz associée à $(\mathcal{X},\iota)$ l'image de $\pi$ dans $\H^0_\et(S, h_* \R^1 j_* G)$ par les homomorphismes 
		\begin{equation*}
			\H^1_\et( \mathcal Y\setminus B, G) \to \H^0_\et(S, \R^1 f_* G) \to \H^0_\et(S, h_* \R^1 j_* G).
		\end{equation*}
	\end{defi}

	\medskip
	
	Décrivont explicitement les fibres du faisceau $h_* \R^1 j_* G$ à travers $\H^0_\et(\mathcal Y, \R^1 j_* G)$, puisque selon le théorème de changement de base propre $(h_* \R^1 j_* G)_s\simeq\H^0_\et(\mathcal Y_s, \R^1 j_* G)$ pour $s\in S$.
	
	\begin{lem}\label{lem:faisc}
		Avec les notations de \eqref{DiagrammeOuvert} ci-dessus, le faisceau $h_* \R^1 j_* G$ est localement constant et la fibre en un point géométrique $s \in S$ est isomorphe à $\bigoplus_{y \in B_s} \left(\ZZ/n\ZZ\right)$.
	\end{lem}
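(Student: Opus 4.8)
First I would reduce the statement to a purely local computation on $\mathcal Y$. Since $j\colon \mathcal Y\setminus B\hookrightarrow\mathcal Y$ is the open complement of the relative divisor $B$, the sheaf $\R^1 j_* G$ is supported on $B$, so its stalk at a point $y\in\mathcal Y$ is trivial if $y\notin B$. It therefore suffices to compute $(\R^1 j_* G)_{\bar y}$ for $y\in B$, and to show that this stalk is $\ZZ/n\ZZ$, where $n=|G|$. The standard Abhyankar–type computation for tame cohomology of a punctured henselian trait (or punctured henselian disc in the relative setting) gives, after passing to the strict henselisation at $\bar y$, that $\H^1_{\et}$ of the punctured local scheme with coefficients in $G$ is identified with the group of characters of the tame fundamental group, i.e.\ with $\Hom(\hat\ZZ(1),G)\simeq G\simeq\ZZ/n\ZZ$ since $G$ is cyclic of order $n$ invertible on $S$ (the hypothesis of the section). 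This uses precisely the "géométrie du lieu de branchement particulièrement simple" emphasised after Lemme \ref{lem:nuCst}: $\supp(B)$ is étale over $S$, so locally over $S$ it is a disjoint union of sections and each branch is a smooth relative divisor, which is what makes the tame local computation uniform.

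Next I would assemble the stalks. Proper base change (already invoked in the excerpt for $h_*\R^1 j_* G$) is not needed here; rather, since $\R^1 j_* G$ is a constructible sheaf supported on $B$ with all stalks isomorphic to $\ZZ/n\ZZ$, and since $B\to S$ is finite étale with $\supp(B)$ smooth over $S$, the sheaf $\R^1 j_* G$ is the pushforward along $B\hookrightarrow\mathcal Y$ of a locally constant sheaf on $B$ with fibre $\ZZ/n\ZZ$. Taking $h_*$ and using the base change isomorphism $(h_*\R^1 j_* G)_s\simeq\H^0_{\et}(\mathcal Y_s,\R^1 j_* G)$ quoted just before the lemma, we get
\[
(h_*\R^1 j_* G)_s\simeq\H^0_{\et}(\mathcal Y_s,\R^1 j_* G)\simeq\bigoplus_{y\in B_s}(\R^1 j_* G)_{\bar y}\simeq\bigoplus_{y\in B_s}\ZZ/n\ZZ,
\]
the middle isomorphism because $\R^1 j_* G|_{\mathcal Y_s}$ is a skyscraper sheaf supported on the finite set $B_s$ (here $B_s$ denotes the geometric fibre, whose cardinality is the locally constant function $\nu$ of Lemme \ref{lem:nuCst}). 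Local constancy of $h_*\R^1 j_* G$ then follows: by Lemme \ref{lem:nuCst} the number $\nu(s)=|B_s|$ is locally constant, and on a connected base where $\nu$ is constant the finite étale scheme $\supp(B)$ has constant degree, so the locally constant sheaf $\R^1 j_* G$ on it has pushforward with locally constant fibres; concretely one reduces to the case $\supp(B)=\coprod S$ (a disjoint union of $\nu_0$ copies of $S$) after an étale base change on $S$, where the computation is manifestly constant.

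The main obstacle is the local tame computation of $(\R^1 j_* G)_{\bar y}$, i.e.\ ensuring that the comparison with $\ZZ/n\ZZ$ holds in the relative setting and not merely over a field. One has to work on the strict henselisation of $\mathcal Y$ at $\bar y$: this is a henselian local ring, regular of dimension $\dim S+1$, and $B$ is cut out near $\bar y$ by a single coordinate $t$ (because $\supp(B)$ is a smooth relative divisor). Removing $\{t=0\}$ and computing $\H^1$ with coefficients in the constant group $G$ of order invertible on $S$, the Kummer sequence $1\to\mu_n\to\GGm\xrightarrow{n}\GGm\to 1$ together with $\Pic$ and $\H^0$ of the punctured henselian local scheme being trivial (resp.\ generated by $t$) yields $\H^1_{\et}=\mu_n\simeq\ZZ/n\ZZ$ after choosing a trivialisation of roots of unity; one must check the purity/excision input that makes $\H^i$ of the punctured scheme agree with the cohomology computed over the closed fibre, which is exactly Grothendieck's tame purity in the relative situation and is available because $n$ is invertible. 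Once this local identification is secured, everything above is bookkeeping.
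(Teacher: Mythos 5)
Votre preuve est correcte, mais elle suit une route sensiblement différente de celle du texte. Vous travaillez directement dans la situation relative : calcul des fibres de $\R^1 j_* G$ par hensélisation stricte le long du support de $B$, identification $\R^1 j_*\mmu_n\simeq i_*(\ZZ/n\ZZ)_B$ via la pureté relative (Abhyankar/SGA~4 XVI 3.7, licite car le support de $B$ est un diviseur étale sur $S$ dans la courbe lisse $\Y/S$ et $n$ est inversible), puis image directe le long du morphisme fini étale $\supp(B)\to S$, ce qui donne d'un seul coup la constance locale de $h_*\R^1 j_* G$ et la description des fibres. Le texte procède autrement : il se ramène d'abord, par changement de base propre, au cas o\`u $S$ est le spectre d'un corps algébriquement clos, puis écrit la suite exacte globale de Kummer $1\to j_*\mmu_n\to j_*\GGm\to j_*\GGm\to \R^1 j_*\mmu_n\to 1$ sur la courbe (la surjectivité venant d'EGA~IV 21.6.9 par lissité) et identifie le conoyau à $\bigoplus_{y\in B} i_{y*}\ZZ/n\ZZ$ par les valuations normalisées aux points de branchement. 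Chaque approche a son intérêt : la vôtre évite la réduction aux fibres, rend la constance locale transparente au niveau des faisceaux, mais utilise la pureté relative comme boîte noire ; celle du texte n'utilise que des arguments élémentaires sur une courbe sur un corps et fournit surtout la description explicite par valuations (ordre d'annulation de $\alpha$ pour un revêtement $w^n=\alpha$) qui est réutilisée ensuite pour décrire les données de Hurwitz et l'action de $\Aut(G)$ (remarques 3.5 et 3.6).

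Un seul point de rédaction à resserrer chez vous : l'énoncé \og faisceau constructible à support dans $B$ dont toutes les fibres sont isomorphes à $\ZZ/n\ZZ$\fg{} n'implique pas formellement qu'il soit de la forme $i_*$ d'un faisceau localement constant sur $B$ ; c'est bien le théorème de pureté que vous invoquez qui fournit l'isomorphisme au niveau des faisceaux (et non seulement fibre à fibre), il faut donc le citer sous cette forme. Moyennant cette précision, votre argument est complet.
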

	Notons que le théorème de changement de base lisse en cohomologie étale implique que le faisceau $h_* \R^1 j_* G$ est constructible. Le résultat découle ici d'une description explicite du faisceau.
	
	\begin{proof}
		Considérons le spectre $S$ d'un corps algébriquement clos et fixons un isomorphisme $G \isomto \mmu_n$. On montre alors que la suite exacte de Kummer donne une suite exacte
		\[
		1\to j_*\mmu_n\to j_*\GG_m\to j_*\GG_m\to \R^1j_*\mmu_n\to 1,
		\]
		la surjectivité de la dernière flèche provenant de \cite{EGA4}[21.6.9] par lissité de la courbe $\cal Y$.
		D'autre part, le morphisme 
		\begin{equation*}
			\begin{array}{ccc}
				j_* \GG_m & \to & \displaystyle\bigoplus_{y \in B} i_{y*} \ZZ/n\ZZ \\
				\alpha & \mapsto & (v_y(\alpha))_y
			\end{array} 
		\end{equation*}
		où $i_y$ désigne l'inclusion $y \to \mathcal Y$ et $v_y$ la valuation normalisée sur l'anneau local $\O_{\mathcal Y, y}$, est nul sur les puissances $n$-ième. Ceci induit un isomorphisme
		\begin{equation*}
			\R^1 j_* \mmu_n \isomto  \displaystyle\bigoplus_{y \in B} i_{y*} \ZZ/n\ZZ \\
		\end{equation*}
		puis l'isomorphisme recherché
		\begin{equation}
			\label{IsomHurwitz}
			\H^0_\et(\mathcal Y, \R^1 j_* G) \isomto \bigoplus_{y \in B_s} \left(\ZZ/n\ZZ\right).
		\end{equation}
	\end{proof}
	
	\begin{rem}\label{rem:nonCan}
		Il est important de noter \emph{la non-canonicité} de l'isomorphisme \eqref{IsomHurwitz}. En effet, les changements d'isomorphisme $G \isomto \mmu_n$ induisent par fonctorialité une action de $\Aut(G)$ sur le membre de gauche de l'équation \ref{IsomHurwitz}\footnote{Les deux auteurs remercient chaleureusement le referee pour son insistance sur ce point.}.
	\end{rem}

	\begin{defi}\label{defi:DataHurwitz}
		Soit $G$ un groupe cyclique d'ordre $n$.
		Une donnée de Hurwitz géométrique abstraite $ {\bf k}$ pour $G$ à $\nu$ points de branchement est un élément
		$ {\bf k} \in \left(\ZZ/n\ZZ\right)^\nu/\mathfrak S_\nu$, tel que la somme des coordonnées est nulle -- l'action de $ \mathfrak S_\nu$ se faisant par permutation des coordonnées.
	\end{defi}
	
	Étant donné un $G$-revêtement et un isomorphisme $G\isomto\mmu_n$ fixé, pour tout point géométrique $s\in S$, on associe un élément de $\H^0_\et(\mathcal Y_s, \R^1 j_* G)$ suivant la définition \ref{defi:HurwData}, et qui s'identifie canoniquement à $\bigoplus_{y \in B_s} \left(\ZZ/n\ZZ\right)$ d'après le lemme \ref{lem:faisc}. Le quotient par $\mathfrak{S}_{\nu}$ fournit alors une donnée de Hurwitz abstraite. 
	
	\begin{rem}\label{rem:ConstAndCano}\mbox{}
		\begin{enumerate}[label=\roman*),ref=\roman*)] 
			\item \emph{Composantes connexes --}\label{ConstanceLocaleDonneeHurwitz}
			Étant donnés un corps algébriquement clos $k$, un isomorphisme $G  \isomto \mmu_n$ et une donnée de Hurwitz géométrique abstraite à $\nu$ points de branchements, on définit, pour tout $k$-schéma $S$ le lieu des points $s$ de $S$ tel que $(\mathcal X, \iota)$ possède $ {\bf k}$ comme donnée de Hurwitz géométrique abstraite en $s$. La constance locale du faisceau $h_* \R^1 j_* G$ implique que celui-ci est ouvert et fermé dans $S$ : c'est une réunion de composantes connexes de $S$.
			
			\item\label{rem:choixCano} \emph{Données canoniques --}
			Un $\mmu_n$-torseur étant localement donné par $w^n=\alpha$, le membre de droite de l'équation \eqref{IsomHurwitz} est constitué de l'ordre des zéros et des pôles de $\alpha$ en chaque point de $B$ modulo $n$. L'action de $\Aut(G)$ y est donnée par multiplication sur $\div(\alpha)$ après l'identification canonique $\Aut(G)=(\ZZ/n\ZZ)^*$. L'indépendance du choix d'un isomorphisme $\bigoplus_{y \in B_s} \ZZ/n\ZZ \isomto \left(\ZZ/n\ZZ\right)^\nu$ est alors donné par double passage au quotient, par $\mathfrak S_\nu$ et $(\ZZ/n\ZZ)^*$.
		\end{enumerate}
	\end{rem}
	
	\medskip
	
	Réciproquement, pour une donnée de Hurwitz abstraite $\bf  k$ et un isomorphisme $G\simeq \mmu_n$ fixés, la proposition suivante produit un $G$-revêtement de données de Hurwitz $\bf k$.
	
	\begin{prop}
		Soit $\Y$ courbe propre et lisse sur $k$ algébriquement clos, $G$ un groupe fini cyclique, $G\simeq \mmu_n$ un isomorphisme fixé,  et $\bf k$ une donnée de Hurwitz géométrique abstraite pour $G$. Alors il existe un revêtement galoisien $\X/\Y$ de groupe $G$ et de donnée de Hurwitz $\bf k$.
	\end{prop}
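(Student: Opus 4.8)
The plan is to reconstruct the cyclic cover directly from the abstract Hurwitz datum $\bf k = (k_1,\dots,k_\nu)$ via Kummer theory, using that all the $k_i$ sum to zero in $\ZZ/n\ZZ$. First I would choose $\nu$ distinct closed points $y_1,\dots,y_\nu$ of $\Y$ — which is possible since $k$ is infinite (algebraically closed), or one can reduce to that case; these will form the branch divisor $B$. The condition $\sum_i k_i \equiv 0 \pmod n$ is exactly what is needed for the divisor $\sum_i k_i [y_i]$ to be linearly equivalent, after adding a suitable multiple of a fixed divisor, to something whose class is $n$-divisible in $\Pic(\Y)$. More precisely I would look for a rational function $\alpha \in k(\Y)^*$ together with a divisor $E$ such that $\div(\alpha) = \sum_i k_i[y_i] + nE$; the $\mmu_n$-torsor over $\Y\setminus B$ defined by $w^n = \alpha$ then extends to a connected smooth Galois cover $\X\to\Y$ with group $\mmu_n\cong G$, branched exactly over the $y_i$, and by Remark~\ref{rem:ConstAndCano}\eqref{rem:choixCano} its Hurwitz datum reads off the orders of zeros and poles of $\alpha$ along $B$ modulo $n$, i.e.\ precisely $\bf k$.

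The key step is therefore to produce such an $\alpha$. Since $\sum_i k_i\equiv 0\pmod n$, the divisor $\sum_i k_i[y_i]$ has degree divisible by $n$, say $n d$; subtracting $nd[y_0]$ for an auxiliary point $y_0$ gives a degree-zero divisor $D_0 = \sum_i k_i[y_i] - nd[y_0]$. Its class $[D_0]\in \Pic^0(\Y)$ need not be $n$-divisible, but $\Pic^0(\Y)(k)$ is a divisible group (it is the $k$-points of an abelian variety over an algebraically closed field), so there exists $[D_1]\in\Pic^0(\Y)(k)$ with $n[D_1] = [D_0]$. Choosing a representative divisor $D_1$ and setting $E := -D_1 - d[y_0]$, the divisor $\sum_i k_i[y_i] + nE = D_0 + nd[y_0] - n D_1 - n d[y_0] = D_0 - nD_1$ is principal, hence equals $\div(\alpha)$ for some $\alpha\in k(\Y)^*$. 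This $\alpha$ is not a nonzero $n$-th power (its valuations at the $y_i$ are the $k_i$, not all $\equiv 0\pmod n$, provided the $k_i$ are genuinely nonzero — if some $k_i=0$ we simply drop that point from $B$), so the Kummer class of $\alpha$ in $\H^1_\et(\Y\setminus B,\mmu_n)$ is nontrivial and the associated cover is connected.

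Finally I would check that the resulting $\X\to\Y$ is the covering attached to a point of $\M_{g,[m]}[G]$ with the prescribed datum: $\X$ is smooth and proper because $\Y$ is and the cover is Kummer (tamely ramified, $n$ invertible on $k$), the $\mmu_n$-action on $w$ gives the faithful $G$-action, and the branch locus is exactly the support of $\sum k_i[y_i]$. Running the construction of Definition~\ref{defi:HurwData} backwards — using the explicit description \eqref{IsomHurwitz} of $\R^1 j_*\mmu_n$ through valuations — shows that the Hurwitz datum of $(\X,\iota)$ is the class of $(k_1,\dots,k_\nu)$ in $(\ZZ/n\ZZ)^\nu/\mathfrak S_\nu$, which is $\bf k$. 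The main obstacle is really the bookkeeping in the previous paragraph: getting the degrees and the $n$-divisibility in $\Pic(\Y)$ to line up so that $\alpha$ has \emph{exactly} the prescribed valuations at $B$ and orders divisible by $n$ everywhere else; the divisibility of $\Pic^0$ of a curve over an algebraically closed field is what makes it work, and one must be a little careful when the genus of $\Y$ is $0$ (where $\Pic^0$ is trivial and the degree condition alone suffices) versus positive genus.
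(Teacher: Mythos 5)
Your construction is correct, but it follows a genuinely different (and more explicit) route than the paper. The paper obtains existence abstractly: writing the Leray exact sequence for $j$ and $f=h\circ j$ and using that $\Y\setminus B$ is affine when $\nu>0$ (so $\R^2 f_* G=0$), it gets the exact sequence \eqref{LocalGlobalLeray}, identifies the last nonzero map with $(k_i)_i\mapsto\sum_i k_i$ via \eqref{IsomHurwitz}, and concludes that any tuple with zero sum lifts to a class in $\H^1_\et(\Y\setminus B, G)$, i.e.\ to a $G$-torsor over $\Y\setminus B$, which furnishes the cover. You instead realize this lift by hand: choosing the $\nu$ branch points, lifting the $k_i$ to integers and using the divisibility of $\Pic^0(\Y)(k)$ to produce $\alpha$ with $\div(\alpha)=\sum_i \tilde k_i y_i + nE$, so that $w^n=\alpha$ has exactly the prescribed valuations along $B$. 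This is essentially the argument the paper itself deploys later, in the proof of the th\'eor\`eme \ref{DescenteEquivariante} (surjectivity of the arrow $(2)$, via an $n$-th root of $\O_{\Y}(\sum_i \tilde k_i y_i)$ in $\Pic^0$), so your proof is a concrete unfolding of the cohomological one; what the paper's formulation buys is the sheaf-theoretic framework that is reused over a non-closed field in section \ref{Rationnalite}, while yours gives the cover by an explicit Kummer equation. Two minor caveats: your side remark that a nontrivial Kummer class forces the cover to be connected is inaccurate (one would need the class to have exact order $n$), but neither the statement nor the paper's own proof requires connectedness, so nothing is lost; and dropping the points with $k_i=0$ from $B$ is consistent with the paper's convention that the datum is zero where the cover is \'etale.
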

	
	\begin{proof}
		Le morphisme $\R^1 f_* G \to h_* \R^1 j_* G$ déjà utilisé ci-dessus s'inscrit dans une suite exacte 
		longue donnée par la suite spectrale de Leray :
		\begin{equation*}
			0 \to \R^1 h_* (j_* G) \to \R^1 f_* G \to h_* \R^1 j_* G \to \R^2 h_* (j_* G) \to \R^2 f_* G. 
		\end{equation*}

		Si $\nu = 0$ alors la donnée de Hurwitz est vide, on peut donc supposer $\nu > 0$. 
		Ainsi $f$ est affine et donc $\R^2 f_* G=0$. 
		Comme $j_* G\cong G$ et que tout les faisceaux de cette suite sont finis sur $S$, si ce dernier est 
		le spectre d'un corps algébriquement clos on trouve une suite exacte
		\begin{equation}
			\label{LocalGlobalLeray}
			\H^0_\et(S, \R^1 f_* G) \to \H^0_\et(S, h_* \R^1 j_* G) \to \H^0_\et(S, \R^2 h_* (j_* G)) \to 0
		\end{equation}
		la dernière flèche non nulle s'identifiant à
		\begin{equation*}
			\begin{array}{ccc}
				(\ZZ/n\ZZ)^\nu & \to & \ZZ/n\ZZ \\
				(k_i)_i & \mapsto & \sum_i k_i.
			\end{array}
		\end{equation*}

		En particulier, étant donné ${\bf k}$ dans le noyau de cette application, il existe 
		$a \in \H^0_\et(S, \R^1 f_* G)$ qui est envoyé sur ${\bf k}$. Or, comme $S$ est strictement hensélien, on a 
		\begin{equation*}
			\H^0_\et(S, \R^1 f_* G)=\H^1_\et(\mathcal Y\setminus B, G). 
		\end{equation*}

		À l'élément $a$ correspond donc un $G$-torseur au-dessus de $\mathcal Y\setminus B$ qui fournit le revêtement $\X/\Y$ cherché.
	\end{proof}
	
	\begin{rem}
		Au dessus d'un corps $k$ algébriquement clos, la théorie de Kummer donne une description explicite des données de Hurwitz, puisqu'un $G$-revêtement $\X \to \Y$ de diviseur de branchement $B$ est localement donné par une équation $w^n=\alpha$, où $\alpha$ est une fonction définie sur un ouvert de $\Y \setminus B$. Pour $y \in B$ la donnée de Hurwitz du revêtement $X \to Y$ en $y$ est l'ordre d'annulation de $\alpha$ en $y$ modulo $n$ comme dans la preuve du lemme 3.3 (en particulier, la ramification est incluse dans le support de $\textrm{div}(\alpha)$ mais ne lui est pas égale en général).

		On retrouve en particulier le résultat de classification des $G$-revêtements établi par F.~Cornalba sur $\mathbb C$.
	\end{rem}

	\subsection{Un sous-champ algébrique de $\M_{g,[m]}[G]$}
	\label{EspaceModule}
	
	On donne ici la construction de sous-champs $\mathcal M_{g, [m], \underline {\bf kr}}[G]$ de $\M_{g,[m]}[G]$ à partir desquels nous caractérisons les composantes irréductibles de $ \M_{g,[m]}[G]$ dans la section suivante. Ces sous-champs sont construits à partir de données de Hurwitz géométrique ${\bf k}$ de la section précédente, et de branchement $\bf kr$ définie ci-dessous.
	
	\medskip
	
	Dans la suite, on identifie $\NN^n$ à $\NN^{\ZZ/n\ZZ}$ afin de le munir de l'action naturelle de $\Aut(G)$ canoniquement identifié à $(\ZZ/n\ZZ)^*$.
	
	\begin{defi} 
		Soit l'application 
		\[{\bf kr} \colon \mathcal M_{g, [m], \nu_0}[G] \longrightarrow \left(\ZZ/n\ZZ\right)^{\nu_0}/\mathfrak{S}_{\nu_0}\times\NN^{n}
		\] 
		qui à :
		\begin{itemize}
			\item un point géométrique $s$, i.e. un couple $(\mathcal X, \iota)$ muni d'un diviseur équivariant $D$ de degré $m$ ;
			\item un isomorphisme $G\simeq \mmu_n$ ;
		\end{itemize}
		associe le couple $({\bf k}(s),{\bf r}(s))$ où ${\bf k}(s)$ est une donnée de Hurwitz géométrique et ${\bf r}(s)$ un $n$-uplet de $i$-ème coordonnée  
		\begin{equation*}
			{\bf r}(s)_i=\#\{y \in D_{s}/G,\ br(y)\text{ est égal à } i \mod n\}, 
		\end{equation*}
		où $br(y)$ désigne l'ordre de branchement en $y$ -- l'ordre de branchement est dit nul modulo $n$ si le morphisme est étale au-dessus du point considéré. 
		
		On nomme \emph{donnée de branchement} l'image $\underline{ \bf kr}={\bf kr} \mod (\ZZ/n\ZZ)^*$ sous l'action diagonale.
	\end{defi}
	
	On rappelle que l'action de $(\ZZ/n\ZZ)^*$ sur le facteur $\left(\ZZ/n\ZZ\right)^{\nu_0}$ est donnée par 
	la multiplication.

	Notons que la donnée de branchement $\underline{\bf kr}$ est indépendante du choix d'un ismorphisme $G\simeq \mmu_n$ -- voir remarque \ref{rem:ConstAndCano} - \ref{rem:choixCano}. Les sous-champs qui nous occupent sont alors les suivants.
	\begin{defi}
		\label{DefCompIrred}
		Soient $g, m, g', \nu_0 \in \NN$ des entiers, $G$ un groupe cyclique d'ordre $n$ inversible dans $S$ et $\underline {\bf kr}$ une donnée de branchement abstraite. On note $\mathcal M_{g, [m], \underline {\bf kr}}[G]$ le lieu des points $(\mathcal X/S, \iota, D)$ de $\M_{g,[m]}[G]$ munis d'un diviseur $G$-équivariant étale $D$ de degré $m$, et tel que
		\begin{enumerate}[label=\roman*),ref=\roman*)]
			\item $\mathcal Y=\mathcal X/G$ est une courbe propre et lisse de genre $g'$ ;
			\item en tout point géométrique $s \to S$ le morphisme $\mathcal X_{s} \to \mathcal Y_{s}$
			est ramifié au-dessus d'exactement $\nu_0$ points ;
			\item la donnée de branchement est en tout point égale à $\underline {\bf kr}$.
		\end{enumerate}
	\end{defi}
	Notons que $g'$ est omis de la notation $\mathcal M_{g, [m], \underline {\bf kr}}[G]$ puisqu'entièrement déterminé par les autres paramètres.
	
	\bigskip
	
	Les définitions de $\bf k$ et $\bf r$ qui composent la donnée de branchement $\underline{\bf kr}$ s'illustrent explicitement dans le cas des courbes de quotient de genre $0$.
	\begin{exem}
		Soit $G$ cyclique d'ordre $n$ et considérons $\X \in \cal M_{g,[m]}[G]$, courbe de genre $g$ munie d'une action de $G$ et d'un diviseur $G$-équivariant $D$, dont on suppose que le quotient $\X/G$ est de genre $g'=0$. Notant $\pi\colon\X\rightarrow \mathbb P^1$ le $G$-torseur associé, $\pi$ est donnée par une équation $w^n=\alpha$ entre paramètres. 
		
		Supposons fixé un isomorphisme $G\simeq \mmu_n$. Afin de préciser la définition, supposons que $n\neq 3$ et $n\neq 2$, que $\alpha$ se factorise en $\alpha(x)=(x-1)^3(x+1)^2$ sur $\bar{\QQ}$, et que $D$ est donné par le marquage des points $\pi^{-1}(\{1,2\})$ dans $\X$. Alors, le point $1$ (resp. $2$) de $D/G=\{1,2\}$ donne $r_3=1$ (resp. $r_0=1$) tandis que $r_i=0$ pour $i\notin \{0,3\}$ dans la donnée $\bf r$, et le revêtement étant branché au dessus de $\{1,-1,\infty\}$, on a $\nu_0=3$ avec $\mathbf{k}=(3,2,-5)\in(\ZZ/n\ZZ)^2$.
	\end{exem}
	
	\begin{prop}\label{prop:irrIsStack}
		Sous les conditions de genre, d'ordre et de donnée de branchement, l'espace $\M_{g,[m],\underline{\bf kr}}[G] $ est un sous-champ algébrique de $\M_{g,[m]}[G]$ défini sur $\QQ$, réunion de composantes connexes de $\M_{g,[m]}[G]$. 
	\end{prop}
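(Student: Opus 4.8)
\textit{Approche.} Le plan consiste \`a \'etablir les trois assertions -- structure de sous-champ alg\'ebrique, r\'eunion de composantes connexes, et d\'efinition sur $\QQ$ -- les deux premi\`eres simultan\'ement par un argument de constance locale, la derni\`ere par descente galoisienne.

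Je commencerais par montrer que la donn\'ee de branchement $\underline{\bf kr}$ est localement constante sur $\M_{g,[m],\nu_0}[G]$, c'est-\`a-dire qu'au-dessus d'un sch\'ema de base connexe elle ne d\'epend pas du point g\'eom\'etrique choisi. La condition $\nu\equiv\nu_0$ \'etant ouverte et ferm\'ee d'apr\`es le lemme \ref{lem:nuCst}, on peut travailler dans $\M_{g,[m],\nu_0}[G]$. Le lieu o\`u la composante de Hurwitz prend une valeur abstraite fix\'ee y est ouvert et ferm\'e d'apr\`es la remarque \ref{rem:ConstAndCano} - \ref{ConstanceLocaleDonneeHurwitz} (constance locale du faisceau $h_*\R^1 j_* G$, lemme \ref{lem:faisc}). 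Pour la composante $\bf r$, j'utiliserais que $\mathcal X^H$ est un diviseur de Cartier relatif \'etale pour tout $H\subset G$, comme dans la preuve du lemme \ref{lem:nuCst}, de sorte que le support de $B$ et le quotient $D/G$ sont \'etales sur $S$ et que l'ordre de branchement $br(y)$ en $y\in D/G$ -- lu sur l'\'equation de Kummer locale, ou sur la fibre de $\R^1 j_* G$ -- est constant le long des composantes connexes ; chaque $\mathbf r(s)_i$ est ainsi localement constant, et le passage au quotient par $(\ZZ/n\ZZ)^* = \Aut(G)$ se faisant sur un ensemble discret, $\underline{\bf kr}$ est localement constante. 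Par cons\'equent $\M_{g,[m],\underline{\bf kr}}[G]$, lieu o\`u $\underline{\bf kr}$ vaut une valeur donn\'ee et o\`u le genre du quotient vaut le $g'$ prescrit -- cette derni\`ere condition \'etant impos\'ee par la formule de Riemann-Hurwitz, donc automatique --, est ouvert et ferm\'e dans $\M_{g,[m]}[G]$ : c'est une r\'eunion de composantes connexes, qui h\'erite en particulier de la structure de champ alg\'ebrique de Deligne-Mumford.

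Reste \`a descendre \`a $\QQ$. Comme $\M_{g,[m]}[G]$ est un $\QQ$-champ, $\Gq$ permute les composantes connexes de $\M_{g,[m]}[G]_{\overline\QQ}$, et il suffit de v\'erifier que la r\'eunion $\M_{g,[m],\underline{\bf kr}}[G]_{\overline\QQ}$ est stable sous cette action. Pour $\sigma\in\Gq$ et un $\overline\QQ$-point $(\mathcal X,\iota,D)$, le conjugu\'e $(\mathcal X^\sigma,\iota^\sigma,D^\sigma)$ poss\`ede le m\^eme nombre $\nu_0$ de points de branchement g\'eom\'etriques et le m\^eme genre de quotient $g'$, ces entiers \'etant des invariants du rev\^etement \'etale insensibles au corps de base. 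La donn\'ee de branchement elle-m\^eme est construite \`a partir de la cohomologie \'etale \`a coefficients dans $\mmu_n$ apr\`es le choix auxiliaire $G\simeq\mmu_n$ ; conjuguer par $\sigma$ revient \`a tordre ce choix par le caract\`ere cyclotomique $\chi(\sigma)$, qui agit sur $\bf k$ par multiplication et sur $\bf r$ par la permutation correspondante de $\NN^n = \NN^{\ZZ/n\ZZ}$ -- pr\'ecis\'ement l'action diagonale de $(\ZZ/n\ZZ)^*$. Le passage \`a $\underline{\bf kr} = {\bf kr}\mod (\ZZ/n\ZZ)^*$ annule donc cette torsion : $\underline{\bf kr}(\mathcal X^\sigma) = \underline{\bf kr}(\mathcal X)$, le sous-champ est $\Gq$-stable, et par descente galoisienne des sous-champs ouverts et ferm\'es il est d\'efini sur $\QQ$.

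\textit{Difficult\'e principale.} C'est le dernier point : il faut s'assurer que la torsion cyclotomique acquise par la donn\'ee de Hurwitz $\bf k$ sous l'action galoisienne co\"incide exactement avec l'ambigu\"it\'e $\Aut(G)$ d\'ej\`a quotient\'ee dans la d\'efinition de $\underline{\bf kr}$, de sorte qu'aucun invariant plus fin que la donn\'ee de branchement ne subsiste pour obstruer la descente. Tout le reste se ram\`ene, via les lemmes de cette section, \`a la constance locale du faisceau constructible $h_*\R^1 j_* G$ et des diviseurs \'etales de branchement et de marquage.
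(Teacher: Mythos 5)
Your proposal is correct and follows essentially the paper's own route: the decomposition into connected components is obtained exactly as in the paper from the local constancy of $\nu$ (lemme \ref{lem:nuCst}) and of the branch datum (your argument for $\mathbf r$ is a variant of the paper's lemme \ref{r_constante}, which proceeds via tameness and the etaleness of $D$), with the quotient genus $g'$ determined by Riemann--Hurwitz, as the paper itself observes, so that the citation of Maugeais for the $g'$-locus is bypassed but nothing essential changes. Your explicit Galois-descent step -- the cyclotomic twist of $(\mathbf k,\mathbf r)$ being absorbed by the diagonal quotient by $\Aut(G)=(\ZZ/n\ZZ)^*$ -- simply makes explicit what the paper draws from the independence of the choice $G\simeq\mmu_n$ (remarque \ref{rem:ConstAndCano} - \ref{rem:choixCano}).
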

	
	La structure de champ de $\mathcal M_{g, [m], \underline {\bf kr}}[G]$ découle en partie du lemme suivant.
	\begin{lem}\label{r_constante}
		La fonction $\underline{\bf kr}$ est localement constante.
	\end{lem}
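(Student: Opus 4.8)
Le plan est de ramener la constance locale de $\underline{\bf kr}$ \`a celle de ses deux composantes ${\bf k}$ et ${\bf r}$, trait\'ees s\'epar\'ement. La propri\'et\'e \'etant locale sur la base et un isomorphisme $G\isomto\mmu_n$ existant \'etale-localement (puisque $n$ est inversible), on se ram\`ene \`a une famille $(\X/S,\iota,D)$ au-dessus d'un sch\'ema $S$ muni d'un tel isomorphisme ; il suffira alors de montrer que la fonction ${\bf kr}=({\bf k},{\bf r})$ ainsi d\'efinie est localement constante, la fonction $\underline{\bf kr}={\bf kr}\bmod(\ZZ/n\ZZ)^*$ s'en d\'eduisant aussit\^ot, l'action de $(\ZZ/n\ZZ)^*$ se faisant sur un ensemble discret par un groupe fini.

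Pour la composante de Hurwitz ${\bf k}$, il n'y a rien de nouveau : la remarque \ref{rem:ConstAndCano} - \ref{ConstanceLocaleDonneeHurwitz} affirme d\'ej\`a que le lieu o\`u la donn\'ee de Hurwitz g\'eom\'etrique prend une valeur abstraite fix\'ee est ouvert et ferm\'e, autrement dit que ${\bf k}$ est localement constante -- c'est une cons\'equence de la constance locale du faisceau $h_*\R^1 j_* G$ (lemme \ref{lem:faisc}) et du caract\`ere canonique, apr\`es quotient par $\mathfrak S_{\nu_0}$, de l'identification de sa fibre (remarque \ref{rem:ConstAndCano} - \ref{rem:choixCano}).

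L'essentiel du travail portera sur la composante ${\bf r}$. On utilisera que $D\subset\X$ est un diviseur de Cartier relatif \emph{\'etale} sur $S$, stable sous $G$, et que $n=|G|$ est inversible sur $\O_S$. Comme dans la preuve du lemme \ref{lem:nuCst}, pour tout sous-groupe $H\subset G$ le lieu des points fixes $D^H$ est \'etale sur $S$, donc ouvert et ferm\'e dans $D$ (immersion ferm\'ee \'etale) ; il en ira de m\^eme du lieu $D_{=H}$ des points de stabilisateur exactement $H$, qui est de plus $G$-stable car $G$ est ab\'elien. En un point $x\in D_{=H}$ au-dessus de $y\in\bar D:=D/G$, on \'etablira que l'ordre de branchement $br(y)\in\ZZ/n\ZZ$ ne d\'epend, \emph{via} l'\'equation de Kummer locale $w^n=\alpha$, que de l'indice de ramification $e=|H|$ et du caract\`ere $\zeta_e^a$ (avec $a\in(\ZZ/e\ZZ)^*$) par lequel le g\'en\'erateur canonique de $H$ agit sur la droite tangente relative $T_{\X/S}$ en $x$. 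Cette droite \'etant la fibre du fibr\'e en droites $N_{D_{=H}/\X}$ sur $D_{=H}$, muni d'une action lin\'eaire de $H$ (qui agit trivialement sur la base $D_{=H}$), sa d\'ecomposition isotypique d\'ecoupera $D_{=H}$ en ouverts-ferm\'es $G$-stables $D_{=H,a}$ selon la valeur de $a$, sur chacun desquels $br$ est constant. Comme $G/H$ agit librement sur $D_{=H,a}$, le quotient $D_{=H,a}/G$ sera \'etale sur $S$, de degr\'e $\deg_S(D_{=H,a})/[G:H]$ localement constant, et la formule
\[
{\bf r}(s)_i=\sum_{(H,a)} \deg_S(D_{=H,a})/[G:H]
\]
-- la somme portant sur les couples $(H,a)$ d'ordre de branchement congru \`a $i$ modulo $n$ -- fournira la constance locale de ${\bf r}$.

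Le point d\'elicat est donc le traitement de ${\bf r}$, et plus pr\'ecis\'ement l'identification de l'ordre de branchement en un point marqu\'e au caract\`ere par lequel le stabilisateur agit sur la droite tangente relative : c'est elle qui ram\`ene la question \`a la d\'ecomposition isotypique d'un fibr\'e en droites sur le sch\'ema $D_{=H}$, \'etale sur $S$. La composante de Hurwitz ${\bf k}$, en revanche, est sous contr\^ole depuis la section pr\'ec\'edente.
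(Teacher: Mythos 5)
Your proof is correct, but on the component ${\bf r}$ it follows a genuinely different route from the paper's. The paper, after an \'etale base change making $D$ a disjoint union of sections, only proves that the stabilizer of each section is locally constant (tameness gives $G_x\subset G_{x'}$ under specialization, and a strict inclusion would force the two distinct points $x$ and $gx$ of the \'etale divisor $D$ to share the specialization $x'$), and then concludes that ${\bf r}$ is constant, the precise value of $br$ along a section being governed by the local constancy of the Hurwitz data already established (lemme \ref{lem:faisc} et remarque \ref{rem:ConstAndCano}), since the branch divisor is \'etale over la base. You instead decompose $D$ into the open-and-closed, $G$-stable loci $D_{=H,a}$ indexed by the stabilizer $H$ and by the character of its action on the relative tangent line, identify $br(y)$ with the pair $(|H|,a)$, and read off ${\bf r}(s)_i$ as a sum of degrees over $S$ of the finite \'etale quotients $D_{=H,a}/G$, degrees which are locally constant. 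What your version buys is that it makes explicit a point the paper leaves implicit: when $n$ is not prime, the ramification index $|H|$ alone does not determine $br(y)\in\ZZ/n\ZZ$, and your tangent-character refinement supplies exactly the missing invariant, whereas the paper extracts it from the locally constant sheaf $h_*\R^1 j_* G$; your open-and-closedness of $D^H$ in $D$ (equalizer of two sections of the finite \'etale $D\to S$) is also a clean substitute for the specialization argument. Two caveats: the key identification of $br(y)$ with $(|H|,a)$ is announced (\emph{on \'etablira}) but not carried out -- it is the routine local Kummer computation $w^n=\alpha$, $\alpha=us^{k}$, $s=t^{e}$ up to unit, showing that the character of $H$ on the tangent line determines and is determined by $k\bmod n$ once $G\isomto\mmu_n$ is fixed -- and it should be written down, since it is the heart of your argument; and your reduction to an \'etale-local choice of $G\isomto\mmu_n$ should note, as you implicitly do, that local constancy may be checked after a surjective \'etale base change.
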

	
	\begin{proof}
		Fixons un isomorphisme $G\simeq \mmu_n\otimes\bar\QQ$. Il suffit de montrer que la fonction ${\bf kr=(k,r)}\colon \mathcal M_{g, [m], \nu_0}\otimes\bar\QQ \to (\ZZ/n\ZZ)^{\nu_0}\times\NN^n$ est localement constante, c'est-à-dire que  $\bf r$ l'est, puisque la fonction $\bf k$ est localement constante suivant la remarque \ref{rem:ConstAndCano} - \ref{ConstanceLocaleDonneeHurwitz}.	
		
		Concernant $\bf r$, comme $D$ est un diviseur étale, quitte à faire un changement de base étale on peut supposer que $D$ est la réunion de $m$ sections disjointes. Il suffit alors de montrer que le stabilisateur de l'une quelconque de ces sections est constant car la donnée $\bf r$ l'est alors automatiquement. Considérons pour cela un point $x \in D$ et une de ses spécialisations $x'$.
		
		L'action de $G$ sur la courbe étant modérée, on a $G_x \subset G_{x'}$. Il s'agit alors de montrer l'égalité, mais si $g\in G_{x'} \setminus G_x$ alors $x$ et $gx$ ont même spécialisation $x'$, ce qui est absurde car $D$ est étale. 
	\end{proof}

	\begin{proof}[Démonstration la proposition \ref{prop:irrIsStack}]
		La structure de champ de l'espace $\mathcal M_{g, [m], \underline {\bf kr}}[G]$ provient de sa construction comme réunion de composantes connexes de champ. En effet, considérons le lieu $\mathcal M_{g, [m], g'}[G]$ des points de $\mathcal M_{g, [m]}[G]$ constitué des points $(\mathcal X, \iota)$ tel que le quotient $\mathcal X/G$ est une courbe propre et lisse de genre $g'$. Alors suivant \cite{MAU06}, $\mathcal M_{g, [m], g'}[G]$ est une réunion de composantes connexes de $\mathcal M_{g, [m]}[G]$.

		Considérons de plus le lieu $\mathcal M_{g, [m], g', \nu_0}[G]$ des points de $\mathcal M_{g, [m], g'}[G]$ tels que le revêtement $\mathcal X \to \mathcal X/G$ est ramifié au-dessus d'exactement $\nu_0$ points géométriques. Suivant le lemme \ref{lem:nuCst}, la fonction $\nu$ définie au lemme \ref{lem:nuCst} est localement constante, et $\mathcal M_{g, [m], g', \nu_0}[G]$ est un champ algébrique comme réunion de composantes connexes de $\mathcal M_{g, [m]}[G]$.

		Finalement le champ $\M_{g,[m],\underline{\bf kr}}[G]$ étant défini comme le lieu des points de $\mathcal M_{g, [m], g', \nu_0}[G]$, composé des courbes de donnée de branchement égale à $\underline{\bf kr}$, on conclut de même que précédement à partir du lemme \ref{r_constante}.
	\end{proof}
	
	Les champs $\mathcal M_{g, [m], \underline {\bf kr}}[G]$ sont naturellement munis d'une action libre de $\Aut(G)$. Le champ quotient $\mathcal M_{g, [m], \underline {\bf kr}}[G]/\Aut(G)$, dont les composantes irréductibles sont en bijection avec celles du lieu spécial $\M(G)$, classifie alors les courbes dont le groupe d'automorphisme contient un sous-groupe isomorphe à $G$.

	\section{Composantes irréductibles de l'espace des modules des courbes avec $G$-automorphisme}\label{sec:CompIrr}
	Dans cette section, nous établissons l'irréductibilité des sous-champs $\M_{g,[m],\underline{\bf kr}}[G]/\Aut(G)$ du champ $\M_{g,[m]}[G]/\Aut(G)$. Ce résultat s'obtient en se ramenant au cas déjà connu pour $m=0$ et en montrant l'existence, pour une courbe $\Y/K$ propre et lisse sur un corps, et pour une donnée de Hurwitz fixée, de l'existence d'un $G$-revêtement de $\Y$ de même donnée de Hurwitz et stable sous l'action du groupe de Galois $\Gal(\overline K/K)$. Cette étape nous permet de décrire dans la section \ref{Irreductibilite} l'action galoisienne sur le lieu de branchement d'un revêtement générique.
	
	\subsection{Rationnalité et données de Hurwitz}
	\label{Rationnalite}
	
	Considérons en toute généralité une immersion ouverte $j\colon \U \hookrightarrow \Y$ dans une courbe propre et lisse $\Y/K$, ainsi que ${\bf k} \in \H^0(\Y_{\bar K}, \R^1 j_* \mmu_n) \cong (\ZZ/n\ZZ)^\nu$ fixée et dont la somme des coordonnées est nulle, où $\nu$ désigne le nombre de points géométriques de $\Y \setminus \U$.

	Supposons de plus que ${\bf k}$ est stable sous l'action du groupe de Galois $\Gamma=\Gal(\bar K/K)$ induite sur $\H^0_\et(\Y_{\bar K}, \R^1 j_* \mmu_n)$ -- l'action étant donnée par permutation des indices $\{1, \ldots, \nu\}$. On montre ici qu'il existe un revêtement galoisien de $\Y$, de groupe $\ZZ/n\ZZ$, ramifié exactement au-dessus de $\Y\setminus \U$, et dont les données de Hurwitz géométriques correspondantes sont ${\bf k}\in (\ZZ/n\ZZ)^\nu$.

	\begin{theo}\label{DescenteEquivariante}
		Soit $K$ un corps de caractéristique première à $n$ contenant les racines $n$-èmes de l'unité, $\Y/K$ une courbe propre et lisse et $j\colon \U \hookrightarrow \Y$ une immersion ouverte avec $\U$ non vide. Notons $\bar K$ une clôture séparable de $K$, et $\Gamma=\Gal(\bar K/K)$ son groupe de Galois. Si $\Y$ possède un point $K$-rationnel, alors la suite
		\begin{equation*}
			\H^1_\et(\U_{\bar K}, \mmu_n)^\Gamma \to  \H^0_\et(\Y_{\bar K}, \R^1 j_* \mmu_n)^\Gamma \to \H^2_\et(\Y_{\bar K}, \mmu_n) \to 0
		\end{equation*}
		déduite de \eqref{LocalGlobalLeray} en prenant les points fixes sous $\Gamma$ est exacte.
	\end{theo}
	
	\begin{proof}
		Écrivant la suite exacte de Kummer sur $\U_{\bar K}$
		\begin{multline*}
			0 \to \mmu_n(K) \to \GG_m(\U_{\bar K}) \to \GG_m(\U_{\bar K}) \to \H^1_\et(\U_{\bar K}, \mmu_n) \to \H^1_\et(\U_{\bar K}, \GG_m) \\
			\to \H^1_\et(\U_{\bar K}, \GG_m)
		\end{multline*}
		on tire un diagramme commutatif de $\Gamma$-modules à lignes et colonnes exactes
		
		\begin{small}
			$$\xymatrix@C=0.3cm{
				& & \GG_m(\U_{\bar K})/\GG_m(\U_{\bar K})^n \ar[d] \ar@{=}[r] &  \GG_m(\U_{\bar K})/\GG_m(\U_{\bar K})^n \ar[d] \\
				0 \ar[r] & \Pic^0(\Y_{\bar K})[n] \ar[r] \ar@{=}[d] & \H^1_\et(\U_{\bar K}, \mmu_n) \ar[r] \ar[d]^{(1)} & \H^0_\et(\Y_{\bar K}, \R^1 j_* \mmu_n) \ar[r] \ar[d]& \H^2_\et(\Y_{\bar K}, \mmu_n) \ar@{=}[d]\\
				0 \ar[r] & \Pic^0(\Y_{\bar K})[n] \ar[r] & \Pic(\U_{\bar K})[n] \ar[r]^<(.1){(2)} \ar[d] & \H^0_\et(\Y_{\bar K}, \R^1 j_* \mmu_n)/\GG_m(\U_{\bar K}) \ar[d]\ar[r] & \H^2_\et(\Y_{\bar K}, \mmu_n) \\
				& & 0 & 0}$$
		\end{small}
		et il suffit de montrer que toutes les suites de ce diagrammes restent exactes par passage aux points fixes sous $\Gamma$.
		
		On voit qu'il suffit en fait de montrer que la ligne du bas est exacte et que la flèche $(1)$ reste surjective.
		
		Pour cela, il convient d'expliciter autant que possible la flèche $(2)$. Notons 
		$\{y_1 \ldots, y_\nu\}=(\Y\setminus \U)(\bar K)$. 
		On a un isomorphisme naturel de $\Gamma$-modules définit \emph{via} la suite de Kummer et les valuations normalisées en les $y_i$
		\begin{equation*}
			\H^0_\et(\Y_{\bar K}, \R^1 j_* \mmu_n) \isomto \bigoplus_{1 \le i \le \nu} \ZZ/n\ZZ,
		\end{equation*}
		l'action de $\Gamma$ sur $\bigoplus_{1 \le i \le \nu} \ZZ/n\ZZ$ se faisant par permutation sur les indices.
		
		D'autre part, toujours en utilisant Kummer, on montre que
		$\H^2_\et(\Y_{\bar K}, \mmu_n)=\ZZ/n\ZZ$, l'action de $\Gamma$ étant triviale sur 
		les deux modules. Le morphisme 
		\begin{equation*}
			\H^0_\et(\Y_{\bar K}, \R^1 j_* \mmu_n)\to\H^2_\et(\Y_{\bar K}, \mmu_n)
		\end{equation*}
		s'identifie ainsi à 
		\begin{equation*}
			\begin{array}{ccc}
				\bigoplus_i \ZZ/n\ZZ & \to & \ZZ/n\ZZ \\
				(k_i)_i & \mapsto & \sum_i k_i.
			\end{array} 
		\end{equation*}

		La flèche $(2)$ est alors définie de la manière suivante : soit $\mathcal L \in \Pic(\U_{\bar K})[n]$, prenant une représentation par des diviseurs de Weil, il existe alors $\mathcal M \in \Pic^0(\Y_{\bar K})$ telle que $\mathcal M |_\U\cong \mathcal L$ . Comme $\mathcal M^n|_\U = \mathcal L^n\cong \O_\U$, 
		il existe $\tilde k_1 \ldots, \tilde k_\nu \in \ZZ$ tels que $\mathcal M^n \cong \O_{\Y_{K^s}}(\sum_i \tilde k_i y_i)$.
		On associe ainsi à $\mathcal L$ l'image de $(\tilde k_i)_i$ dans $(\bigoplus_i \ZZ/n\ZZ)/\GG_m(\U_{\bar K})$.
		Comme $\mathcal M$ est de degré $0$, on a automatiquement $\sum_i \tilde k_i=0$.

		On retrouve de plus la surjectivité aisément. En effet, soit $\tilde k_1, \ldots, \tilde k_\nu \in \ZZ$ 
		tels que $\sum \tilde k_i=0$ et considérons $\mathcal L=\O_{\Y_{\bar K}}(\sum \tilde k_i y_i)$. 
		Alors il existe $\mathcal M \in \Pic^0(\Y_{\bar K})$ tel que $\mathcal M^n=\mathcal L$ et on a $\mathcal M^n|_\U \cong \O_\U$.
		
		\bigskip
		
		Supposons désormais qu'il existe un point rationnel $y$ dans $\Y$ et considérons 
		une famille $(\tilde k_i)_i$ fixe sous $\Gamma$ (pour la permutation des indices) et 
		telle que $\sum_i \tilde k_i = 0$.
		
		Pour tout $\alpha\in \{1, \ldots, \nu\}/\Gamma$, choisissons un représentant $i \in \alpha$ et fixons un
		faisceau inversible $\mathcal L_\alpha$ racine $n$-ième de $\O_{\Y_{\bar K}}(y_i-y)$. En particulier, pour tout 
		$\sigma \in \Gamma$ on a $\sigma \mathcal L_\alpha^n=\O_{\Y_{\bar K}}(\sigma(y_i)-y)$ car $y$, vu comme point 
		de $\Y(\bar K)$, est fixe sous $\Gamma$.
		
		Notons de plus $\Gamma_\alpha$ l'image dans $\mathfrak S_\nu$ du stabilisateur de $y_i$ dans $\Gamma$, 
		$\tilde k_\alpha=\tilde k_i$ et 
		$$\mathcal L=\bigotimes_{\left(\alpha\in\{1, \ldots n\}/\Gamma\right)} \bigotimes_{\sigma \in \Gamma_\alpha} \sigma \mathcal L_{\alpha}^{\tilde k_\alpha}.$$

		Alors, par construction, $\mathcal L$ est fixe sous $\Gamma$ et on a 
		$\mathcal L^n=\bigotimes_i \O_{\Y_{\bar K}}(\tilde k_i y_i-\tilde k_i y)=\O_{\Y_{\bar K}}(\sum_i \tilde k_i y_i)$. 
		Ce qui prouve l'exactitude du diagramme en la source de $(2)$.
		
		\bigskip
		
		Reste à voir la surjectivité de $(1)$ : considérons un faisceau inversible $\mathcal L$ fixe sous $\Gamma$ dont 
		la puissance $n$-ième est isomorphe à $\O_\U$. Puisque $\mathcal L \in (\Pic(\U_{\bar K})[n])^\Gamma=\Pic(\U)[n]$, on peut choisir un isomorphisme $\Gamma$-équivariant $\mathcal L^n \to \O_\U$ et on construit ainsi de manière $\Gamma$-équivariante un $\mmu_n$-torseur comme dans \cite{COR87}.
	\end{proof}

	Pour une donnée de Hurwitz géométrique invariante sous $\Gamma$ fixée, on dispose ainsi d'un $\mmu_n$-revêtement de \emph{corps de modules} $K$. Sous condition d'existence d'un point rationnel, ce dernier s'avère être un \emph{corps de définition} du revêtement.
	\begin{cor}
		\label{ExisteneDonneeRationnelle}
		Soient $K$ un corps de caractéristique première à $n$ contenant les racines $n$-ièmes de l'unité, 
		$\bar K$ sa clôture algébrique et $\Gamma=\Gal(\bar K/K)$.
		
		Soit $\Y$ une courbe propre et lisse sur un corps $K$, $y_1, \ldots, y_\nu\in \Y(\bar K)$ des points fermés permutés sous l'action de $\Gamma$ et ${\bf k}=(k_1, \ldots , k_\nu) \in (\ZZ/n\ZZ)^\nu$. Supposons que $\forall \sigma \in \Gamma$, si $\sigma(y_i)=y_j$ alors $k_i=k_j$. 
		
		Alors, si $\Y$ possède un point rationnel distinct des $y_i$, il existe un revêtement galoisien $\X \to \Y$ de groupe $\mmu_n$ ramifié en les seuls points $y_i$ et dont les données de Hurwitz sont $\bf k$.
	\end{cor}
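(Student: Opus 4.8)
Le corollaire est une conséquence directe du théorème \ref{DescenteEquivariante}. La stratégie est la suivante : partant des points $y_1,\ldots,y_\nu$ et de la donnée $\mathbf{k}=(k_1,\ldots,k_\nu)$, on pose $\U=\Y\setminus\{y_1,\ldots,y_\nu\}$ et $j\colon\U\hookrightarrow\Y$ l'immersion ouverte associée. Le couple de données $(y_i,k_i)$ fournit, \emph{via} l'isomorphisme naturel de $\Gamma$-modules
\[
\H^0_\et(\Y_{\bar K},\R^1 j_*\mmu_n)\isomto\bigoplus_{1\le i\le\nu}\ZZ/n\ZZ
\]
rappelé dans la preuve du théorème \ref{DescenteEquivariante}, un élément $\mathbf{k}$ de ce groupe. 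L'hypothèse selon laquelle $\sigma(y_i)=y_j$ entraîne $k_i=k_j$ est exactement la condition que $\mathbf{k}$ soit fixe sous l'action de $\Gamma$ (qui agit par permutation des indices, compatiblement avec la permutation des $y_i$) ; de même, l'hypothèse $\sum_i k_i=0$ — implicite dès que la donnée provient d'un revêtement de degré $n$, et qu'on peut toujours supposer ici quitte à remarquer qu'elle est nécessaire — place $\mathbf{k}$ dans le noyau du morphisme $\bigoplus_i\ZZ/n\ZZ\to\ZZ/n\ZZ$, $(k_i)_i\mapsto\sum_i k_i$, c'est-à-dire dans l'image de la flèche précédente de la suite exacte du théorème \ref{DescenteEquivariante}.

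On applique alors le théorème \ref{DescenteEquivariante} : l'existence d'un point $K$-rationnel sur $\Y$ — ici le point rationnel distinct des $y_i$ fourni par hypothèse, ce qui garantit au passage qu'on peut normaliser les racines $n$-ièmes $\mathcal L_\alpha$ de $\O_{\Y_{\bar K}}(y_i-y)$ comme dans la preuve — assure l'exactitude de la suite
\[
\H^1_\et(\U_{\bar K},\mmu_n)^\Gamma\to\H^0_\et(\Y_{\bar K},\R^1 j_*\mmu_n)^\Gamma\to\H^2_\et(\Y_{\bar K},\mmu_n)\to 0.
\]
Puisque $\mathbf{k}\in\H^0_\et(\Y_{\bar K},\R^1 j_*\mmu_n)^\Gamma$ s'envoie sur $0$ dans $\H^2_\et(\Y_{\bar K},\mmu_n)=\ZZ/n\ZZ$, il admet un antécédent $a\in\H^1_\et(\U_{\bar K},\mmu_n)^\Gamma=\H^1_\et(\U,\mmu_n)$. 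Cet élément correspond à un $\mmu_n$-torseur sur $\U$ défini sur $K$, donc à un revêtement galoisien $\X\to\Y$ de groupe $\mmu_n$, défini sur $K$ ; par construction de l'isomorphisme ci-dessus \emph{via} les valuations normalisées en les $y_i$, ce revêtement est ramifié exactement au-dessus des $y_i$ et ses données de Hurwitz géométriques sont précisément $\mathbf{k}$.

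Le point délicat n'est pas la déduction elle-même, qui est formelle, mais la vérification que le point rationnel distinct des $y_i$ suffit à faire fonctionner la construction $\Gamma$-équivariante des faisceaux $\mathcal L_\alpha$ dans la preuve du théorème \ref{DescenteEquivariante} : c'est précisément pour que $\O_{\Y_{\bar K}}(y_i-y)$ soit de degré $0$ et fixe sous le stabilisateur de $y_i$ que l'on a besoin que $y$ soit rationnel et \emph{différent} de tous les $y_i$. Une fois ce point acquis — et il l'est par hypothèse — l'énoncé suit sans difficulté supplémentaire. On remarquera enfin que la condition $\sum_i k_i=0$, non explicitement mentionnée dans l'énoncé, est en fait forcée : tout $\mmu_n$-revêtement de $\Y$ donné localement par $w^n=\alpha$ a pour donnée de Hurwitz la famille des ordres d'annulation de $\alpha$ aux $y_i$, dont la somme est le degré du diviseur principal $\div(\alpha)$, soit $0$ modulo $n$.
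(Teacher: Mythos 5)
Your reduction to the exact sequence of Theorem \ref{DescenteEquivariante} follows the paper's first step, but there is a genuine gap at the decisive point: you write $\H^1_\et(\U_{\bar K},\mmu_n)^\Gamma=\H^1_\et(\U,\mmu_n)$ and conclude that the lifted class $a$ corresponds to a $\mmu_n$-torsor \emph{d\'efini sur $K$}. This identification is unjustified (and literally false: by Hochschild--Serre the map $\H^1_\et(\U,\mmu_n)\to\H^1_\et(\U_{\bar K},\mmu_n)^\Gamma$ has kernel $\H^1(\Gamma,\mmu_n)\cong K^*/(K^*)^n$, and its surjectivity is obstructed by a transgression into $\H^2(\Gamma,\mmu_n)$). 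What Theorem \ref{DescenteEquivariante} actually provides is a class in $\H^1_\et(\U_{\bar K},\mmu_n)^\Gamma$, i.e.\ a covering $\X'\to\Y_{\bar K}$ whose isomorphism class is Galois-stable, in other words whose \emph{corps de modules} is $K$; descending it to a covering defined over $K$ is precisely the nontrivial content of the corollary, not a formality. The paper handles this by citing D{\`e}bes--Douai \cite{DEB90}, corollaire 3.4: since $G$ is abelian and $\Y$ has a $K$-rational point, the condition (Seq/Split) holds, so the field of moduli is a field of definition and $\X'\to\Y_{\bar K}$ descends to $\X\to\Y$.

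Your argument could be repaired along your own lines: the rational point is distinct from the $y_i$, hence lies in $\U$, and a $K$-point of $\U$ makes $\H^2(\Gamma,\mmu_n)\to\H^2_\et(\U,\mmu_n)$ split injective, killing the transgression and giving the surjectivity of $\H^1_\et(\U,\mmu_n)\to\H^1_\et(\U_{\bar K},\mmu_n)^\Gamma$; one then takes the normalization of $\Y$ in the resulting torsor over $\U$. But this (or the field-of-moduli argument of the paper) must actually be said: as written, you assign the rational point only the role it plays \emph{inside} the proof of Theorem \ref{DescenteEquivariante} (the equivariant construction of the $\mathcal L_\alpha$), and you miss its second, independent use, which is exactly the descent from $\bar K$ to $K$. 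Your closing remark that $\sum_i k_i=0$ is forced is fine and consistent with the paper.
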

	
	\begin{proof} Les hypothèse définissent un élément de $\H^0_\et(\Y_{\bar K}, \R^1 j_* \mmu_n)^\Gamma$ qui provient de $\H^1_\et(\U_{\bar K}, \mmu_n)^\Gamma$ d'après le théorème \ref{DescenteEquivariante}. Autrement dit, il existe un revêtement galoisien $\X' \to \Y_{\bar K}$ dont le corps de module est $K$. Utilisant \cite{DEB90}, corollaire 3.4, ce corps de modules est aussi le corps de définition puisque 
		$G$ est abélien et $\Y$ possède un point rationnel par hypothèse, ce qui assure la condition (Seq/Split) de \emph{loc. cit}. 
		Le revêtement $\X' \to \Y_{\bar K}$ provient donc d'un revêtement $\X \to \Y$ qui répond à la question. \end{proof}
	
	Ce résultat nous est utile pour établir l'irréductibilité géométrique des champs $\mathcal M_{g, [m], \underline {\bf kr}}[G]/\Aut(G)$ dans la section suivante.
	
	\subsection{Description des composantes irréductibles}
	\label{Irreductibilite}
	Soit $G$ groupe cyclique d'ordre $n$. Nous obtenons ici le premier résultat fondamental de cet article qu'est la description des composantes irréductibles de $\M_{g,[m]}[G]/\Aut(G)$ à partir des données de Hurwitz $\underline{\bf  kr}$.
	\begin{theo}
		\label{theo:DescriptionCompIrred}
		Les champs algébriques $\mathcal M_{g, [m], \underline {\bf kr}}[G]/\Aut(G)$ sont géométriquement irréductibles.
	\end{theo}

	Le résultat suivant précise la forme que peut revêtir l'action galoisienne sur les points  de branchement dans le cas générique : les orbites y sont exactement décrites par les données de branchement. Comme cette propriété est stable par générisation, il suffit de construire un revêtement la satisfaisant.
	
	Pour ${\bf k}\in(\ZZ/n\ZZ)^{\nu}$, on note $r_i({\bf k})$ le nombre de coordonnées dans ${\bf k}$ égales à $i \mod n$. 
	
	\begin{prop}\label{lem:formDiv}
		Soit ${\bf k}\in (\ZZ/n\ZZ)^{\nu}$ des données de Hurwitz géométriques abstraites. Il existe alors un corps $K$ et un morphisme $\Spec K \to \mathcal M_{g, [0], \underline {\bf k,0}}[G]$ tel que 
		\begin{equation*}
			\supp(B)= \coprod_{1 \le i < n} \Spec K_i
		\end{equation*}
		où $B$ est le diviseur de branchement de la courbe correspondante, et où $K_i/K$ est une extension de degré $r_i({\bf k})$ -- resp. vide si ce dernier nombre est nul.
		
		Notant $\tilde K_i$ la clôture galoisienne de $K_i$, les extensions $\tilde K_i/K$ sont alors linéairement disjointes, de degré $r_i( {\bf k})!$ et de groupe de Galois $\mathfrak S_{r_i( {\bf k})}$.
	\end{prop}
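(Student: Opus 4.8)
The plan is to use the remark just preceding the statement: since the property is stable under generization, it is enough to exhibit \emph{one} field $K$ and \emph{one} $K$-point of $\mathcal M_{g,[0],\underline{\bf k,0}}[G]$ whose branch locus has the asserted shape, and then read off its branch divisor. I would build this point as a \emph{generic} cyclic cover, the branch points of a given Hurwitz weight $i$ being placed in ``as generic a position as the constant-Hurwitz-datum constraint allows''.

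First I would fix the data: let $g'$ be the genus of the quotient curve attached to $(g,n,{\bf k})$ (determined by Riemann--Hurwitz), and choose a smooth proper geometrically irreducible curve $\Y_0$ of genus $g'$ over $\QQ(\zeta_n)$ carrying a rational point $O$ (take $\PP^1$ if $g'=0$, a hyperelliptic model with a rational Weierstrass point otherwise), together with an isomorphism $G\simeq\mmu_n$. Writing $r_i=r_i({\bf k})$ for $1\le i<n$, I would then set $T=\prod_{1\le i<n}\sym^{r_i}\Y_0$ (omitting the factors with $r_i=0$) and $K=\QQ(\zeta_n)(T)$, the function field of $T$. As $\Y_0$ is geometrically irreducible, so is $T$, and the projection onto the $i$-th factor equips $\Y_{0,K}$ with a ``universal'' closed point $Z_i$ of degree $r_i$, with residue field $K_i$, obtained by pulling back the universal divisor on $\Y_0\times\sym^{r_i}\Y_0$.

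The next step, which I expect to be the main technical point, is the Galois-theoretic bookkeeping. Using that $\Y_0^{r_i}\to\sym^{r_i}\Y_0$ is generically étale and Galois with group $\mathfrak S_{r_i}$, and that $\Y_0^{r_i}\times\prod_{i'\ne i}\sym^{r_{i'}}\Y_0$ remains geometrically irreducible, I would identify the Galois closure $\tilde K_i$ of $K_i/K$ with the function field of the latter scheme, so that $\Gal(\tilde K_i/K)=\mathfrak S_{r_i}$ and $[\tilde K_i:K]=r_i!$ (the case $r_i=1$ giving $\tilde K_i=K$, $\mathfrak S_1$, consistently). Since the $\tilde K_i$ come from covers of \emph{distinct} factors of $T$, their compositum is the function field of $\QQ(\zeta_n)(\prod_i\Y_0^{r_i})$, of degree $\prod_i r_i!$ over $K$; hence the $\tilde K_i/K$ are linearly disjoint. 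I would also note that the $Z_i$ are pairwise disjoint and disjoint from $O$, their supports being distinct closed points at the generic point of $T$.

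Finally I would produce the cover by invoking Corollary \ref{ExisteneDonneeRationnelle}: $K$ has characteristic $0$ and contains $\mmu_n$; the geometric points making up $\coprod_i Z_i$ are permuted by $\Gamma=\Gal(\bar K/K)$ in a way that preserves the weight $i$ (a $\Gamma$-conjugate of a point of $Z_i$ lies in $Z_i$), so the compatibility ``$\sigma(y)=y'\Rightarrow k_y=k_{y'}$'' holds; the sum $\sum_j k_j=\sum_{1\le i<n}i\,r_i$ vanishes modulo $n$ because ${\bf k}$ is a valid abstract Hurwitz datum; and $\Y_0$ has the rational point $O$ away from the $Z_i$. The corollary then yields a Galois cover $\X\to\Y_{0,K}$ of group $\mmu_n\simeq G$, ramified exactly over $\coprod_i Z_i$ with Hurwitz datum ${\bf k}$; $\X$ has genus $g$ by Riemann--Hurwitz, and with the fixed isomorphism this is the desired $K$-point of $\mathcal M_{g,[0],\underline{\bf k,0}}[G]$, whose branch divisor is $B=\coprod_{1\le i<n}Z_i=\coprod_{1\le i<n}\Spec K_i$ with $[K_i:K]=r_i$ and the $\tilde K_i/K$ as above. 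The one issue requiring care beyond this bookkeeping is connectedness of $\X$ (equivalently, that the local data generate $G$), which holds whenever $\mathcal M_{g,[0],\underline{\bf k,0}}[G]$ is non-empty and, in particular, for our generic configuration.
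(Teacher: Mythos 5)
Your construction is correct and rests on the same key ingredient as the paper's proof, namely Corollary \ref{ExisteneDonneeRationnelle} applied over a base field for which the branch points of weight $i$ form a single irreducible closed point of degree $r_i({\bf k})$ with full $\mathfrak S_{r_i({\bf k})}$ Galois closure, the various weights being linearly disjoint, together with a rational point on the quotient curve. Where you diverge is in how this generic configuration is produced: the paper takes the generic point of $\left(\M_{g',\nu+1}\right)_\CC$ (adding marked points to kill a residual gerbe), lets $\Gamma=\prod_i\mathfrak S_{r_i({\bf k})}$ act on the marked points and, ``par g\'en\'ericit\'e'', on the function field $K$, and then descends to $K_0=K^\Gamma$ and $\Y_0=\Y/\Gamma$; you instead fix one curve $\Y_0/\QQ(\zeta_n)$ with a rational point and take $K$ to be the function field of $\prod_i\sym^{r_i}\Y_0$, the $K_i$ being the residue fields of the universal divisors. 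Your symmetric-power bookkeeping (Galois closure $=$ function field of $\Y_0^{r_i}\times\prod_{i'\neq i}\sym^{r_{i'}}\Y_0$, compositum of degree $\prod_i r_i({\bf k})!$) makes explicit what the paper compresses into the ``g\'en\'ericit\'e'' step, at the mild cost of using a special base curve rather than the generic one --- harmless, since the statement is purely existential and its later use only invokes stability under generization inside the irreducible (Cornalba--Catanese) stack. One caveat: your closing claim that connectedness of $\X$ ``holds whenever $\M_{g,[0],\underline{\bf k,0}}[G]$ is non-empty'' is not literally immediate, since Corollary \ref{ExisteneDonneeRationnelle} produces \emph{some} $\mmu_n$-cover with the given Hurwitz datum and that particular cover is automatically connected only when the local data generate $G$; otherwise one must further twist by a suitable $\Gamma$-invariant unramified class. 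The paper's proof is silent on exactly the same point, so this does not put you behind it, but the sentence as written overstates what has been checked.
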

	
	\begin{proof} 
		Afin d'appliquer le corollaire \ref{ExisteneDonneeRationnelle}, construisons une courbe $\Y_0/K_0$ qui comporte un point rationnel.
		Considérons le point générique de $\left(\mathcal M_{g', \nu+1}\right)_\CC$. Quitte à ajouter encore des points marqués sur les courbes pour se débarrasser d'une éventuelle gerbe résiduelle en le point générique, on peut supposer que ce point correspond à une courbe $\Y/K$ munie de points $y_0, \ldots, y_\nu$. Pour chaque $i \in \{1, \ldots, n\}$, notons $D_i=\{y_\ell, \ell > 0, k_\ell=i \mod n\}$.
		
		On peut alors, \emph{via} le choix d'un ordre sur les points de $D_i$, définir une action libre et transitive de $\mathfrak S_{r_i( {\bf k})}$ sur $D_i$. Par généricité de $K$, celui-ci acquiert une action de $\Gamma=\prod_i \mathfrak S_{r_i( {\bf k})}$ compatible avec celle sur les $D_i$.
		
		Notons $K_0$ le corps fixé par $\Gamma$ et $\Y_0=\Y/\Gamma$ la courbe quotient. Celle-ci vient naturellement avec les diviseurs $D_i/\Gamma$, irréductibles par construction et de degré $r_i({\bf k})$, le groupe de Galois de leur corps de fonctions sur $K_0$ étant $\mathfrak S_{r_i( {\bf k})}$.

		Notons que la courbe $(\Y_0, \sum_i D_i/\Gamma)$ peut être vue comme la courbe générique de genre $g'$ munie de $n$ diviseurs respectivement de degré $r_i( {\bf k})$ et d'un point rationnel disjoint du support des $D_i/\Gamma$. Le corollaire \ref{ExisteneDonneeRationnelle} fournit alors un revêtement galoisien $\X \to \Y_0$ de groupe $\ZZ/n\ZZ$ possédant les propriétés requises car $\CC \subset K_0$ et donc $K_0$ contient les racines de l'unité.
	\end{proof}

	\begin{proof}[Démonstration du théorème \ref{theo:DescriptionCompIrred}]
		Rappelons que lorsque $m=0$, l'irréductibilité est établie par M.~Cornalba et F.~Catanese respectivement dans \cite{COR87,CAT10}.
		
		Considérons dans un premier temps le cas où les points marqués sont disjoints du lieu de ramification. Notons que la fonction $r_n$ -- donnée par $r_n\colon{\bf kr}=({\bf k,r})\mapsto {\bf r}_n$ -- est invariante sous l'action de $(\ZZ/n\ZZ)^*$ et définit donc une fonction $r_n$ sur $\underline{\bf kr}$. Dans ce cas $r_{n}(\underline{\bf kr})=\frac{m}{n}$, et l'on considère alors l'ouvert $\U$ de $\mathcal M_{g, [0], \underline {\bf (k,0)}}\times_{\mathcal M_{g}} \mathcal M_{g, [\frac{m}{n}]}$ composé des courbes équivariantes $(\mathcal \X, \iota)$ munies d'un diviseur $E$ (\emph{a priori} non équivariant), étale de degré $\frac{m}{n}$ et tel que $hE \cap E = \emptyset$ pour tout $h \in G$ non trivial.
		
		Le morphisme $(\mathcal \X, \iota, E) \mapsto (\mathcal \X, \iota, \coprod_{h \in G} h E)$ définit alors un isomorphisme 
		\begin{equation*}
			\U \isomto \mathcal M_{g, [m], \underline {\bf kr}}[G].
		\end{equation*}
		Comme $\U/\Aut(G)$ est irréductible, car $\mathcal M_{g, [0], \underline {(\bf k,0)}}/\Aut(G)$ l'est d'après le rappel ci-dessus, et que $\mathcal M_{g, [\frac{m}{n}]}\to \mathcal M_{g}$ est géométriquement irréductible, il en est alors de même de $\mathcal M_{g, [m], \underline {\bf kr}}[G]/\Aut(G)$.
		
		\bigskip
		
		Considérons maintenant le cas général. Soit $S \to \mathcal M_{g, [m]}[G]$, correspondant à une courbe $(\mathcal \X/S, \iota)$ et à un diviseur équivariant $D$. On note $D_f$ le diviseur obtenu de $D$ en ne gardant que les points sur lesquels $G$ agit librement, et qui est donc de degré $n r_n(\underline{\bf kr})$. 
		D'après le lemme \ref{r_constante}, la formation de $D_f$ commute au changement de base, et définit 
		donc un morphisme
		\begin{equation}\label{eq:morPsi}
			\Psi \colon \mathcal M_{g, [m], \underline {\bf kr}}[G] \to \mathcal M_{g, [nr_n], \underline {\bf kr}_{red}}[G] 
		\end{equation}
		où $\underline{\bf kr}_{red}$ est défini par une donnée géométrique ${\bf kr}_{red}=({\bf k,r_{red}})\in (\ZZ/n\ZZ)^{\nu}/\mathfrak{S}_{\nu}\times \NN^n$ avec ${\bf r}_{red}={(0, \ldots, 0, r_n)}$.
		
		D'après le cas précédent, le quotient par $\Aut(G)$ de l'espace d'arrivée est irréductible. Pour montrer que le quotient de l'espace de départ l'est également, il suffit donc de montrer qu'il y a au plus un point au-dessus de chaque point générique de $\mathcal M_{g, [n r_n], \underline {\bf kr}_{red}}[G]$. Le lemme suivant, dont nous reportons la preuve en fin de section, réduit cette question à un calcul de degré d'extension : 
		
		\begin{lem}\label{lem:psirep}
			Soit $\underline{\bf kr}$ une donnée de branchement, et $\Psi$ le morphisme défini en \eqref{eq:morPsi}. Alors $\Psi$ est représentable, étale et de degré égal à $\prod_{i < n} \binom{r_i( {\bf k})}{r_i}$, où $({\bf k}, (r_1,\dots,r_n))$ est un relèvement de $\underline{\bf kr}$, le degré étant indépendant du choix du relèvement. 
		\end{lem}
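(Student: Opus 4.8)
Le plan est de décrire explicitement les fibres géométriques de $\Psi$, puis d'en déduire successivement la représentabilité, le caractère étale et le degré. Pour la représentabilité, on observe que, sur un corps algébriquement clos, un objet de $\mathcal M_{g, [m], \underline {\bf kr}}[G]$ est un triplet $(\mathcal X, \iota, D)$, d'image $(\mathcal X, \iota, D_f)$ par $\Psi$, où $D_f$ est la somme des $G$-orbites de $D$ sur lesquelles $G$ agit librement. Tout automorphisme $G$-équivariant de $\mathcal X$ préservant $D$ préserve aussi le lieu des points à stabilisateur trivial, donc $D_f$ ; ainsi l'application $\Aut(\mathcal X, \iota, D) \to \Aut(\mathcal X, \iota, D_f)$ est l'inclusion évidente, en particulier injective, d'où la représentabilité de $\Psi$.

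Pour les fibres et le caractère étale, on fixe un point géométrique $(\mathcal X, \iota, D_f)$ de l'espace d'arrivée et l'on note $\pi \colon \mathcal X \to \mathcal Y = \mathcal X/G$ le morphisme quotient, $B \subset \mathcal Y$ son diviseur de branchement. Relever ce point le long de $\Psi$ revient à adjoindre à $D_f$ un diviseur $G$-équivariant étale $D'$, disjoint de $D_f$, tel que $D = D_f \sqcup D'$ ait $\underline{\bf kr}$ pour donnée de branchement. Comme $G$ est cyclique d'ordre $n$ inversible, le lemme \ref{lem:nuCst} montre que $B$ est un diviseur étale et que la fibre réduite de $\pi$ au-dessus d'un point de branchement $y$ de donnée de Hurwitz $i$ est une unique $G$-orbite ; ainsi $D'$ est nécessairement une réunion de telles orbites, et la condition sur $\underline{\bf kr}$ impose, pour chaque $1 \le i < n$, d'en choisir exactement $r_i$ parmi les $r_i({\bf k})$ points de branchement de donnée $i$. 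La fibre de $\Psi$ est donc en bijection avec l'ensemble de ces choix, de cardinal $\prod_{i < n}\binom{r_i({\bf k})}{r_i}$. Pour conclure au caractère étale, on se ramène, par un changement de base étale sur l'espace d'arrivée, au cas où $B$ se scinde en sections, regroupées selon leur donnée de Hurwitz ; tout choix $c$ formé, pour chaque $i$, d'un sous-ensemble de $r_i$ sections parmi les $r_i({\bf k})$ de donnée $i$ fournit alors, via $D' = \coprod_{\sigma \in c} \pi^{-1}(\sigma)$, une section de $\Psi$, et ces sections identifient $\Psi$, après ce changement de base, à une réunion disjointe de $\prod_{i < n}\binom{r_i({\bf k})}{r_i}$ copies de l'identité. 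Le morphisme $\Psi$ est donc étale, de degré annoncé, ce dernier étant un entier indépendant du point par le lemme \ref{r_constante} et la remarque \ref{rem:ConstAndCano} - \ref{ConstanceLocaleDonneeHurwitz} ; son indépendance vis-à-vis du relèvement $({\bf k}, (r_1, \dots, r_n))$ de $\underline{\bf kr}$ vient de ce que l'action de $(\ZZ/n\ZZ)^*$ multiplie simultanément les indices des familles $(r_i)$ et $(r_i({\bf k}))$, et ne fait donc que permuter les facteurs du produit.

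La combinatoire des fibres étant directe, la principale difficulté réside dans la vérification du caractère étale proprement dit : il faut s'assurer que les sections locales issues du scindage de $B$ recouvrent bien la source et y sont ouvertes et fermées, autrement dit que $\Psi$ devient un revêtement trivial après ce changement de base. Ce point repose de façon essentielle sur le fait que $B$ est un diviseur de Cartier relatif étale (lemme \ref{lem:nuCst}), qui garantit que les orbites $\pi^{-1}(y)$ se déforment de manière plate et étale en famille, et sur la constance locale de la donnée de branchement (lemme \ref{r_constante}), qui interdit tout saut du cardinal des fibres.
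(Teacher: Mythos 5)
Votre preuve est correcte et suit pour l'essentiel la m\^eme strat\'egie que le texte : description des fibres g\'eom\'etriques comme choix, pour chaque $i$, de $r_i$ points de branchement parmi les $r_i({\bf k})$ de donn\'ee $i$, puis caract\`ere \'etale obtenu en scindant le lieu de ramification apr\`es changement de base \'etale. Deux diff\'erences m\'eritent d'\^etre signal\'ees. Pour la repr\'esentabilit\'e, vous donnez un argument direct de fid\'elit\'e (un automorphisme $G$-\'equivariant pr\'eservant $D$ pr\'eserve le lieu des points \`a stabilisateur trivial, donc $D_f$, d'o\`u l'injectivit\'e sur les groupes d'automorphismes), l\`a o\`u le texte invoque le morphisme de contraction de Knudsen \cite{KNUII} ; votre argument est plus autonome, et il vaut tel quel en famille, ce qui est pr\'ef\'erable puisque la fid\'elit\'e se v\'erifie sur les objets au-dessus d'une base quelconque (ou, de mani\`ere \'equivalente pour un champ de Deligne--Mumford s\'epar\'e, sur les points g\'eom\'etriques). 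Pour le caract\`ere \'etale, votre trivialisation apr\`es changement de base \'etale est une mise en {\oe}uvre explicite de l'argument du texte (rel\`evement des sections \`a l'int\'erieur du lieu de ramification, lui-m\^eme \'etale sur la base) ; le point pr\'ecis qui la justifie, et que vous ne faites qu'esquisser, est que toute section non libre de $D$ se factorise par un sous-sch\'ema de points fixes $\mathcal X^H$, diviseur relatif \'etale (preuve du lemme \ref{lem:nuCst}), de sorte que $D\setminus D_f$ est ouvert et ferm\'e dans le lieu de ramification scind\'e et correspond donc exactement, sur une base connexe, \`a un choix d'orbites de sections ; c'est bien l\`a que se joue l'\'etalit\'e, et votre esquisse est au niveau de d\'etail du texte lui-m\^eme. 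Enfin, vous justifiez l'ind\'ependance du degr\'e vis-\`a-vis du rel\`evement (l'action de $(\ZZ/n\ZZ)^*$ permute simultan\'ement les $r_i$ et les $r_i({\bf k})$), point laiss\'e implicite dans le texte.
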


		Considérons un point géométrique générique $\eta$ de $\mathcal M_{g, [n r_n], \underline {\bf kr}_{red}}[G]$ et $\xi \in \Psi^{-1}(\{\eta\})$, et fixons un isomorphisme $G\simeq \mmu_n$, ce qui correspond à fixer un antécédent géométrique $\bf kr$ de $\underline{\bf kr}$. Il suffit d'après le lemme \ref{lem:psirep}, de montrer que l'extension $k(\xi)/k(\eta)$ est de degré au moins $\prod_{i < |G|} \binom{r_i( {\bf k})}{r_i}$. 
		
		Comme $\Psi$ est représentable, il correspond à $\xi$ une courbe $G$-équivariante $\X/k(\xi)$. La description de l'action galoisienne donnée par la proposition \ref{lem:formDiv} étant stable par générisation, il y a pour  le morphisme $\X \to \X/G$ exactement un point de branchement par indice $i$ tel que $r_i \not = 0$ et les corps résiduels $K_i$ correspondants sont linéairement disjoints. On notera $\tilde K_i$ la clôture galoisienne de $K_i/K$.
		
		Considérons l'extension minimale $L/k(\eta)$ rendant rationnels tous les points de branchement. On a alors $L=\tilde K_1 \otimes_{k(\eta)} \ldots \otimes_{k(\eta)} \tilde K_n$  et le groupe de Galois de $L/k(\eta)$ est alors
		\begin{equation*}
			\Gal(L/k(\eta))\simeq\prod_i \mathfrak S_{r_i( {\bf k})}. 
		\end{equation*}
		
		Par ailleurs, considérons l'inclusion $k(\xi) \subset L$. Les extensions $\tilde K_i$ étant linéairement indépendantes, on a ainsi $k(\xi)=\bigotimes_i (\tilde K_i \cap k(\xi))$. La preuve du résultat se réduit alors à montrer que l'extension $\tilde K_i\cap k(\xi)$ de $K$ est de degré au moins égale à $\binom{r_i( {\bf k})}{r_i}$.
		
		Pour un indice fixé $i$, notons $y_1, \ldots, y_{r_i({\bf k})}$ les points de branchements de $\X_L/G$ ayant $i \mod n$ comme données de branchement, numérotés de sorte que le diviseur $D$ corresponde exactement à $\sum_{1 \le \ell \le r_i} y_\ell$.
		
		Tout élément de $\Gal(\tilde K_i/\tilde K_i \cap k(\xi)) \subset \Gal(\tilde K_i/K)=\mathfrak S_{r_i}$ 
		préserve alors les données de Hurwitz, induisant ainsi une bijection de $\{1, \ldots, r_i\}$. On a donc 
		naturellement 
		\begin{equation*}
			\Gal(\tilde K_i/\tilde K_i \cap k(\xi)) \subset Bij(\{1, \ldots, r_i\}) \times Bij(\{r_i+1, \ldots, r_i( {\bf k})\}),
		\end{equation*}
		et il s'ensuit que 
		\begin{equation*}
			[\tilde K_i : \tilde K_i \cap k(\xi)] \le r_i ! (r_i( {\bf k})-r_i)! 
		\end{equation*}
		puis enfin que
		\begin{equation*}
			[\tilde K_i \cap k(\xi):K]=\frac{[\tilde K_i : K]}{[\tilde K_i :\tilde K_i \cap k(\xi)]} \ge \binom{r_i( {\bf k})}{r_i}. 
		\end{equation*}
		Ceci établit la minoration recherchée de l'extension $k(\xi)/k(\eta)$ et termine ainsi la preuve.
	\end{proof}
	
	Donnons maintenant la preuve du lemme restée en attente.
	\begin{proof}[Preuve du lemme \ref{lem:psirep}]
		Le morphisme $\Psi$ s'apparente au morphisme de contraction de Knudsen et est donc représentable d'après \cite{KNUII}.
		Le fait qu'il est étale s'obtient par déformation, et découle du relèvement des sections à l'intérieur du lieu de ramification, lui-même étale sur la base.
		
		Pour ce qui est du degré, il suffit de calculer le cardinal des fibres géométriques. Or
		les points de ces fibres correspondent aux choix, pour chaque indice $i \not = 0 \mod |G|$, de $r_i$
		points ayant une donnée de Hurwitz égale à $i$, et on a le choix parmi $r_i( {\bf k})$.    
	\end{proof}
	
	\begin{rem}\mbox{}
		\begin{enumerate}[label=\roman*),ref=\roman*)]
			\item Les champs des revêtements cycliques \emph{non-ramifiés} apparaissent parmi les composantes irréductibles de $\mathcal M_{g,[m]}[G]/\Aut(G)$. Ceci découle du premier cas dans la preuve du théorème \ref{theo:DescriptionCompIrred} et du résultat original de F.~Catanese déjà cité.

			\item Dans le cas du genre zéro et de $G$ cyclique, les champs des lieux spéciaux $\M_{0,[m]}(G)$ sont irréductibles sur $\QQ$ puisque
			\[
			\M_{0,[m]}(G)=\M_{0,k}/H
			\]
			pour un certain $k$, où $H$ est un sous-groupe de permutations admissibles de points -- voir  \cite{GALGROUPS}. 
		\end{enumerate}
	\end{rem}
	
	\bigskip
	
	Nous disposons ainsi d'une description des composantes irréductibles du champ des lieux spéciaux $\M_{g,[m]}(G)$. Dans la section suivante,  ceci nous permet d'établir l'invariance de chacune des composantes sous l'action du groupe de Galois absolu $\Gq$.
	
	\section{Action du groupe de Galois absolu, torsion géométrique, cas profini de genre $2$}\label{sec:actionGal}
	Grâce à la description des composantes irréductibles du champ $\M_{g,[m]}[G]/\Aut(G)$ obtenue dans la section précédente, on établit dans cette section le deuxième résultat fondamental de cet article, à savoir la forme de l'action galoisienne sur la torsion \emph{géométrique et cyclique} de $\pi_1^{geom}(\M_{g,[m]})$. 
	
	De ce résultat, on déduit la forme de l'action galoisienne sur une partie de la $p$-torsion \emph{profinie cyclique} des courbes de genre $2$.
	
	Dans ce qui suit, nous reprenons les identifications entre groupes fondamentaux et mapping class groups, i.e entre $\pi_1^{orb}(\M_{g,[m]})$ et $\Gamma_{g,[m]}$ d'une part, et entre $\pi_1^{geom}(\M_{g,[m]})$ et $\widehat{\Gamma}_{g,[m]}$ d'autre part, les isomorphismes étant définis à conjugaison près par le choix de points bases dans les groupes fondamentaux.

	\subsection{Torsion géométrique de $\pi_1^{geom}(\M_{g,[m]})$ et action}
	
	Tout d'abord, notons que la stabilité sous l'action galoisienne des classes de conjugaison de torsion géométrique cyclique est une conséquence immédiate des résultats de la section précédente.
	
	\begin{prop}\label{prop:ActConj}
		Soit $G$ un sous-groupe fini cyclique de $\pi_1^{geom}(\M_{g,[m]})$ et $\mathcal G$ une classe de conjugaison fixée de $G$ dans $\Gamma_{g,[m]}$. Alors $\mathcal G$ est stable sous l'action de $\Gq$.
	\end{prop}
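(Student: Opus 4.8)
Le plan est de d\'eduire l'\'enonc\'e de la description des composantes irr\'eductibles de $\M_{g,[m]}[G]/\Aut(G)$ obtenue en sections \ref{sec:brData}--\ref{sec:CompIrr}, combin\'ee avec la compatibilit\'e des actions galoisiennes de la proposition \ref{prop:galCompIrr}. D'abord, je traduirais la stabilit\'e de $\mathcal G$ en une stabilit\'e de composante irr\'eductible : d'apr\`es la proposition \ref{prop:irrIsconj} la classe de conjugaison $\mathcal G$ de $G$ dans $\Gamma_{g,[m]}$ correspond \`a une composante irr\'eductible bien d\'efinie $\M_{g,[m],\mathcal G}$ du lieu sp\'ecial $\M_{g,[m]}(G)_{\overline\QQ}$, et via le corollaire \ref{prop:irrSpecIrrHur} (et la proposition \ref{prop:norm}) celle-ci correspond \`a son tour \`a une composante irr\'eductible $\mathcal Z$ de la normalis\'ee $(\M_{g,[m]}[G]/\Aut(G))_{\overline\QQ}$. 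Puisque l'inertie en un point g\'eom\'etrique d'une telle composante est donn\'ee par conjugaison dans $\Gamma_{g,[m]}$, montrer que $\mathcal G$ est $\Gq$-stable revient, par la compatibilit\'e de la proposition \ref{prop:galCompIrr}, \`a montrer que $\mathcal Z$ est stable sous $\Gq$ en tant que composante de $(\M_{g,[m]}[G]/\Aut(G))_{\overline\QQ}$.

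Ensuite j'identifierais $\mathcal Z$ explicitement. D'apr\`es le th\'eor\`eme \ref{theo:DescriptionCompIrred}, les composantes irr\'eductibles de $(\M_{g,[m]}[G]/\Aut(G))_{\overline\QQ}$ sont exactement les champs g\'eom\'etriquement irr\'eductibles $(\M_{g,[m],\underline{\bf kr}}[G]/\Aut(G))_{\overline\QQ}$ index\'es par les donn\'ees de branchement abstraites $\underline{\bf kr}$ ; ainsi $\mathcal Z = (\M_{g,[m],\underline{\bf kr}}[G]/\Aut(G))_{\overline\QQ}$ pour la donn\'ee de branchement $\underline{\bf kr}$ attach\'ee \`a $\mathcal G$. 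Or la proposition \ref{prop:irrIsStack} affirme pr\'ecis\'ement que $\M_{g,[m],\underline{\bf kr}}[G]$ est un sous-champ alg\'ebrique de $\M_{g,[m]}[G]$ d\'efini sur $\QQ$, r\'eunion de composantes connexes ; comme $\Aut(G)$ est un groupe fini constant agissant sur $\M_{g,[m],\underline{\bf kr}}[G]$ au-dessus de $\QQ$, le champ quotient $\M_{g,[m],\underline{\bf kr}}[G]/\Aut(G)$ est encore d\'efini sur $\QQ$, donc $\mathcal Z$, \'etant le chang\'e de base \`a $\overline\QQ$ d'un $\QQ$-champ, est stable sous l'action de $\Gq$.

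En recollant les deux \'etapes : $\mathcal Z$ est $\Gq$-stable, donc par la proposition \ref{prop:galCompIrr} la composante correspondante $\M_{g,[m],\mathcal G}$ de $\M_{g,[m]}(G)_{\overline\QQ}$ l'est aussi, et par fonctorialit\'e de l'identification de la proposition \ref{prop:irrIsconj} avec les classes de conjugaison dans $\Gamma_{g,[m]}$, la classe $\mathcal G$ elle-m\^eme est stable. Le seul point demandant un minimum de soin est le passage \guillemotleft{} d\'efini sur $\QQ$ $\Rightarrow$ $\Gq$-stable \guillemotright{} pour le champ quotient, c'est-\`a-dire v\'erifier que la formation du quotient par le groupe constant $\Aut(G)$ commute au changement de base $\QQ \to \overline\QQ$ ; c'est imm\'ediat puisque $\Aut(G)$ est fini \'etale et que l'action est d\'ej\`a d\'efinie sur $\QQ$ par la proposition \ref{prop:irrIsStack}. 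Aucun ingr\'edient nouveau au-del\`a des sections \ref{sec:brData}--\ref{sec:CompIrr} n'est requis.
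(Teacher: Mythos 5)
Votre preuve suit essentiellement le m\^eme chemin que celle du texte : r\'eduction de la stabilit\'e de $\mathcal G$ \`a celle de la composante irr\'eductible correspondante de $(\M_{g,[m]}[G]/\Aut(G))_{\overline\QQ}$ via les propositions \ref{prop:irrIsconj} et \ref{prop:galCompIrr}, identification de cette composante comme $(\M_{g,[m],\underline{\bf kr}}[G]/\Aut(G))_{\overline\QQ}$ par le th\'eor\`eme \ref{theo:DescriptionCompIrred}, puis invariance galoisienne parce que ce champ est d\'efini sur $\QQ$ d'apr\`es la proposition \ref{prop:irrIsStack}. Votre v\'erification suppl\'ementaire que le quotient par le groupe fini $\Aut(G)$ reste d\'efini sur $\QQ$ et commute au changement de base est une pr\'ecision bienvenue mais ne change pas l'argument.
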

	
	\begin{proof}
		D'après la section 2, propositions \ref{prop:galCompIrr} et \ref{prop:irrIsconj}, la stabilité d'une classe de conjugaison de $G$ est équivalente à la stabilité de la composante irréductible correspondante du champ des lieux spéciaux $(\M_{g,[m]}[G]/\Aut(G))_{\bar \QQ}$. 
		
		Suivant le théorème \ref{theo:DescriptionCompIrred}, une telle composante est définie par une donnée de branchement $\underline{\bf kr}$, et de la forme $\left(\M_{g,[m],\underline{\bf kr}}/\Aut(G)\right)_{\bar \QQ}$. Or, d'après la proposition \ref{prop:irrIsStack}, une telle composante $\M_{g,[m],\underline{\bf kr}}/\Aut(G)$ est un champ algébrique défini sur $\QQ$ et est donc fixe sous l'action de $\Gq$. Il s'ensuit la stabilité de chacune des composantes de $(\M_{g,[m]}[G]/\Aut(G))_{\bar \QQ}$, et de chacune des classes de conjugaison de $G$ dans $\Gamma_{g,[m]}$.
	\end{proof}
	
	Autrement dit, pour $\sigma\in\Gq$ et $\gamma\in \pi_1^{geom}(\M_{g,[m]})$ de torsion géométrique, il existe $\rho_{\sigma} \in \pi_1^{geom}(\mathcal M_{g, [m]})$ tel que
	\begin{equation*}
		\sigma(\gamma)=\rho_{\sigma}\ \gamma^{\ell_{\sigma}}\ \rho_{\sigma}^{-1}\qquad \textrm{ pour un certain } \ell_{\sigma} \in \widehat{\ZZ}.
	\end{equation*}
	Ce résultat ne permet néanmoins pas de compléter l'action et d'identifier la puissance $\ell_{\sigma}$. Il s'agit d'un problème de compatibilité d'action galoisienne en des points biens choisis de la composante, dont la résolution occupe le reste de cette section.
	
	\bigskip
	
	\noindent\textsc{Action cyclotomique, compatibilité d'actions galoisiennes.} Nous montrons maintenant que la puissance $\ell_{\sigma}$ ci-dessus s'identifie avec le caractère cyclotomique. 
	
	L'idée consiste à transporter l'action de $\Gq$ en un $K$-point $x_K$ de la composante irréductible déterminée par la classe de conjugaison de $G$, de telle sorte que l'action de $\Gq$ soit compatible avec l'action de $\Gal(\overline K/K)$ sur le groupe d'automorphismes de $x_K\in\M_{g,[m]}[G]$.
	
	Le caractère cyclotomique apparaît alors \emph{via} le branch cycle argument.
	
	\bigskip
	
	L'extension $K$ de $\QQ$ possédant les propriétés galoisiennes voulues est ainsi donnée par le lemme suivant.
	\begin{lem}
		\label{lem:CourbeGen}
		Il existe un morphisme $\Spec K \to \mathcal M_{g, [m], \underline {\bf kr}}[G]/\Aut(G)$
		tel que $K$ et $\bar \QQ$ sont linéairement disjoints sur $\QQ$.
	\end{lem}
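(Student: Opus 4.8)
The goal is to produce a point of the irreducible stack $\mathcal M_{g,[m],\underline{\bf kr}}[G]/\Aut(G)$ whose residue field $K$ is linearly disjoint from $\bar{\mathbb Q}$ over $\mathbb Q$. The natural candidate is a \emph{generic point}: by Théorème \ref{theo:DescriptionCompIrred} the stack $\mathcal M_{g,[m],\underline{\bf kr}}[G]/\Aut(G)$ is geometrically irreducible, and by Proposition \ref{prop:irrIsStack} it is an algebraic stack defined over $\mathbb Q$ which is a union of connected components of $\mathcal M_{g,[m]}[G]$. First I would take a representative étale cover $U\to \mathcal M_{g,[m],\underline{\bf kr}}[G]/\Aut(G)$ by a $\mathbb Q$-scheme $U$ which is still irreducible (possible since the stack is geometrically irreducible and Deligne--Mumford over $\mathbb Q$), and let $\eta$ be its generic point. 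Set $K=k(\eta)$, the function field of $U$; the composite $\Spec K\to U\to \mathcal M_{g,[m],\underline{\bf kr}}[G]/\Aut(G)$ is the desired morphism.

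\textbf{Linear disjointness.} It remains to check that $K$ and $\bar{\mathbb Q}$ are linearly disjoint over $\mathbb Q$ inside a common overfield. Since $U$ is a $\mathbb Q$-scheme which is \emph{geometrically irreducible} (this is exactly the content of the geometric irreducibility in Théorème \ref{theo:DescriptionCompIrred}, transported to the étale chart $U$), the base change $U_{\bar{\mathbb Q}}=U\times_{\mathbb Q}\bar{\mathbb Q}$ is again irreducible. Hence $\mathbb Q$ is algebraically closed in $K$, and more precisely $K\otimes_{\mathbb Q}\bar{\mathbb Q}$ is a domain (its spectrum being the generic point of the irreducible scheme $U_{\bar{\mathbb Q}}$). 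This is precisely the statement that $K$ and $\bar{\mathbb Q}$ are linearly disjoint over $\mathbb Q$. One small point requires care: the stack may be empty, in which case there is nothing to prove; otherwise the generic point exists and the argument applies. Another point is that passing from the stack to an étale chart $U$ preserves geometric irreducibility, which follows from the fact that an étale cover of a geometrically irreducible (hence geometrically connected) Deligne--Mumford stack can be chosen geometrically irreducible, or one simply works with an irreducible component of $U_{\bar{\mathbb Q}}$ and notes that the Galois group permutes such components transitively when they are not defined over $\mathbb Q$ --- but here there is only one since $U_{\bar{\mathbb Q}}$ is irreducible.

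\textbf{Main obstacle.} The genuinely substantive input is already packaged in Théorème \ref{theo:DescriptionCompIrred}: that $\mathcal M_{g,[m],\underline{\bf kr}}[G]/\Aut(G)$ is \emph{geometrically} irreducible and not merely irreducible. Given that, the present lemma is essentially the standard dictionary ``geometrically irreducible $\mathbb Q$-variety $\Longrightarrow$ function field linearly disjoint from $\bar{\mathbb Q}$.'' The only mild technical care needed is the reduction from the stack to a scheme-theoretic chart $U$ and the verification that geometric irreducibility of the stack yields geometric irreducibility of (a component of) the chart, so that $K\otimes_{\mathbb Q}\bar{\mathbb Q}$ is a domain. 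I expect the write-up to be short: invoke Théorème \ref{theo:DescriptionCompIrred} and Proposition \ref{prop:irrIsStack}, choose the chart and its generic point, and conclude by the domain criterion for linear disjointness.
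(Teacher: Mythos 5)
Votre réduction « champ géométriquement irréductible sur $\QQ$ $\Rightarrow$ corps des fonctions linéairement disjoint de $\bar\QQ$ » est la bonne idée, mais le point que vous traitez en passant est précisément le contenu non trivial du lemme, et votre justification en est circulaire. L'affirmation qu'« un atlas étale d'un champ de Deligne--Mumford géométriquement irréductible peut être choisi géométriquement irréductible » est fausse en général : pensez à une gerbe non neutre sur $\Spec\QQ$ liée par un groupe fini ; elle est géométriquement irréductible, mais tout atlas étale par un schéma est une réunion de spectres d'extensions \emph{non triviales} de $\QQ$, donc aucun n'est géométriquement irréductible et aucun point n'a un corps résiduel linéairement disjoint de $\bar\QQ$. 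Le même phénomène menace ici au point générique de $\M_{g,[m],\underline{\bf kr}}[G]/\Aut(G)$ : la gerbe résiduelle générique (les automorphismes des objets, et la question corps de modules / corps de définition, que l'article doit d'ailleurs traiter séparément au corollaire \ref{ExisteneDonneeRationnelle}) peut forcer toute carte à introduire des nombres algébriques. Votre phrase de repli (« le groupe de Galois permute transitivement les composantes \dots{} mais il n'y en a qu'une puisque $U_{\bar\QQ}$ est irréductible ») suppose exactement ce qu'il fallait démontrer, et si $U_{\bar\QQ}$ avait plusieurs composantes conjuguées, le corps des fonctions de $U$ contiendrait une extension algébrique non triviale de $\QQ$, ruinant la disjonction linéaire.

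La preuve de l'article contourne cet obstacle par une rigidification explicite : on choisit $m'$ assez grand pour que $\M_{g,m'}$ soit représentable, on observe que $\left(\M_{g,[m],\underline{\bf kr}}[G]/\Aut(G)\right)\times_{\M_g}\M_{g,m'}$ est alors représentable (les points marqués ordonnés tuent les automorphismes, donc la gerbe générique) et reste géométriquement irréductible sur $\QQ$ (fibres de type espace de configurations au-dessus d'une base géométriquement irréductible, cf. théorème \ref{theo:DescriptionCompIrred} et proposition \ref{prop:irrIsStack}), puis on prend le corps résiduel $K$ du point générique de ce schéma et on compose avec la projection vers $\M_{g,[m],\underline{\bf kr}}[G]/\Aut(G)$. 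C'est le même principe que l'astuce déjà utilisée dans la preuve de la proposition \ref{lem:formDiv} (« se débarrasser d'une éventuelle gerbe résiduelle » en ajoutant des points marqués). Pour réparer votre rédaction, remplacez l'atlas étale par cette rigidification, ou justifiez réellement l'existence d'une carte géométriquement irréductible — ce qui revient au même travail.
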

	
	\begin{proof} Soit $m'\in\NN$ suffisamment grand pour que $\mathcal M_{g, m'}$ soit représentable. On voit alors
		que $\left(\mathcal M_{g, [m], \underline {\bf kr}}[G]/\Aut(G)\right)\times_{\mathcal M_{g}} \mathcal M_{g, m'}$ l'est également
		et qu'il est de plus géométriquement irréductible sur $\QQ$. Le corps résiduel de son point générique fournit le morphisme annoncé.
	\end{proof}
	
	Un tel $K$-point de $\mathcal M_{g, [m], \underline {\bf kr}}[G]/\Aut(G)$ étant donné, décrivons la situation en ce qui concerne les actions de $\Gq$ et $\Gal(\overline K/K)$ sur les groupes fondamentaux de la courbe $\X$ associée.
	
	En terme de groupes de Galois, si $K/\QQ$ est une extension linéairement disjointe de $\bar \QQ$, on a alors un isomorphisme
	\begin{equation*}
		\Gal(\bar \QQ/\QQ) \overset{\sim}{\to} \Gal(K \otimes_\QQ \bar \QQ/K)
	\end{equation*}
	qui définit ainsi un morphisme surjectif 
	\begin{equation}
		\label{eq:RestrictionAction}
		\Gal(\bar K/K)\to \Gal(K \otimes_\QQ \bar \QQ/K) \simeq \Gal(\bar \QQ/\QQ).
	\end{equation}
	
	\bigskip
	
	Par ailleurs, fixons l'action du groupe de Galois sur le groupe fondamental par le choix d'un point base \emph{tangentiel} à l'infini. Soit $Z$ un $\QQ$-champ muni d'un point base tangentiel $\overrightarrow b$. Notant $\overrightarrow b_K$ le point base tangentiel induit par changement de base à $K$ il existe, suivant le théorème d'invariance du groupe fondamental par changement de base algébriquement clos, un isomorphisme
	\begin{equation*}
		\pi_1^{alg}(Z_{\bar K}, \overrightarrow b_K) \overset{\sim}{\to} \pi_1^{alg}(Z_{\bar \QQ},\overrightarrow b) 
	\end{equation*}
	Ces deux points bases tangentiels définissent alors une action de $\Gal(\bar K/K)$ et de $\Gal(\bar \QQ/\QQ)$ sur $\pi_1^{alg}(Z_{\bar K}, \overrightarrow b_K)$ -- voir section 1, et plus précisément :
	
	\begin{lem}
		\label{lem:ChangementCorps}
		L'action de $\Gal(\bar K/K)$ sur $\pi_1^{alg}(Z_{\bar K}, \overrightarrow b_K)$ est induite par 
		celle de $\Gal(\bar \QQ/\QQ)$ à travers le morphisme \eqref{eq:RestrictionAction}.
	\end{lem}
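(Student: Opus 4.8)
The statement to prove is Lemme~\ref{lem:ChangementCorps}: that the Galois action of $\Gal(\bar K/K)$ on $\pi_1^{alg}(Z_{\bar K}, \overrightarrow b_K)$ coincides with the action of $\Gal(\bar\QQ/\QQ)$ transported through the surjection \eqref{eq:RestrictionAction}. The proof should be essentially formal, reducing everything to the functoriality of the homotopy exact sequence under base change, so the emphasis is on bookkeeping of base points and of the identifications already fixed in the excerpt. I want to prove this for a $\QQ$-champ $Z$ equipped with a tangential base point $\overrightarrow b$, using the invariance of the fundamental group under extension of algebraically closed fields of characteristic zero recalled in the text (via \cite{ZOO2001}).

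\textbf{Le plan.} L'énoncé étant de nature purement formelle, je le déduirais de la fonctorialité de la suite exacte d'homotopie sous changement de base, toute la difficulté résidant dans le suivi soigneux des points bases. La première étape consiste à écrire les deux suites exactes d'homotopie en jeu. Pour $Z/\QQ$ (géométriquement connexe, ce qui est le cas dans les applications) muni du point base tangentiel $\QQ$-rationnel $\overrightarrow b$, on dispose de la suite exacte
\[
1 \to \pi_1^{alg}(Z_{\bar\QQ},\overrightarrow b) \to \pi_1^{alg}(Z,\overrightarrow b) \to \Gal(\bar\QQ/\QQ) \to 1,
\]
scindée par $\overrightarrow b$, et c'est le scindage qui définit l'action galoisienne considérée ici et en section~1 — cf.~\eqref{eq:SFE} et \cite{NAKA99,ZOO2001}. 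De même, pour $Z_K=Z\times_\QQ K$ muni du point base tangentiel induit $\overrightarrow b_K$, qui est $K$-rationnel comme changement de base de $\overrightarrow b$, on a la suite exacte analogue scindée par $\overrightarrow b_K$, laquelle définit l'action de $\Gal(\bar K/K)$ sur $\pi_1^{alg}(Z_{\bar K},\overrightarrow b_K)$.

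\textbf{Comparaison des deux suites.} La seconde étape est d'observer que le morphisme de champs $Z_K\to Z$ au-dessus de $\Spec K\to\Spec\QQ$ est compatible aux points bases ($\overrightarrow b_K\mapsto\overrightarrow b$), donc induit un morphisme de suites exactes d'homotopie. Sur le terme de droite, ce morphisme est la restriction $\Gal(\bar K/K)\to\Gal(\bar\QQ/\QQ)$, $\sigma\mapsto\sigma|_{\bar\QQ}$, bien définie puisque $\bar\QQ\subset\bar K$ et $\bar\QQ/\QQ$ est galoisienne ; la disjonction linéaire de $K$ et $\bar\QQ$ sur $\QQ$ garantit que $K\otimes_\QQ\bar\QQ$ est un corps et que cette restriction se factorise en $\Gal(\bar K/K)\twoheadrightarrow\Gal(K\otimes_\QQ\bar\QQ/K)\isomto\Gal(\bar\QQ/\QQ)$, c'est-à-dire qu'elle n'est autre que le morphisme \eqref{eq:RestrictionAction}. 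Le point clé, troisième étape, est d'identifier le morphisme induit sur les noyaux, $\pi_1^{alg}(Z_{\bar K},\overrightarrow b_K)\to\pi_1^{alg}(Z_{\bar\QQ},\overrightarrow b)$ : c'est, par définition, le morphisme de changement de base le long de $\bar\QQ\hookrightarrow\bar K$, donc l'isomorphisme d'invariance du groupe fondamental par extension de corps algébriquement clos de caractéristique nulle (\cite{ZOO2001}, \textsc{II}.2 et \textsc{II}.4) — c'est précisément l'isomorphisme $\pi_1^{alg}(Z_{\bar K},\overrightarrow b_K)\isomto\pi_1^{alg}(Z_{\bar\QQ},\overrightarrow b)$ fixé dans l'énoncé du lemme.

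\textbf{Conclusion et obstacle principal.} Il ne reste alors qu'à conjuguer : pour $\sigma\in\Gal(\bar K/K)$, le relèvement de $\sigma$ via le scindage $\overrightarrow b_K$ s'envoie, par compatibilité des scindages, sur le relèvement de $\sigma|_{\bar\QQ}$ via le scindage $\overrightarrow b$ ; la commutativité du carré de gauche du morphisme de suites exactes montre donc que la conjugaison par le premier relèvement correspond, à travers l'isomorphisme de la troisième étape, à la conjugaison par le second, ce qui est exactement l'assertion de compatibilité voulue. L'obstacle principal n'est pas mathématique mais réside dans la rigueur du formalisme dans le cadre champêtre et tangentiel : il faut s'assurer que la suite exacte d'homotopie est disponible et scindée par un point base tangentiel rationnel, et que l'invariance de $\pi_1$ sous extension de corps algébriquement clos s'applique aux $\QQ$-champs de Deligne--Mumford munis de tels points bases — faits que l'on emprunte aux références citées. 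Une fois ces énoncés en place, la preuve se réduit à la chasse au diagramme décrite ci-dessus.
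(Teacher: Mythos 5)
Votre preuve est correcte, mais elle suit une route sensiblement différente de celle du texte. Vous passez par la suite exacte d'homotopie de $Z$ et de $Z_K$, leurs scindages par les points bases tangentiels rationnels $\overrightarrow b$ et $\overrightarrow b_K$, puis par un morphisme de suites exactes dont la flèche de droite s'identifie à \eqref{eq:RestrictionAction} et celle de gauche à l'isomorphisme d'invariance $\pi_1^{alg}(Z_{\bar K},\overrightarrow b_K)\isomto\pi_1^{alg}(Z_{\bar\QQ},\overrightarrow b)$ ; la compatibilité des scindages (fonctorialité appliquée au carré $\Spec K\to Z_K$, $\Spec\QQ\to Z$) donne alors l'énoncé par conjugaison. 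Le texte, lui, évite entièrement ce formalisme : puisque l'action galoisienne est construite au niveau du foncteur fibre, il suffit de la vérifier sur la catégorie des revêtements étales de $Z_{\bar\QQ}$, et la compatibilité résulte d'un simple cube cartésien exprimant que le $\tilde\sigma$-tordu d'un revêtement provenant de $Z_{\bar\QQ}$ provient du $\sigma$-tordu correspondant. Votre approche est plus structurée mais plus coûteuse : elle requiert la connexité géométrique de $Z$, l'existence de la suite exacte d'homotopie pour un champ de Deligne--Mumford avec point base tangentiel et l'identification de l'action de la section avec l'action définie en section~1, autant de faits que vous déléguez à juste titre aux références (\cite{NAKA99}, \cite{ZOO2001}) mais dont l'argument du texte se dispense ; en contrepartie, votre présentation rend explicite le rôle de la disjonction linéaire de $K$ et $\bar\QQ$ dans l'identification de la flèche \eqref{eq:RestrictionAction}. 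Aucune lacune réelle, seulement une dépendance plus lourde en machinerie pour un énoncé que le texte traite par un diagramme cartésien élémentaire.
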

	
	\begin{proof} Par construction de $\pi_1^{alg}(Z_{\bar \QQ}, \overrightarrow b)$, 
		il suffit de le vérifier au niveau de la catégorie des revêtements étales de $Z_{\bar \QQ}$.
		
		Considérons donc un tel revêtement $W \to Z_{\bar \QQ}$ et un élément $\sigma \in \Gal(\bar \QQ/\QQ)$.
		Soit $\tilde \sigma \in \Gal(\bar K/K)$ une préimage de $\sigma$ par le morphisme \eqref{eq:RestrictionAction}.
		La compatibilité des deux actions provient alors du diagramme suivant dans lequel tous les carrés sont cartésiens
		\begin{equation*}
			\xymatrix{
				& W'_{\bar K}\ar[rr]\ar[ld] \ar'[d][dd] & & Z_{\bar K} \ar[ld] \ar[dd]^{\tilde \sigma}\\
				W' \ar[dd]\ar[rr] & & Z_{\bar \QQ}\ar[dd]^<(0.3){\sigma} \\
				& W_{\bar K} \ar[ld]\ar'[r][rr] & & Z_{\bar K} \ar[ld]\\
				W \ar[rr] & & Z_{\bar \QQ}
			} 
		\end{equation*}
		puisque $W'_{\bar K} \to Z_{\bar K}$ provient de $W' \to Z_{\bar \QQ}$. 
	\end{proof}

	Nous établissons maintenant le résultat principal de cet article, qui précise la proposition \ref{prop:ActConj} sous une certaine condition géométrique. 
	
	Rappelons ainsi qu'en toute généralité, un $G$-revêtement $X\to X/G$ se factorise en un $H$-revêtement $X\to X/H$, où $H$ est le sous-groupe de $G$ engendré par les sous-groupes d'inerties des points de ramification et $X/H\to X/G$ est un revêtement étale. 
	
	Par la suite, fixant $G$ un groupe d'inertie géométrique de $\pi_1^{geom}(\mathcal M_{g, [m]})$ et $\tilde{\mathcal G}=\{\mathcal G\}$ un ensemble de classes de conjugaisons de $G$, on dira que $\tilde{\mathcal G}$ est \emph{sans factorisation étale} si tel est le cas des $G$-revêtements associés.
	
	\bigskip
	
	Énon\c cons enfin.
	
	\begin{theo}\label{theo:CycloGeom}
		Soit $G=\langle\gamma\rangle \subset \pi_1^{geom}(\mathcal M_{g, [m]})$ un groupe d'inertie géométrique cyclique fini, i.e. champêtre, et $\tilde{\mathcal G}=\{\mathcal G\}$ un ensemble de classes de conjugaisons sans factorisation étale. Alors, pour $\gamma\in \tilde{\mathcal G}$, il existe $\rho_{\sigma} \in \pi_1^{geom}(\mathcal M_{g, [m]})$ tel que pour tout $\sigma \in \Gal(\bar \QQ/\QQ)$ on a
		\[\sigma(\gamma)=\rho_{\sigma}\ \gamma^{\chi(\sigma)}\ \rho_{\sigma}^{-1}\] 
		où $\chi$ désigne le caractère cyclotomique.
	\end{theo}

	\begin{proof} 
		Posons $G=\langle\gamma\rangle$ et considérons $\mathcal N_{\gamma}$ le lieu des points associé à la classe de conjugaison de $G$. Suivant \cite{BRO90} théorème 2.1, $\mathcal N_{\gamma}$ est une composante irréductible de $\mathcal M_{g,[m]}(G)$
		qui, suivant le théorème \ref{theo:DescriptionCompIrred}, correspond à l'un des champs $\mathcal M_{g, [m], \underline {\bf kr}}[G]/\Aut(G)$.

		\medskip

		On dispose tout d'abord d'un point rationnel $x_K$ de $\left(\mathcal M_{g, [m], \underline {\bf kr}}[G]/\Aut(G)\right)_K$ tel que l'action de $\Gq$ sur son groupe d'automorphisme inertiel est donnée par l'action de $\Gal(\bar K/K)$. En effet, le lemme \ref{lem:CourbeGen}, assure l'existence d'un point $x_K\colon \Spec K \to  \mathcal M_{g, [m], \underline {\bf kr}}[G]/\Aut(G)$ qui convient pour l'action galoisienne, puisque pour $\sigma\in\Gq$ fixé, comme $K$ et $\bar \QQ$ sont linéairement indépendants, il existe $\tilde \sigma \in \Gal(\bar K/K)$ préimage de $\sigma$  qui fait commuter le diagramme suivant
		\begin{equation*}
			\xymatrix{
				\Spec \bar K \ar[d]_ {x_{\bar K}} \ar[r]^{\tilde \sigma} & \Spec \bar K \ar[d]^{x_{\bar K}} \\
				\left(\mathcal M_{g, [m], \underline {\bf kr}}[G]/\Aut(G)\right)_{\bar K} \ar[r]^{\sigma}&  \left(\mathcal M_{g, [m], \underline {\bf kr}}[G]/\Aut(G)\right)_{\bar K} 
			} 
		\end{equation*}

		Supposons maintenant l'action de $\Gq$ sur $\pi_1^{geom}(\M_{g,[m]})$ fixée par le choix d'un point base tangentiel $\overrightarrow b$. Selon le lemme \ref{lem:ChangementCorps}, cette action se ramène par changement de base de $\QQ$ à $K$ à l'action de $\Gal(\bar K/K)$ sur $\pi_1^{alg}((\mathcal M_{g, [m]})_{\bar K}, \overrightarrow b_K)$. D'après \cite{NOOHI04}, le changement de point base de $\overrightarrow b_K$ à $x_{\bar{K}}$ dans le groupe fondamental de l'inclusion canonique
		\begin{equation*}
			\Aut(x_{\bar{K}})\hookrightarrow  \pi_1^{alg}\left(\left(\mathcal M_{g, [m]}\right)_{\bar K}, x_{\bar{K}}\right)
		\end{equation*}
		modifie l'action par la conjugaison et l'on peut donc supposer le groupe fondamental basé en $x_{\bar{K}}$. Considérant le diagramme $\Gal(\bar K/K)$-équivariant
		\begin{equation*}
			\xymatrix{
				\Aut(x_{\bar{K}}) \ar@{^(->}[r]\ar@{^(->}[rd] & \pi_1^{alg}\left(\left(\mathcal M_{g, [m], \underline {\bf kr}}[G]/\Aut(G)\right)_{\bar K}, x_{\bar{K}}\right)\ar[d] \\
				&  \pi_1^{alg}((\mathcal M_{g, [m]})_{\bar K}, x_{\bar K})
			} 
		\end{equation*}
		il suffit ainsi d'établir que l'action de $\Gal(\bar K/K)$ sur $\Aut(x_{\bar{K}})$ dans le groupe $\pi_1^{alg}((\mathcal M_{g, [m], \underline {\bf kr}}[G]/\Aut(G))_K,  x_{\bar{K}})$ est donnée par cyclotomie.

		\medskip
		
		Considérons donc $\mathcal X$ la courbe associée au point $x_K$, alors par construction $\gamma$ est un automorphisme de $x_{\bar K}$. D'après la proposition \ref{prop:ActConj}, on sait que pour tout $\sigma \in \Gal(\bar K/K)$ on a $\sigma(\gamma)=g\ \gamma^{\ell_{\sigma}}\ g^{-1}$.

		Pour tout point de ramification $P \in \X$, notons $I_P$ le stabilisateur dans $\langle \gamma\rangle$ et $\tau$
		un générateur de $I_P$, donc de la forme $\gamma^{n/a}$ où $n$ est l'ordre de $\gamma$ et $a$ est l'ordre de $\tau$. 
		Le \emph{branch cycle argument} \cite{FRIED}[p. 62] donne alors $\sigma(\tau)=\tau^{\chi(\sigma)}$ et donc $\frac{n}{a}\ell_\sigma=\frac{n}{a}\chi(\sigma) \mod n$. On a donc $\ell_\sigma = \chi(\sigma) \mod a.$
		
		Par suite, comme les stabilisateurs engendrent $\langle \gamma \rangle$ par hypothèses, on trouve que 
		\[\ell_\sigma=\chi(\sigma) \mod n\] 
		ce qui conclut la preuve.
	\end{proof}

	\begin{rem}\mbox{}
		\begin{enumerate}
			\item Lorsque $\gamma$ est l'automorphisme associé à un $G$-revêtement de base de genre $0$, on retrouve ainsi l'énoncé classique du \emph{branch cycle argument}. Le théorème \ref{theo:CycloGeom} est alors vrai sans condition de factorisation puisque la droite projective n'admet pas de revêtement étale.
			\item En particulier, dans le cas du genre $1$, ce résultat étend \emph{à une inertie géométrique elliptique d'ordre quelconque} le résultat établi \emph{pour les éléments d'ordre premier elliptiques} à partir de la théorie de Grothendieck-Teichmüller \cite{Colg1}. 
		\end{enumerate}
	\end{rem}

	\subsection{Torsion profinie de $\widehat{\Gamma}_{2,[m]}$ et action}
	Identifiant $\pi_1^{geom}(\M_{2,[m]})$ avec le complété profini du mapping class group $\widehat{\Gamma}_{2,[m]}$, nous établissons l'action du groupe de Galois absolu sur les éléments de torsion \emph{profinis} de ce dernier. À la différence de l'action galoisienne établie dans la section précédente pour des éléments \emph{géométriques} et \emph{d'ordre quelconque}, ce résultat ne concerne que les éléments \emph{de $p$-torsion}.
	
	\bigskip
	
	Nous suivons l'approche employée dans \cite{Colg0, Colg1}, qui réduit les classes de conjugaisons profinies aux classes de conjugaison finies \emph{via} le choix d'une théorie cohomologique pour les groupes \emph{bons}, afin d'appliquer les résultats d'action galoisienne sur ces classes de conjugaison \emph{géométriques}.

	\begin{theo}[P. Symonds - Théorème 1.1]\label{theo:Sym}
		Soit $G$ un groupe discret, résiduellement fini, i.e. $i:G\hookrightarrow \widehat G$, virtuellement de type $FP$ et tel que $i$ induit un isomorphisme $H^q(\widehat G,\mathbb F_p)\simeq H^q(G,\mathbb F_p)$ en haut degré. Alors les classes de $p$-conjugaisons de $\widehat G$ et de $G$ sont en bijection.
	\end{theo}
	
	La propriété d'isomorphisme du théorème précédent est nommée \emph{propriété de bonté} -- Cf. J.P. Serre \cite{SERREL}. Ce résultat est l'analogue du théorème 1.1 de \cite{SYM07}, dont la preuve s'adapte immédiatement au cas d'un groupe bon selon son auteur. Identifiant $\pi_1^{orb}(\M_{2,[m]})$ à $\Gamma_{2,[m]}$, on obtient ainsi :
	
	\begin{prop}\label{prop:pTorsDisc}
		Soit $m\geqslant 0$. Soit $\gamma$ un élément de $p$-torsion de $\pi_1^{geom}(\M_{2,[m]})$. Alors $\gamma$ est conjugué à un élément de torsion géométrique -- i.e. de $\pi_1^{orb}(\M_{2,[m]})$
	\end{prop}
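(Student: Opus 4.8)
The plan is to deduce the statement from Symonds' theorem \ref{theo:Sym}, applied to the discrete group $G=\Gamma_{2,[m]}$, whose profinite completion $\widehat{\Gamma}_{2,[m]}$ is identified with $\pi_1^{geom}(\M_{2,[m]})$ and whose elements of finite order are, by definition, the geometric (i.e.\ orbifold) torsion of $\pi_1^{orb}(\M_{2,[m]})$. Three hypotheses must be checked. Residual finiteness of $\Gamma_{2,[m]}$ is classical. That $\Gamma_{2,[m]}$ is virtually of type $FP$ follows from Harer's finiteness results: a torsion-free level structure on $\M_{2,[m]}$ provides a finite-index subgroup with a finite-dimensional classifying space of finite type, so $\Gamma_{2,[m]}$ is virtually $FL$, a fortiori virtually $FP$. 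The essential input is the goodness property, i.e.\ that the natural map $H^q(\widehat{\Gamma}_{2,[m]},\FF_p)\to H^q(\Gamma_{2,[m]},\FF_p)$ is an isomorphism in top degree.

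First I would establish goodness of $\Gamma_{2,[m]}$. In genus $2$ the hyperelliptic involution $\iota$ is central, and $\Gamma_{2}/\langle\iota\rangle$ is isomorphic to the mapping class group of the sphere with six (unordered) marked points, which is commensurable to a braid group and hence good; since $\langle\iota\rangle\simeq\ZZ/2$ is finite, the extension $1\to\langle\iota\rangle\to\Gamma_2\to\Gamma_{0,[6]}\to 1$ and Serre's criterion (an extension of good groups with $FP_\infty$ kernel is good) show that $\Gamma_2$ is good. To pass to $m$ marked points one uses the Birman-type exact sequence whose kernel is a surface braid group of $\Sigma_2$; such surface braid groups are good, being iterated extensions of (good) surface and free groups, and they are of type $FP_\infty$. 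Applying Serre's criterion again to the resulting extension with quotient $\Gamma_2$ yields goodness of $\Gamma_{2,[m]}$, hence the required cohomological isomorphism in every degree, in particular in top degree.

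Granting the hypotheses, Theorem \ref{theo:Sym} gives a bijection between the $p$-conjugacy classes of $\widehat{\Gamma}_{2,[m]}$ and those of $\Gamma_{2,[m]}$. Concretely, given a $p$-torsion element $\gamma\in\pi_1^{geom}(\M_{2,[m]})=\widehat{\Gamma}_{2,[m]}$, its conjugacy class meets the image of $\Gamma_{2,[m]}\hookrightarrow\widehat{\Gamma}_{2,[m]}$: there exists a finite-order element $\gamma_0\in\Gamma_{2,[m]}=\pi_1^{orb}(\M_{2,[m]})$ with $\gamma$ conjugate to $\gamma_0$ in $\widehat{\Gamma}_{2,[m]}$, which is exactly the assertion. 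The main obstacle is the goodness input: making rigorous the reduction of goodness of $\Gamma_{2,[m]}$ to that of spherical braid groups and surface braid groups of $\Sigma_2$, together with the finiteness ($FP_\infty$) checks needed to invoke Serre's extension criterion; everything else is bookkeeping with standard properties of mapping class groups.
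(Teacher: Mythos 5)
Your proposal follows essentially the same route as the paper: apply Symonds' theorem to $\Gamma_{2,[m]}$, reduce goodness to $\Gamma_{2,0}$ via the Birman-type exact sequences for marked points, and obtain goodness of $\Gamma_{2,0}$ from the central hyperelliptic involution quotient $1\to \ZZ/2\ZZ\to\Gamma_{2,0}\to\Gamma_{0,[6]}\to 1$ together with the known goodness of $\Gamma_{0,[6]}$. Your explicit verification of the remaining hypotheses of Symonds' theorem (residual finiteness, virtually $FP$) is a welcome addition that the paper leaves implicit, but it does not change the argument.
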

	
	Suivant le théorème \ref{theo:Sym}, la preuve de ce résultat se réduit à établir la bonté de $\Gamma_{2,[m]}$ -- Cf. \cite{ODA}, ainsi que \cite{LOC11} pour une mise en contexte plus générale. La bonté étant préservée par extension celle-ci se réduit ainsi à celle de $\Gamma_{2,0}$ en considérant les suites exactes d'effacement et de permutation de points -- voir \cite{Colg1} proposition 4.3 pour un raisonnement détaillé en genre $1$ -- à partir de la bonté des groupes de surface. La bonté de $\Gamma_{2,0}$ découle quant à elle du quotient par l'involution de Birman
	\[
	1\to \ZZ/2\ZZ \to \Gamma_{2,0} \to \Gamma_{0,[6]} \to 1 
	\]
	et de la bonté de $\Gamma_{0,[6]}$ (voir \cite{Colg0} proposition 2.3).

	\begin{rem}\mbox{}
		\begin{enumerate}[label=\roman*),ref=\roman*)]
			\item Soit $\gamma\in Tor_{p^n}(\Gamma_{2,[m]})$ un élément de $p$-torsion profini et $\gamma_0\in Tor_{p^n}(\Gamma_{2,[m]})$ un élément géométrique fini de sa classe de conjugaison selon la proposition \ref{prop:pTorsDisc}. Suivant la terminologie du théorème \ref{theo:CycloGeom} pour la torsion \emph{géométrique}, on dira ainsi qu'un élément de $p$-torsion procyclique $\gamma$ est sans factorisation étale si c'est le cas de $\gamma_0$.
			\item Le théorème \ref{theo:Sym}, basé sur la théorie du T-foncteur Lanne, est une généralisation des ordre $p$ aux ordres $p^n$ des résultats analogues de \cite{Colg0,Colg1}, basés sur théorie cohomologique de J.-P. Serre.
		\end{enumerate}
	\end{rem}

	\bigskip
	
	Le troisième résultat fondamental de cet article découle alors de notre premier résultat principal.
	\begin{theo}
		Soit $\gamma$ un élément de torsion d'ordre premier de $\pi_1^{geom}(\M_{2,[m]})$. Alors l'action de $\sigma\in\Gq$ est donnée par $\chi(\sigma)$-conjugaison :
		\begin{equation*}
			\sigma(\gamma)=\rho\ \gamma^{\chi(\sigma)}\ \rho^{-1}\qquad \textrm{ pour un certain } \rho\in \pi_1^{geom}(\M_{2,[m]}).
		\end{equation*}
	\end{theo}
	
	\begin{proof}
		Soit $\gamma$ un tel élément de torsion profinie d'ordre premier, et $\sigma\in \Gq$. Alors suivant la proposition \ref{prop:pTorsDisc}, il est conjugué à un élément $\gamma_0$ fini de $\pi_1^{orb}(\M_{2,[m]})$. En tant qu'automorphisme de courbe de genre $2$, $\gamma_0$ est sans factorisation étale, et il découle du théorème \ref{theo:CycloGeom} que l'action est donnée par $\chi(\sigma)$-conjugaison. Ceci implique le résultat recherché pour l'action sur $\gamma$.
	\end{proof}

\bigskip

\bibliographystyle{cdraifplain}

\begin{thebibliography}{10}
	
	\bibitem{BERO07}
	{\scshape J.~Bertin {\normalfont \cdrandname}~M.~Romagny}, \emph{{Champs de
			{H}urwitz}}, vol. 125-126, M{\'e}moire de la SMF, 2011, arXiv:math/0701680v1.
	
	\bibitem{BRO90}
	{\scshape S.~A. Broughton}, {\og {The equisymmetric stratification of the
			moduli space and the {K}rull dimension of mapping class groups}\fg},
	\emph{Topology Appl.} \textbf{37} (1990), \cdrnumero 2, p.~101-113.
	
	\bibitem{CAT10}
	{\scshape F.~Catanese}, {\og {Irreducibility of the space of cyclic covers of
			algebraic curves of fixed numerical type and the irreducible components of
			{$Sing (\bar{\mathfrak M}_g)$}}\fg},  (2010), arXiv:1011.0316v1.
	
	\bibitem{Colg1}
	{\scshape B.~Collas}, {\og {Action of a {G}rothendieck-{T}eichm{{\"u}}ller
			group on torsion elements of full {T}eichm{{\"u}}ller modular groups of genus
			one}\fg}, \emph{International Journal of Number Theory} \textbf{84} (2012),
	\cdrnumero 3, p.~763-787.
	
	\bibitem{Colg0}
	\bysame , {\og {Action of the {G}rothendieck-{T}eichm{{\"u}}ller group on
			torsion elements of mapping class groups in genus zero}\fg}, \emph{Journal de
		Th{\'e}orie des Nombres de Bordeaux} \textbf{24} (2012), \cdrnumero 3,
	p.~605-622.
	
	\bibitem{COR87}
	{\scshape M.~Cornalba}, {\og {On the locus of curves with automorphisms}\fg},
	\emph{Ann. Mat. Pura Appl. (4)} \textbf{149} (1987), p.~135-151.
	
	\bibitem{CORER08}
	\bysame , {\og {Erratum: ``{O}n the locus of curves with automorphisms''
			[{A}nn. {M}at. {P}ura {A}ppl. (4) {\bf 149} (1987), 135--151]}\fg},
	\emph{Ann. Mat. Pura Appl. (4)} \textbf{187} (2008), \cdrnumero 1,
	p.~185-186.
	
	\bibitem{Cui08}
	{\scshape Y.~Cui}, {\og {Special loci in moduli of marked curves}\fg},
	\emph{Michigan Math. J} \textbf{56} (2008), p.~495-512.
	
	\bibitem{DEB90}
	{\scshape P.~D{\`e}bes {\normalfont \cdrandname}~J.-C. Douai}, {\og {Algebraic
			covers: field of moduli versus field of definition}\fg}, \emph{Annales Sci.
		E.N.S} \textbf{30} (1997).
	
	\bibitem{DEL69}
	{\scshape P.~Deligne {\normalfont \cdrandname}~D.~Mumford}, {\og {The
			irreducibility of the space of curves of given genus}\fg}, \emph{Publications
		Math{\'e}matiques de l'IHES} \textbf{36} (1969), \cdrnumero 1, p.~75-109.
	
	\bibitem{DRI90}
	{\scshape V.~G. Drinfel{$'$}d}, {\og {On quasitriangular quasi-{H}opf algebras
			and on a group that is closely connected with {${Gal}(\overline{\mathbb
					Q}/{\mathbb Q})$}}\fg}, \emph{Algebra i Analiz} \textbf{2} (1990), \cdrnumero
	4, p.~149-181.
	
	\bibitem{EKE95}
	{\scshape T.~Ekedahl}, {\og {Boundary behaviour of {H}urwitz schemes}\fg}, in
	\emph{{The moduli space of curves ({T}exel {I}sland, 1994)}}, {Progr. Math.},
	vol. 129, Birkh{\"a}user Boston, Boston, MA, 1995, p.~173-198.
	
	\bibitem{FRED03}
	{\scshape P.~Frediani {\normalfont \cdrandname}~F.~Neumann}, {\og {{\'E}tale
			Homotopy Types of Moduli Stacks of Algebraic Curves with Symmetries}\fg},
	\emph{K-Theory} (2003), \cdrnumero 30, p.~315-340.
	
	\bibitem{FRIED}
	{\scshape M.~Fried}, {\og {Fields of definition of function fields and
			{H}urwitz families---groups as {G}alois groups}\fg}, \emph{Comm. Algebra}
	\textbf{5} (1977), \cdrnumero 1, p.~17-82.
	
	\bibitem{Gonzalez92}
	{\scshape G.~Gonzalez-Diez {\normalfont \cdrandname}~W.~Harvey}, {\og {Moduli
			of Riemann surfaces with symmetry}\fg}, \emph{London Math. Soc. Lect. Note
		Ser} \textbf{173} (1992), p.~75-93.
	
	\bibitem{EGA4}
	{\scshape A.~Grothendieck}, {\og {{\'E}l{\'e}ments de g{\'e}om{\'e}trie
			alg{\'e}brique. {IV}. {\'E}tude locale des sch{\'e}mas et des morphismes de
			sch{\'e}mas {IV}}\fg}, \emph{Inst. Hautes {\'E}tudes Sci. Publ. Math.}
	(1967), \cdrnumero 32.
	
	\bibitem{GRO97}
	{\scshape A.~Grothendieck}, {\og {Esquisse d'un Programme}\fg}, in
	\emph{{Geometric Galois Actions I}} (P.~Lochak {\normalfont
		\cdrandname}~L.~Schneps, \cdredsname), vol. 242, 1997, p.~5-48.
	
	\bibitem{GRO71}
	{\scshape A.~Grothendieck {\normalfont \cdrandname}~J.~P. Murre}, \emph{{The
			Tame Fundamental Group of a Formal Neighbourhood of a Divisor with Normal
			Crossings on a Scheme}}, {Lecture Notes in Mathematics}, vol. 208,
	Springer-Verlag, New York, 1971.
	
	\bibitem{IHARA200}
	{\scshape Y.~Ihara}, {\og {On the embedding of {$Gal(\overline{\mathbb Q}/
				{\mathbb Q})$} into {$\widehat{GT}$}}\fg}, in \emph{{The Grothendieck Theory
			of Dessins d'Enfants}} (L.~Schneps {\normalfont \cdrandname}~P.~Lochak,
	\cdredsname), vol. 200, Cambridge University Press, 1994, p.~289-305.
	
	\bibitem{KERC}
	{\scshape S.~P. Kerckhoff}, {\og {The {N}ielsen realization problem}\fg},
	\emph{Ann. of Math. (2)} \textbf{117} (1983), \cdrnumero 2, p.~235-265.
	
	\bibitem{KNUII}
	{\scshape F.~F. Knudsen}, {\og {The projectivity of the moduli space of stable
			curves. {II}. {T}he stacks $\textrm{M}_{g,n}$}\fg}, \emph{Math. Scand.}
	\textbf{52} (1983), \cdrnumero 2, p.~161-199.
	
	\bibitem{LOC11}
	{\scshape P.~Lochak}, {\og {Results and conjectures in profinite
			Teichm{\"u}ller theory}\fg}, {Advanced studies in pure mathematics}, vol.~63,
	p.~74, {Advanced studies in pure mathematics}, 2012.
	
	\bibitem{LS06}
	{\scshape P.~Lochak {\normalfont \cdrandname}~L.~Schneps}, {\og {Open problems
			in Grothendieck-Teichm{\"u}ller theory}\fg}, p.~165-186, Amer. Math. Soc.,
	proc. symp \cdredname, 2006.
	
	\bibitem{MAU06}
	{\scshape S.~Maugeais}, {\og {Quelques d{\'e}formations sur les
			d{\'e}formations {\'e}quivariantes des courbes stables}\fg},
	\emph{Manuscripta Math.} (2006), \cdrnumero 120, p.~53-82.
	
	\bibitem{MUM67}
	{\scshape D.~Mumford}, {\og {Abelian quotients of the {T}eichm{\"u}ller modular
			group}\fg}, \emph{Journal d'Analyse Math{\'e}matique} \textbf{18} (1967),
	\cdrnumero 1, p.~227-244.
	
	\bibitem{NAKA94}
	{\scshape H.~Nakamura}, {\og {Galois rigidity of pure sphere braid groups and
			profinite calculus}\fg}, \emph{Journal Mathematical Sciences University
		Tokyo} \textbf{1} (1994).
	
	\bibitem{NAKA97}
	\bysame , {\og {Galois representations in the profinite {T}eichm{\"u}ller
			modular groups}\fg}, \emph{London Math. Soc. Lecture Note Series} (1997),
	p.~159-174.
	
	\bibitem{NAKA99}
	\bysame , {\og {Limits of {G}alois representations in fundamental groups along
			maximal degeneration of marked curves. {I}}\fg}, \emph{Amer. J. Math.}
	\textbf{121} (1999), \cdrnumero 2, p.~315-358.
	
	\bibitem{Nakamura2000}
	{\scshape H.~Nakamura {\normalfont \cdrandname}~L.~Schneps}, {\og {On a
			subgroup of the {G}rothendieck-{T}eichm{\"u}ller group acting on the tower of
			profinite {T}eichm{\"u}ller modular groups}\fg}, \emph{Inventiones
		mathematica} \textbf{141} (2000), \cdrnumero 141, p.~503-560.
	
	\bibitem{NOOHI04}
	{\scshape B.~Noohi}, {\og {Fundamental groups of algebraic stacks}\fg},
	\emph{Journal of the Institute of Mathematics of Jussieu} \textbf{3} (2004),
	\cdrnumero 01, p.~69-103.
	
	\bibitem{ODA}
	{\scshape T.~Oda}, {\og {Etale homotopy type of the moduli spaces of algebraic
			curves}\fg}, in \emph{{Geometric {G}alois actions, 1}}, {London Math. Soc.
		Lecture Note Ser.}, vol. 242, Cambridge Univ. Press, Cambridge, 1997,
	p.~85-95.
	
	\bibitem{Rom09}
	{\scshape M.~Romagny}, {\og {Composantes connexes et irr{\'e}ductibles en
			familles}\fg}, \emph{Manuscripta Math.} \textbf{136} (2011), \cdrnumero 1-2,
	p.~1-32.
	
	\bibitem{GALGROUPS}
	{\scshape L.~Schneps}, {\og {Special loci in moduli spaces of curves}\fg}, in
	\emph{{Galois groups and fundamental groups}}, vol.~41, Cambridge University
	Press, 2003.
	
	\bibitem{SERREL}
	{\scshape J.-P. Serre}, {\og {Two letters on non-abelian cohomology}\fg}, in
	\emph{{Geometric galois actions: around Grothendieck's esquisse d'un
			programme}}, Cambridge University Press, 1997.
	
	\bibitem{SYM07}
	{\scshape P.~Symonds}, {\og {On cohomology isomorphisms of groups}\fg},
	\emph{J. Algebra} \textbf{313} (2007), \cdrnumero 2, p.~802-810.
	
	\bibitem{TUF93}
	{\scshape S.~Tuff{\'e}ry}, {\og {D{\'e}formations de courbes avec action de
			groupe}\fg}, \emph{Forum Math.} \textbf{5} (1993), \cdrnumero 3, p.~243-259.
	
	\bibitem{ZOO2001}
	{\scshape V.~Zoonekynd}, {\og {La tour de
			{T}eichm{\"u}ller-{G}rothendieck}\fg}, \cdrphdthesisname, Institut de
	Math{\'e}matiques de Jussieu, 2001.
	
\end{thebibliography}
\def\bysame{\leavevmode ---------\thinspace}
\providecommand{\og}{<<~}\providecommand{\fg}{~>>}
\def\cdrandname{\&}
\providecommand\cdrnumero{no.~}
\providecommand{\cdredsname}{eds.}
\providecommand{\cdredname}{ed.}
\providecommand{\cdrchapname}{chap.}
\providecommand{\cdrmastersthesisname}{Memoir}
\providecommand{\cdrphdthesisname}{PhD Thesis}

\end{document}